\documentclass[12pt]{article}
\usepackage{amssymb,amsmath,amsfonts,amsthm,amsxtra,setspace}

\newcommand{\op}{\operatorname}

\newcommand{\mc}{\mathcal}
\newcommand{\bsigma}{\mathbf{\mathop{\pmb{\sum}}}}

\newcommand{\noopsort}[1]{}
\newtheorem{prob}{Problem}

\newtheorem{theorem}{Theorem}[section]
\newenvironment{ex}{\begin{trivlist} \item[] {\bf Example.}}{\hspace*{0pt}\end{trivlist}}
\newtheorem{cor}[theorem]{Corollary}
\newenvironment{rem}{\begin{trivlist} \item[] {\bf Remark.}}{\hspace*{0pt}\end{trivlist}}

\newsavebox{\Prfref}

\newsavebox{\prfref}




\newtheoremstyle{ref}
{\topsep}	
{\topsep}	
{\it}
{}
{}
{}
{ }
{\thmname{{\bfseries#1}}\thmnumber{ \textbf{#2\thmnote{\rm #3}\textbf .}}}

\theoremstyle{ref}
\newtheorem{lem}[theorem]{Lemma}
\newtheorem{thm}[theorem]{Theorem}
\newtheorem{prop}[theorem]{Proposition}

\newtheoremstyle{nnref}
{\topsep}	
{\topsep}	
{}
{}
{}
{}
{ }
{\thmname{\textbf{#1}\thmnote{\textrm{ #3}}\textbf{.}}}

\theoremstyle{nnref}

\newtheorem*{defn}{Definition}

\tolerance=200

\begin{document}
\title{Definable versions of Menger's conjecture}
\author{Franklin D. Tall{$^1$}}

\footnotetext[1]{Research supported by NSERC grant A-7354.\vspace*{2pt}}
\date{\today}
\maketitle

\begin{abstract}
	Menger's conjecture that Menger spaces are $\sigma$-compact is false; it is true for analytic subspaces of Polish spaces and undecidable for more complex definable subspaces of Polish spaces. For non-metrizable spaces, analytic Menger spaces are $\sigma$-compact, but Menger continuous images of co-analytic spaces need not be. The general co-analytic case is still open, but many special cases are undecidable, in particular, Menger co-analytic topological groups. We also prove that if there is a Michael space, then productively Lindel\"of \v{C}ech-complete spaces are $\sigma$-compact. We also give numerous characterizations of proper K-Lusin spaces. Our methods include the Axiom of Co-analytic Determinacy, non-metrizable descriptive set theory, and {Arhangel'ski\u\i}'s work on generalized metric spaces.
\end{abstract}

\renewcommand{\thefootnote}{}
\footnote
{\parbox[1.8em]{\linewidth}{$2000$ Math.\ Subj.\ Class.\ Primary 54A35,
54D45, 03E35, 03E75, 54H05; Secondary 03E15, 03E60.}\vspace*{5pt}}
\renewcommand{\thefootnote}{}
\footnote
{\parbox[1.8em]{\linewidth}{Key words and phrases: Co-analytic, Menger, $\sigma$-compact, productively Lindel\"of,
determinacy, Michael space, topological group, K-analytic, absolute Borel, K-Lusin.}}

\section{Menger co-analytic groups}
We shall assume all spaces are completely regular.  

\begin{defn}
A topological space is \textbf{analytic} if it is a continuous image of $\mathbb{P}$, the space of irrationals. A space is \textbf{Lusin} if it is an injective continuous image of $\mathbb{P}$. (This is the terminology of \cite{RJ}. This term is currently used for a different concept.) A space is \textbf{$K$-analytic} if it is a continuous image of a Lindel\"of \v{C}ech-complete space. A space is \textbf{$K$-Lusin} if it is an injective continuous image of a Lindel\"of \v{C}ech-complete space.
\end{defn}

\begin{defn}
A space is \textbf{co-analytic} if $\beta X\setminus X$ is analytic. In general, we call $\beta X\setminus X$ \textbf{the remainder} of $X$. $b X\setminus X$, for any compactification $b X$ of $X$, is called \textbf{a remainder} of $X$.
\end{defn}

\begin{defn}
A space is \textbf{Menger} if whenever $\{\mc{U}_n : n < \omega\}$ is a sequence of open covers, there exist finite $\mc{V}_n$, $n < \omega$, such that $\mc{V}_n \subseteq \mc{U}_n$ and $\bigcup \{\mc{V}_n :  n < \omega\}$ is a cover.
\end{defn}

Arhangel'ski\u\i ~\cite{Ar2} proved that Menger analytic spaces are $\sigma$-compact, generalizing Hurewicz's classic theorem that Menger completely metrizable spaces are $\sigma$-compact. Menger's conjecture was disproved in \cite{MF}, where Miller and Fremlin also showed it undecidable whether Menger co-analytic sets of reals are $\sigma$-compact. In \cite{TT} we proved that Menger \v{C}ech-complete spaces are $\sigma$-compact and obtained various sufficient conditions for Menger co-analytic topological spaces to be $\sigma$-compact.  We continue that study here.  In \cite{TT} we observed that $\mathbf{\Pi}_{1}^1$-determinacy -- which we also call \textbf{CD}: the \emph{Axiom of Co-analytic Determinacy} -- implies Menger co-analytic sets of reals are $\sigma$-compact.  Indeed, \textbf{PD} (\emph{the Axiom of Projective Determinacy}) implies Menger projective sets of reals are $\sigma$-compact \cite{T2}, \cite{TT}.  When one goes beyond co-analytic spaces in an attempt to generalize Arhangel'ski\u\i's theorem, one runs into ZFC counterexamples, but it is not clear whether there is a ZFC co-analytic counterexample.  Assuming $V=L$, there is a counterexample which is a subset of $\mathbb{R}$ \cite{MF}, \cite{TT}.  Here we prove:

\begin{thm}\label{thm1}
\textbf{CD} implies every Menger co-analytic topological group is $\sigma$-compact.
\end{thm}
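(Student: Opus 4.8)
The plan is to use the structure theory of topological groups with Lindel\"of remainders to reduce the statement to the case of co-analytic subspaces of Polish spaces, where it is already known that \textbf{CD} suffices (\cite{MF}, \cite{TT}).

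So let $G$ be a Menger co-analytic topological group. First, $G$ is Lindel\"of: applying the Menger property to a constant sequence of open covers produces a countable subcover of any given cover. Second, since $\beta G\setminus G$ is analytic it is a continuous image of $\mathbb{P}$ and hence Lindel\"of --- indeed $K$-analytic. Thus $G$ is a topological group one of whose remainders is Lindel\"of (and then, $\beta G\setminus G$ being the largest remainder, all of them are), and I would apply {Arhangel'ski\u\i}'s results on such groups: $G$ is either $\sigma$-compact --- in which case we are done --- or $G$ is feathered, i.e.\ a paracompact $p$-space. Assume the latter. Being a feathered topological group, $G$ has a compact subgroup $K$ for which $G/K$ is metrizable, and the quotient homomorphism $q\colon G\to G/K$ is perfect (a closed map with compact fibres). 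Since $G$ is Lindel\"of, $M:=G/K$ is a continuous image of a Lindel\"of space, hence separable and metrizable.

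It remains to show $M$ is $\sigma$-compact and to transfer this to $G$. As a continuous image of $G$, $M$ is Menger. It is also co-analytic: because $q$ is perfect, its Stone--\v{C}ech extension $\beta q\colon\beta G\to\beta M$ maps $\beta G\setminus G$ onto $\beta M\setminus M$, so $\beta M\setminus M$ is a continuous image of the analytic space $\beta G\setminus G$, hence analytic. Moreover, being a separable metrizable co-analytic space, $M$ is a $\mathbf{\Pi}^1_1$ subspace of a Polish space: letting $\widehat M$ be the metric completion of $M$, and noting that $M$ is dense in $\beta\widehat M$, the remainder $\beta\widehat M\setminus M$ is a continuous image of $\beta M\setminus M$, hence analytic, and as $\widehat M$ is a $G_\delta$ subset of $\beta\widehat M$ the set $\widehat M\setminus M=(\beta\widehat M\setminus M)\cap\widehat M$ is analytic in $\widehat M$; thus $M$ is co-analytic in the Polish space $\widehat M$. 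Since $M$ is Menger, \cite{TT} now gives that \textbf{CD} implies $M=\bigcup_{n<\omega}C_n$ with each $C_n$ compact. As $q$ is perfect, each $q^{-1}(C_n)$ is compact, whence $G=\bigcup_{n<\omega}q^{-1}(C_n)$ is $\sigma$-compact.

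I expect the crux to be the structural step: pinning down, within {Arhangel'ski\u\i}'s remainder theory, the assertion that a topological group with a Lindel\"of (equivalently here, analytic) remainder is $\sigma$-compact or feathered, together with the ensuing decomposition by a compact subgroup with metrizable quotient. Everything after that is a routine transfer along the perfect map $q$ and an appeal to the known effect of \textbf{CD} on co-analytic subsets of Polish spaces. A minor additional point is the identification, for separable metrizable spaces, of ``co-analytic'' in the sense used here (analytic $\beta$-remainder) with ``$\mathbf{\Pi}^1_1$ in the completion'', which is the form in which the \textbf{CD} result is usually phrased; this is the content of the completion computation sketched above, and is presumably already available in the literature. (If one instead has a clean dichotomy ``$\sigma$-compact or separable metrizable'' for such groups, take $K$ trivial and $q$ the identity, and the argument applies verbatim to $G$ itself.)
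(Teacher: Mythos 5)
Your proposal is correct and follows essentially the same route as the paper: Arhangel'ski\u\i's theorem that a topological group with Lindel\"of remainder is a perfect pre-image of a (here separable) metrizable space --- your feathered/compact-subgroup formulation of Lemma \ref{lem2} --- followed by transferring Menger and co-analyticity to the metrizable quotient along the perfect map, applying the \textbf{CD} result there, and pulling $\sigma$-compactness back through the perfect fibres. The only divergence is in the bookkeeping for the separable metrizable case: you verify directly that $M$ is $\mathbf{\Pi}^1_1$ in its completion, using the standard fact that the canonical map $\beta M\to bM$ carries $\beta M\setminus M$ onto $bM\setminus M$ (worth an explicit citation), whereas the paper instead passes to a perfect zero-dimensional pre-image inside the Cantor set (Lemma \ref{lem1}, via Lemmas \ref{lemA}--\ref{lemD}) before invoking \textbf{CD}.
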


\begin{rem}
\textbf{CD} follows from the existence of a measurable cardinal \cite{M}.
\end{rem}

We first slightly generalize the \textbf{CD} result quoted above.

\begin{lem}\label{lem1}
\textbf{CD} implies every separable metrizable Menger co-analytic space is $\sigma$-compact.
\end{lem}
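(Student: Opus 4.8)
The plan is to reduce the statement to the quoted \textbf{CD} result for sets of reals, by realizing $X$ as a co-analytic subset of a Polish space in the sense of classical descriptive set theory. The first step is to show that, for a separable metrizable space, being co-analytic in the sense of the paper's definition (the remainder $\beta X\setminus X$ is analytic) is equivalent to being a co-analytic, i.e.\ $\mathbf{\Pi}_{1}^1$, subset of some (equivalently, any) Polish space in which it embeds. For this I would embed $X$ in the Hilbert cube $[0,1]^{\omega}$, let $cX=\overline{X}$ be its closure there (a metrizable compactification), and invoke the standard theory of absolute Borel, analytic and co-analytic classes for separable metrizable spaces: the analyticity of the remainder does not depend on the chosen compactification, so $cX\setminus X$ is analytic in $cX$; hence $X$ is $\mathbf{\Pi}_{1}^1$ in the compact metrizable space $cX$, and a fortiori $\mathbf{\Pi}_{1}^1$ in the Polish space $[0,1]^{\omega}$.

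Granting this, the argument closes quickly. The proof of the quoted result goes through the $\mathbf{\Pi}_{1}^1$-Hurewicz dichotomy, which holds under \textbf{CD}: a co-analytic subset $A$ of a Polish space is either $\sigma$-compact or contains a subspace closed in $A$ and homeomorphic to $\mathbb{P}$; and this dichotomy does not care which Polish space is ambient. Apply it to $X$. If $X$ contained a closed copy of $\mathbb{P}$, that copy would be Menger, since closed subspaces of Menger spaces are Menger -- but $\mathbb{P}$ is not Menger. So the first alternative holds and $X$ is $\sigma$-compact.

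The real content -- and the expected main obstacle -- is the first step: confirming that the paper's remainder-based notion of co-analyticity, phrased via $\beta X\setminus X$, matches the descriptive-set-theoretic one for separable metrizable spaces, including the point that ``the remainder is analytic'' is insensitive to the choice of metrizable compactification. This is exactly the kind of absoluteness fact catalogued in \cite{RJ}; once it is available, everything downstream -- the Hurewicz dichotomy under \textbf{CD} and the observation that a Menger space cannot contain a closed copy of $\mathbb{P}$ -- is routine, which is why the lemma is only a slight strengthening of what is already known.
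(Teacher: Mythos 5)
Your argument is correct in substance, but it takes a genuinely different route from the paper's. The paper does not identify $X$ with a $\mathbf{\Pi}^1_1$ subset of a Polish space at all: it uses the folklore fact that every separable metrizable space is a perfect image of a zero-dimensional one, hence of a subspace $M$ of the Cantor set, notes that $M$ is Menger (perfect pre-images preserve Menger) and co-analytic (Lemma \ref{lemD}, proved just for this purpose), applies the quoted \textbf{CD} result for sets of reals verbatim to $M$, and pushes $\sigma$-compactness forward through the perfect map. Your route instead works inside the Hilbert cube and re-runs the engine behind that quoted result. Two remarks on what this costs and buys. First, of your claimed equivalence between the remainder-based notion and classical $\mathbf{\Pi}^1_1$-ness, you only need the easy direction: the remainder $cX\setminus X$ in the metrizable compactification is a continuous (indeed perfect) image of $\beta X\setminus X$ under the natural map, hence analytic, hence $\mathbf{\Sigma}^1_1$ in $cX$, so $X$ is $\mathbf{\Pi}^1_1$ there; the converse direction is more delicate and is not used, so it is better left unclaimed. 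Second, since a general separable metrizable $X$ need not embed in $\mathbb{R}$, you cannot invoke the quoted ``co-analytic sets of reals'' result as a black box; you need the Hurewicz dichotomy for $\mathbf{\Pi}^1_1$ subsets of an arbitrary Polish space under \textbf{CD}, and in the relative form (a closed-in-$X$ copy of $\mathbb{P}$ is what kills Menger; the ``contained in a $K_\sigma$'' form is vacuous in a compact ambient space). That dichotomy is indeed available (Kechris, Saint-Raymond, plus determinacy) and is exactly what underlies the quoted result, so your proof is sound, but it partially re-proves the cited theorem; the paper's perfect-map reduction is designed precisely so that the known result for sets of reals, and the already-established Lemma \ref{lemD}, can be cited without reopening the descriptive-set-theoretic machinery.
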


In order to prove this, we need some general facts about analytic spaces and perfect maps.

\begin{lem}\label{lemA}
	Metrizable perfect pre-images of analytic spaces are analytic.
\end{lem}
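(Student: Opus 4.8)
The plan is to pass through the class of $K$-analytic spaces, on which perfect maps are well behaved, and then invoke the classical fact that a metrizable $K$-analytic space is analytic.

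Let $f\colon X\to Y$ be perfect with $X$ metrizable and $Y$ analytic. Since a closed subspace of an analytic space is analytic, replacing $Y$ by $f[X]$ we may assume $f$ is onto. As $Y$ is a continuous image of the Lindel\"of space $\mathbb{P}$ it is Lindel\"of, and perfect preimages of Lindel\"of spaces are Lindel\"of, so $X$ is Lindel\"of; being also metrizable, $X$ is separable metrizable. (We note, for use in the alternative argument below, that $Y$ is even metrizable: it is a perfect image of a metrizable space, the fibres of $f$ being compact.)

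Next I would show that $X$ is $K$-analytic. Since $\mathbb{P}$ is itself Lindel\"of \v{C}ech-complete, $Y$ is $K$-analytic, so fix a Lindel\"of \v{C}ech-complete space $P$ and a continuous surjection $g\colon P\to Y$. Set
\[
Z=\{(p,x)\in P\times X:\ g(p)=f(x)\}.
\]
Because $\mathrm{id}_P\times f\colon P\times X\to P\times Y$ is perfect (a product of perfect maps), and $Z$ is precisely its preimage of the set $\{(p,g(p)):p\in P\}$ — which is closed, $Y$ being Hausdorff, and homeomorphic to $P$ — the induced map $Z\to P$ is perfect. Perfect preimages preserve both the Lindel\"of property and \v{C}ech-completeness, so $Z$ is Lindel\"of \v{C}ech-complete; and the coordinate projection $Z\to X$ is a continuous surjection (given $x$, choose $p$ with $g(p)=f(x)$). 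Hence $X$ is $K$-analytic. Since $X$ is also metrizable, $X$ is analytic: a metrizable $K$-analytic space is separable metrizable and is an analytic subspace of a Polish space, hence a continuous image of $\mathbb{P}$; see \cite{RJ}.

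I expect the only genuine content to be the claim that $Z\to P$ is perfect, i.e.\ that perfectness of $f$ is inherited by the pullback; this is exactly the point where the hypothesis ``perfect'' rather than merely ``continuous'' is used, and the lemma fails for arbitrary continuous surjections. If one prefers to avoid the $K$-analytic machinery, here is an equivalent hands-on argument: with $X$ known to be separable metrizable, embed $X$ in the Hilbert cube $Q$ via $e$, fix a metrizable compactification $\widehat{Y}$ of $Y$ (available since $Y$ is, by the above, separable metrizable), and let $\widehat{X}$ be the closure in $\widehat{Y}\times Q$ of the image of the embedding $x\mapsto(f(x),e(x))$. Writing $\widehat{f}\colon\widehat{X}\to\widehat{Y}$ for the first projection, one checks, using compactness of the fibres of $f$ together with the tube lemma for closed maps, that $\widehat{f}^{\,-1}(Y)=X$; since $\widehat f$ is continuous between compact metrizable spaces and $Y$ is analytic in $\widehat{Y}$, it follows that $X=\widehat f^{\,-1}(Y)$ is analytic.
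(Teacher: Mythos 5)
Your argument is correct, and at the top level it follows the same route as the paper: show the metrizable perfect pre-image is $K$-analytic, then invoke the fact that metrizable $K$-analytic spaces are analytic. The difference is that the paper simply cites Rogers--Jayne for both ingredients (5.8.9: perfect pre-images of metrizable analytic spaces are $K$-analytic; 5.5.1: $K$-analytic metrizable spaces are analytic), whereas you prove the first ingredient from scratch via the pullback $Z\subseteq P\times X$ along a Lindel\"of \v{C}ech-complete parametrization of $Y$, using that products and restrictions to full pre-images preserve perfectness (the latter is exactly the paper's Lemma \ref{lemB}). Your parenthetical observation that $Y$ is metrizable, being a perfect image of a metrizable space, is worth keeping even in the pullback version: the Rogers--Jayne result the paper quotes is stated for \emph{metrizable} analytic spaces, and this Morita--Hanai--Stone step is what makes that citation applicable, so your write-up makes explicit a point the paper leaves tacit. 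Your second, ``hands-on'' argument (extend $f$ over compactifications via the graph-closure trick, check $\widehat f^{\,-1}(Y)=X$ as in the paper's Lemma \ref{lemC}, and use that continuous pre-images of analytic sets in Polish spaces are analytic) is also sound and is essentially the classical descriptive-set-theoretic proof, closer in spirit to the paper's proof of Lemma \ref{lemD}; it buys independence from the $K$-analytic machinery at the cost of needing $Y$ separable metrizable, which you correctly secured first.
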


\begin{proof}
	Rogers and Jayne \cite[5.8.9]{RJ} prove that perfect pre-images of metrizable analytic spaces are $K$-analytic, and that $K$-analytic metrizable spaces are analytic \cite[5.5.1]{RJ}.
\end{proof}

\begin{lem}[{ \cite[3.7.6]{E}}]\label{lemB}
	If $f:X\to Y$ is perfect, then for any $B\subseteq Y$, $f_B:f^{-1}(B)\to B$ is perfect.
\end{lem}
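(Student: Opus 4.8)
The plan is to verify directly the three defining conditions of a perfect map for $f_B$: continuity, compactness of fibres, and closedness. Continuity is immediate, since $f_B$ is a restriction (to both domain and codomain) of the continuous map $f$. For $b\in B$ the fibre $f_B^{-1}(b)$ equals $f^{-1}(b)$, which is compact by hypothesis; and if one's convention for ``perfect'' also demands that the domain be Hausdorff, this is inherited by the subspace $f^{-1}(B)$ from $X$ (likewise $f_B$ is onto $B$ whenever $f$ is onto $Y$). So the only real content is closedness of $f_B$.

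For that I would invoke the standard reformulation of closedness: a continuous map $g\colon Z\to W$ is closed iff for every $w\in W$ and every open $U\subseteq Z$ with $g^{-1}(w)\subseteq U$ there is an open $V\ni w$ in $W$ with $g^{-1}(V)\subseteq U$ --- indeed $V=W\setminus g(Z\setminus U)$ works. Now let $C$ be closed in $f^{-1}(B)$ and let $b\in B\setminus f_B(C)$. Since $C$ is closed in the subspace $f^{-1}(B)$, choose an open $U\subseteq X$ with $U\cap f^{-1}(B)=f^{-1}(B)\setminus C$. Because $b\in B$ and $b\notin f(C)$, we have $f^{-1}(b)=f^{-1}(b)\cap f^{-1}(B)\subseteq f^{-1}(B)\setminus C\subseteq U$. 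Applying the reformulation to the closed map $f$ itself gives an open $V\ni b$ in $Y$ with $f^{-1}(V)\subseteq U$. Then $V\cap B$ is an open neighbourhood of $b$ in $B$, and it misses $f_B(C)$: if some $x\in C$ had $f(x)\in V\cap B$, then $x\in f^{-1}(V)\subseteq U$ and $x\in f^{-1}(B)$, so $x\in U\cap f^{-1}(B)=f^{-1}(B)\setminus C$, contradicting $x\in C$. Hence $f_B(C)$ is closed in $B$, and $f_B$ is perfect.

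I do not expect a genuine obstacle here; the points needing a little care are purely bookkeeping --- keeping track of which topology ($X$ versus $f^{-1}(B)$, and $Y$ versus $B$) each open set lives in --- together with the observation that closedness of $f_B$ uses only that $f$ is a closed map, the compactness of the fibres being needed solely to check the fibre condition for $f_B$. (One could instead derive closedness from the characterization of perfect maps through preservation of closedness under taking products with an arbitrary space, but the direct neighbourhood argument above is shorter and more transparent.)
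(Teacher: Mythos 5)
Your proof is correct. The paper gives no argument of its own for this lemma --- it is quoted directly from Engelking [3.7.6] --- so there is nothing to compare against; your verification (continuity and fibre-compactness are immediate, closedness via the standard neighbourhood reformulation of closed maps) is the standard argument and is carried out accurately, including the correct observation that only closedness of $f$ is used for closedness of $f_B$. One small simplification you might note: writing $C=\tilde C\cap f^{-1}(B)$ with $\tilde C$ closed in $X$, any $x$ with $f(x)\in B$ lies in $f^{-1}(B)$, so $f_B(C)=f(\tilde C)\cap B$, which is closed in $B$ directly because $f$ is closed; this avoids the neighbourhood bookkeeping altogether.
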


\begin{lem}[{ \cite[5.2.3]{RJ}}]\label{lemC}
	If $f$ is a continuous map of a compact Hausdorff $X$ onto a Hausdorff space $Y$ and the restriction of $f$ to a dense subspace $E$ of $X$ is perfect, then $f^{-1}\circ f(E)=E$.
\end{lem}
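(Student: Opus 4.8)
The plan is to prove the two inclusions of $f^{-1}(f(E))=E$ separately. One direction, $E\subseteq f^{-1}(f(E))$, is immediate, so the whole content lies in showing $f^{-1}(f(E))\subseteq E$, which I would do by contradiction. Suppose $x\in f^{-1}(f(E))$ but $x\notin E$. Then $f(x)=f(e_0)$ for some $e_0\in E$, so the fibre $S:=(f|_E)^{-1}(f(x))=\{e\in E:f(e)=f(x)\}$ is nonempty; since $f|_E$ is perfect, $S$ is compact, hence closed in the Hausdorff space $X$. Because $x\notin E\supseteq S$, the point $x$ and the closed set $S$ are disjoint, which sets up the separation argument.

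The key steps, in order, are then the following. First, use regularity of $X$ (it is compact Hausdorff) to pick an open $U\ni x$ with $\overline{U}\cap S=\emptyset$. Second, invoke density of $E$: every neighbourhood of $x$ meets the nonempty open set $U$ and hence meets $U\cap E$, so $x\in\overline{U\cap E}$, and therefore by continuity $f(x)\in\overline{f(U\cap E)}$ (closure in $Y$). Third, use that $f|_E$ is perfect, in particular \emph{closed}: the set $\overline{U}\cap E$ is closed in $E$, so $f(\overline{U}\cap E)$ is closed in $f(E)$. Since $f(U\cap E)\subseteq f(\overline{U}\cap E)$ and $f(x)\in f(E)$, taking closures relative to $f(E)$ gives $f(x)\in f(\overline{U}\cap E)$. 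Hence there is $e\in\overline{U}\cap E$ with $f(e)=f(x)$, i.e. $e\in S$; but $e\in\overline{U}$, contradicting $\overline{U}\cap S=\emptyset$. This contradiction forces $x\in E$.

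The step I expect to be the main obstacle is the passage from ``$f(x)\in\overline{f(U\cap E)}$'' to ``$f(x)\in f(\overline{U}\cap E)$'': the first is a statement about closure in $Y$, the second about a set closed in $f(E)$, and bridging them requires the observation that the relative closure in $f(E)$ is the trace of the ambient closure, together with the fact that $f(x)\in f(E)$ — which is exactly the hypothesis $x\in f^{-1}(f(E))$. It is worth noting that what really drives the argument is the density of $E$ together with the closedness (not merely compact-fibredness) of $f|_E$; compactness of $X$ is used only through its regularity, and $Y$ being Hausdorff only to keep fibres closed. I would write up the proof in essentially the three stages described above.
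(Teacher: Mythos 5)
Your proof is correct. Note that the paper does not prove this lemma at all---it is quoted directly from Rogers and Jayne [5.2.3]---so there is no in-paper argument to compare with; your write-up serves as a self-contained verification. The structure is sound: the inclusion $E\subseteq f^{-1}(f(E))$ is trivial; for the converse you separate $x$ from the fibre $S=(f|_E)^{-1}(f(x))$, which is compact by perfectness and hence closed in the Hausdorff space $X$, using regularity of $X$ to get $U\ni x$ with $\overline{U}\cap S=\emptyset$; density of $E$ gives $x\in\overline{U\cap E}$, continuity gives $f(x)\in\overline{f(U\cap E)}$, and closedness of $f|_E$ applied to $\overline{U}\cap E$ (closed in $E$) lets you pass, via the relative closure in $f(E)$ and the fact that $f(x)\in f(E)$, to a point $e\in\overline{U}\cap S$, a contradiction. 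The one step you flagged as delicate is handled correctly: the relative closure of $f(U\cap E)$ in $f(E)$ is $\overline{f(U\cap E)}\cap f(E)$, which contains $f(x)$ and is contained in the relatively closed set $f(\overline{U}\cap E)$. Two small remarks on your closing comments: it is the Hausdorffness of $X$ (plus compactness of the fibre coming from perfectness), not of $Y$, that makes $S$ closed in $X$; and since you only use that $f|_E$ is closed onto its image $f(E)$, your argument is insensitive to whether ``perfect'' is taken relative to $f(E)$ or to $Y$, which is a mild strengthening of the cited statement.
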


\begin{lem}\label{lemD}
	Metrizable perfect pre-images of co-analytic spaces are co-analytic.
\end{lem}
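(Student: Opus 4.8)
The plan is to lift $f$ to a perfect map of Stone--\v{C}ech compactifications and then read the conclusion off the remainders, using Lemmas~B and~C. We may assume $f$ is onto (otherwise replace $Y$ by the closed subspace $f(X)$).

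Let $\beta f\colon\beta X\to\beta Y$ be the Stone extension of $f$. Since $f(X)=Y$ is dense in $\beta Y$ while $\beta f(\beta X)$ is compact, $\beta f$ is a continuous surjection of compact Hausdorff spaces, hence perfect. Its restriction to the dense subspace $X$ of $\beta X$ is the perfect surjection $f\colon X\to Y$, with $Y$ dense in $\beta Y$, so Lemma~C applies and gives $(\beta f)^{-1}(Y)=X$. Consequently $(\beta f)^{-1}(\beta Y\setminus Y)=\beta X\setminus X$, and Lemma~B (applied to the perfect map $\beta f$ and the subset $\beta Y\setminus Y$) shows that $\beta f$ restricts to a perfect map of $\beta X\setminus X$ onto $\beta Y\setminus Y$; it is onto because the $\beta f$-fibres over points of $\beta Y\setminus Y$ lie in $\beta X\setminus X$.

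Thus $\beta X\setminus X$ is a perfect pre-image of $\beta Y\setminus Y$, which is analytic because $Y$ is co-analytic, and one expects, exactly as in Lemma~A, that $\beta X\setminus X$ is then analytic and hence that $X$ is co-analytic. Making this last implication precise is the point where I expect the real work to lie: unlike in Lemma~A the domain $\beta X\setminus X$ of the perfect map need not be metrizable, so ``$K$-analytic metrizable spaces are analytic'' is not directly available. One should instead argue that a perfect pre-image of an analytic (indeed of any $K$-analytic) space is $K$-analytic, and then recover analyticity of the remainder using the metrizability of $X$; when $X$ is separable metrizable one may sidestep this by running the same argument with a metrizable compactification $bX$ of $X$ in place of $\beta X$ --- say the closure of the graph of $f$ in $bX_0\times bY$ for metrizable compactifications $bX_0\supseteq X$ and $bY\supseteq Y$ --- so that $bX\setminus X$ is metrizable and Lemma~A applies verbatim. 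By contrast the identity $(\beta f)^{-1}(Y)=X$ that drives the whole argument is exactly what Lemma~C supplies, once one checks its (routine) hypotheses for $\beta f$.
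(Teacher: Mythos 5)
Your construction is, step for step, the paper's own proof: extend the perfect surjection to the Stone--\v{C}ech compactifications, use Lemma \ref{lemC} to get $(\beta f)^{-1}(Y)=X$, and use Lemma \ref{lemB} to see that $\beta f$ restricts to a perfect map of $\beta X\setminus X$ onto the analytic remainder $\beta Y\setminus Y$ (the paper writes $M$, $X$, $P$ for your $X$, $Y$, $\beta f$). Where you and the paper part company is the very last step. The paper concludes in one line that the remainder of the metrizable space ``is analytic by Lemma \ref{lemA}, so $M$ is co-analytic''; you stop there, noting that Lemma \ref{lemA} as stated wants the pre-image to be metrizable while $\beta X\setminus X$ need not be, and you leave the decisive implication at the level of ``one expects'', with two repair strategies sketched but not carried out. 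So, as submitted, your text has a genuine gap: the one implication that actually yields the lemma is missing. Your scruple about Lemma \ref{lemA} is fair --- the paper is really invoking only the first half of that lemma's proof, namely that perfect pre-images of analytic (indeed K-analytic) spaces are K-analytic --- but a scruple is not a proof, and you needed to supply the missing citation rather than defer it.

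What has to be added is short. From the perfect remainder map, $\beta X\setminus X$ is K-analytic (perfect pre-images of K-analytic spaces are K-analytic); analyticity is then recovered exactly where a countable network is available, since K-analytic spaces with a countable network are analytic. In every use the paper makes of this lemma (Lemma \ref{lem1}, Theorem \ref{thm1}) the metrizable pre-image is separable, and then the remainder in a metrizable compactification --- which is a continuous, even perfect, image of $\beta X\setminus X$ by the same Lemma \ref{lemC} argument --- has a countable network and is therefore analytic. Note that this, like your graph-closure sidestep, certifies that \emph{some} remainder of the pre-image is analytic rather than that $\beta X\setminus X$ itself is; that is evidently the sense in which the paper uses co-analyticity elsewhere (e.g.\ the Alexandrov duplicate is declared co-analytic because it has a countable remainder), and it is what the downstream application of \textbf{CD} to a subspace of the Cantor set actually requires. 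So your route is the paper's route; the difference is only that you flagged, but did not close, the final step, and closing it requires the two cited facts above rather than any new argument.
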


\begin{proof}
	Let $M$ be a metrizable perfect pre-image of a co-analytic $X$. Let $p$ be the perfect map. Extend $p$ to $P$ mapping $\beta M$ onto $\beta X$. Then by Lemma \ref{lemC}, $P^{-1}\circ P(M)=M$, i.e. $P^{-1}(X)=M$. Then $P(\beta M\setminus M)=\beta X\setminus X$, since $P$ is onto and points in $M$ map into $X$. By Lemma \ref{lemB}, $P|P^{-1}(\beta X\setminus X)$ is perfect. But then $\beta M\setminus M$ is analytic by Lemma \ref{lemA}, so $M$ is co-analytic.
\end{proof}

\begin{proof}[Proof of Lemma \ref{lem1}]
	Let $X$ be separable metrizable Menger co-analytic. It is folklore (see e.g. \cite{K}) that every separable metrizable space $X$ is a perfect image of a $0$-dimensional one, and hence of a subspace $M$ of the Cantor space $\mathbb{K}\subseteq\mathbb{R}$. Then $M$ is Menger co-analytic, so by \textbf{CD} is $\sigma$-compact. But then so is $X$.
\end{proof}

\begin{lem}[{ \cite{Ar3}}]\label{lem2}
A topological group with Lindel\"of remainder is a perfect pre-image of a metrizable space.
\end{lem}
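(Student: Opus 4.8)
The plan is to route the statement through Arhangel'ski\u\i's structure theory of feathered topological groups. Recall two of his classical facts: (i) a Tychonoff space is a perfect pre-image of a metrizable space if and only if it is a paracompact $p$-space; and (ii) a topological group is a paracompact $p$-space if and only if it is \emph{feathered}, that is, contains a nonempty compact subset with a countable base of neighbourhoods in the group --- in particular, feathered groups are paracompact. Granting (i) and (ii), it suffices to show that a topological group with a Lindel\"of remainder contains a nonempty compact subset of countable character.

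For that I would invoke the classical theorem of Henriksen and Isbell: a Tychonoff space $X$ has a Lindel\"of remainder --- equivalently, $\beta X \setminus X$ is Lindel\"of, equivalently every remainder of $X$ is Lindel\"of --- if and only if $X$ is of \emph{countable type}, meaning that every compact subset of $X$ is contained in a compact subset of $X$ having a countable base of neighbourhoods. Only the forward implication is needed. Applying it to the one-point set $\{e\}$ in the group $G$ yields a nonempty compact subset of $G$ with a countable neighbourhood base, so $G$ is feathered; by (i) and (ii), $G$ is then a paracompact $p$-space and hence a perfect pre-image of a metrizable space.

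The only substantive ingredients are external, and they are precisely what the attribution to \cite{Ar3} bundles together: the Henriksen--Isbell characterization of a Lindel\"of remainder via countable type, and Arhangel'ski\u\i's theorem that feathered groups are paracompact $p$-spaces (this is what upgrades the mere existence of a compact set of countable character into the perfect-pre-image conclusion). The reduction itself is routine; the one point worth stating explicitly in the write-up is the trivial observation that ``countable type'' applied to a singleton already delivers the defining property of a feathered group, so the general-topological input about remainders feeds directly into the group-theoretic conclusion, with no appeal to homogeneity required for the implication we want.
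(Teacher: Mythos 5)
Your argument is correct: the paper states this lemma without proof, simply citing Arhangel'ski\u\i, and what you give is essentially the standard argument behind that citation --- Henriksen--Isbell converts the Lindel\"of remainder into countable type, applying countable type to the singleton $\{e\}$ yields a nonempty compact set of countable character, so the group is feathered, and Arhangel'ski\u\i's theorem that feathered groups are paracompact $p$-spaces (equivalently, perfect pre-images of metrizable spaces) gives the conclusion. No gaps; the complete regularity needed for Henriksen--Isbell is automatic for topological groups and is the paper's standing assumption anyway.
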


Since analytic spaces are Lindel\"of, a co-analytic group is a perfect pre-image of a metrizable space.  Since Menger spaces are Lindel\"of, a Menger co-analytic topological group $G$ is a perfect pre-image of a separable metrizable space $M$. In \cite{TT}, we proved \textit{perfect images of co-analytic spaces are co-analytic}, so $M$ is co-analytic and Menger and therefore $\sigma$-compact by \textbf{CD} and Lemma \ref{lem1}. Then $G$ is $\sigma$-compact as well.\qed

After hearing about Theorem \ref{thm1}, S. Tokg\"oz \cite{To} proved:

\begin{prop}\label{prop1}
$V=L$ implies there is a Menger co-analytic group which is not $\sigma$-compact.
\end{prop}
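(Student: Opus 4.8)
The plan is to exploit the classical $V=L$ machinery that already produces a co-analytic ($\mathbf{\Pi}^1_1$) non-$\sigma$-compact Menger set of reals — the Miller–Fremlin example cited in \cite{MF} and \cite{TT} — and to transplant it into a topological group. The obvious first move is to take the Miller–Fremlin set $X\subseteq\mathbb{R}$, which under $V=L$ is co-analytic, Menger, and not $\sigma$-compact, and try to manufacture a group structure on (something built from) $X$ without destroying any of these three properties.

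First I would try the simplest construction: view $\mathbb{R}$ (or $2^\omega$, or $\mathbb{Z}^\omega$) as a topological group and ask whether one can choose the Miller–Fremlin-type construction to land on a \emph{subgroup}. The Miller–Fremlin example is built by a $\Sigma^1_2$ (hence, under $V=L$, $\mathbf{\Pi}^1_1$-definable via the $\Sigma^1_2$-good wellordering of the reals) transfinite recursion of length $\omega_1$ that diagonalizes against the candidate witnesses to $\sigma$-compactness while meeting each open cover; the point is that such recursions are flexible enough to also arrange closure under the group operations of, say, $\mathbb{Z}^\omega$, since at each step one only adds countably many new points (the subgroup generated by what has been put in so far) and this does not interfere with the Menger diagonalization or with keeping the set $\mathbf{\Sigma}^1_2$ in the codes. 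Concretely: one runs the recursion inside a Polish group $\mathbb{G}$, at stage $\alpha$ closing off under the group operations, and checks that (i) the resulting subgroup $G$ is still Menger — Menger is a property that the recursion actively forces by hitting every $\omega_1$-indexed open cover, and closing under countable group operations only enlarges by $\aleph_0$ at each stage so the bookkeeping still works; (ii) $G$ is not $\sigma$-compact — again forced by diagonalizing against all $F_\sigma$ sets with the wellorder; and (iii) $G$ remains co-analytic — this is where one uses that $V=L$ gives a $\Sigma^1_2$-good wellordering, so the set defined by the recursion is $\mathbf{\Sigma}^1_2$, and a standard argument (the set is $\mathbf{\Sigma}^1_2$ and, being built to be "thin" — containing no perfect set — or by a direct complexity computation) upgrades it to $\mathbf{\Pi}^1_1$; alternatively one invokes the uniformization/coding facts that make the Miller–Fremlin set co-analytic in the first place and observes they are unaffected by the extra closure requirement.

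The main obstacle I expect is simultaneously controlling the \emph{descriptive complexity} and the \emph{group closure}: closing a $\mathbf{\Pi}^1_1$ set under a group operation does not in general keep it $\mathbf{\Pi}^1_1$, so one cannot simply take the subgroup generated by the Miller–Fremlin set after the fact. The recursion must be set up so that group-closure happens \emph{inside} the definable construction, stage by stage, and one must verify that the final set is still co-analytic — presumably by the same mechanism that makes the original example co-analytic, namely that under $V=L$ a set built by a $\Sigma^1_2$-definable length-$\omega_1$ recursion selecting reals along the canonical wellorder is automatically co-analytic (this is the Erd\H{o}s–Kunen–Mauldin / Miller style of argument). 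A secondary point to check is that the group topology on $G$ (as a subgroup of $\mathbb{G}$) is the one making the Menger property meaningful — but since subgroups carry the subspace topology and Menger is inherited appropriately by the construction, this is routine. Once $G$ is in hand, it is co-analytic as a space (its remainder in any compactification being analytic), Menger, a topological group, and not $\sigma$-compact, which is exactly the statement; and of course this stands in contrast to Theorem \ref{thm1}, showing the use of \textbf{CD} there is essential. For the details I would refer to Tokg\"oz \cite{To}.
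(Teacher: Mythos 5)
The paper offers no argument for Proposition \ref{prop1} at all: it is Tokg\"oz's theorem and is simply cited from \cite{To}, so your closing deferral to \cite{To} matches the paper's treatment. Judged as a proof sketch, however, your plan has two genuine gaps, located exactly where the real work lies. First, the Menger property of the resulting group is not obtained the way you describe. You cannot secure Menger by ``hitting every $\omega_1$-indexed open cover'' during the recursion, since the finite selections chosen at stage $\alpha$ would also have to cover points added at later stages; and the remark that closing under the group operations ``only enlarges by $\aleph_0$ at each stage so the bookkeeping still works'' ignores that Menger is not preserved by finite products, hence not by algebraic closure: the subgroup generated by a set is a countable union of continuous images of finite powers of that set, and those powers of a Menger set need not be Menger. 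The known constructions (Miller--Fremlin \cite{MF} for sets, Tokg\"oz \cite{To} for groups) instead arrange a combinatorial structure on the constructed set (concentration/scale-type unboundedness) which implies Menger --- indeed Hurewicz, as the paper's final remark about \cite{To} indicates --- for the generated group, in the style of the results that groups generated by suitable $\mathfrak{b}$-scale-like sets are Hurewicz; the same unboundedness is what gives non-$\sigma$-compactness, rather than a separate diagonalization against all $F_\sigma$ sets.

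Second, the complexity upgrade is misstated: a thin (perfect-set-free) $\mathbf{\Sigma}^1_2$ set is not automatically $\mathbf{\Pi}^1_1$. Co-analyticity of the final group comes from the Erd\H{o}s--Kunen--Mauldin/Miller coding technique you allude to, in which the length-$\omega_1$ recursion under the $\Sigma^1_2$-good wellordering is arranged so that membership is witnessed in a $\mathbf{\Pi}^1_1$ fashion (each element coding the countable stage of the construction that produced it), and one must check that this coding is compatible with the additional requirement of closing under the group operations. Verifying these two points --- that the group generated is still Menger/Hurewicz and that the coding survives group closure --- is precisely the content of \cite{To}; as written, your sketch asserts them rather than proves them.
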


\section{Productively Lindel\"of co-analytic spaces}
\begin{defn}
A space $X$ is \textbf{productively Lindel\"of} if for every Lindel\"of space $Y$, $X \times Y$ is Lindel\"of.
\end{defn}

We have extensively studied productively Lindel\"of spaces \cite{AAJT, AT, BT, DTZ, T2, T1, TTs}, as have other authors.  Since productively Lindel\"of spaces consistently are Menger \cite{T1, AAJT, T2, RZ} it is natural to ask:

\begin{prob}\label{prob1}
Are productively Lindel\"of co-analytic spaces $\sigma$-compact?
\end{prob}

\begin{defn}
A \textbf{Michael space} is a Lindel\"of space whose product with the space $\mathbb{P}$ of irrationals is not Lindel\"of.
\end{defn}

It is consistent that there is a Michael space, but it is not known whether there is one from ZFC.  If there is no Michael space, then the space $\mathbb{P}$ of irrationals is productively Lindel\"of, co-analytic, nowhere locally compact, but not $\sigma$-compact.  We shall prove:

\begin{thm}\label{thm4}
CH implies productively Lindel\"of co-analytic spaces which are nowhere locally compact are $\sigma$-compact.
\end{thm}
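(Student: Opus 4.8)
The plan is to exploit the interplay between "productively Lindelöf," "co-analytic," and "nowhere locally compact" to force $\sigma$-compactness under CH. First I would recall the known consequences of productive Lindelöfness in this setting: under CH, productively Lindelöf spaces have a strong covering property — in fact (by the work cited as \cite{AAJT, T1, T2}) they are Hurewicz, and a fortiori Menger. So a productively Lindelöf co-analytic space $X$ is already Menger and co-analytic. If $X$ were additionally separable metrizable, Lemma \ref{lem1} would hand us $\sigma$-compactness outright (via \textbf{CD}); but we only have CH, not \textbf{CD}, and $X$ need not be metrizable, so the argument must be routed differently.

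The key structural step I would use is that a co-analytic space is Lindelöf and hence, combined with its remainder being analytic (thus Lindelöf), its remainder $\beta X \setminus X$ is Lindelöf \v{C}ech-complete — so $X$ is \v{C}ech-complete on a co-Lindelöf part. More precisely, I would try to show that a productively Lindelöf space whose remainder is Lindelöf is itself \v{C}ech-complete, or at least has a dense \v{C}ech-complete subspace; here the hypothesis "nowhere locally compact" is what rules out the trivial obstruction and the hypothesis that the remainder is analytic (hence $K$-analytic, hence concentrated in a controlled way) is what we leverage. The point of "nowhere locally compact" is that it pins down the remainder: $X$ is dense in $\beta X$ and $\beta X \setminus X$ is dense in $\beta X$ as well, so the analytic remainder is a dense analytic — in fact, under CH and productive Lindelöfness one expects the remainder to be not just analytic but to meet every nonempty $G_\delta$ of $\beta X$, which forces $X$ to avoid containing a nonempty compact $G_\delta$, i.e., to be \v{C}ech-complete only if... — so instead the conclusion must come from a Michael-space-type argument: if such an $X$ were not $\sigma$-compact, one builds from it, under CH, a Michael space, contradicting a ZFC non-existence — but no such ZFC theorem exists, so the real mechanism must be that productive Lindelöfness of $X$ combined with $X$ non-$\sigma$-compact and nowhere locally compact lets us embed a copy of $\mathbb{P}$ as a closed subspace of $X$, and then $X \times (\text{Michael space built under CH})$ is not Lindelöf — contradiction with productive Lindelöfness.

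So the argument I would actually run is: (1) a nowhere-locally-compact co-analytic space that is not $\sigma$-compact contains a closed copy of $\mathbb{P}$ — this is the analogue of Hurewicz's dichotomy, and for co-analytic (rather than analytic) spaces it should follow from the fact that the remainder is analytic plus a Baire-category / Cantor-scheme construction inside $X$; (2) under CH there is a Michael space $Y$ (a classical ZFC-under-CH construction, e.g. Michael's original or the one via a concentrated set / $\mathfrak{b} = \omega_1$); (3) closed subspaces of productively Lindelöf spaces are productively Lindelöf, so $\mathbb{P}$ would be productively Lindelöf, but $\mathbb{P} \times Y$ is not Lindelöf — contradiction. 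Hence $X$ is $\sigma$-compact.

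The main obstacle, and the step I would spend the most care on, is step (1): proving that a nowhere-locally-compact co-analytic (not merely analytic) space $X$ that fails to be $\sigma$-compact must contain a closed copy of $\mathbb{P}$. For analytic spaces this is Hurewicz's theorem; for co-analytic spaces one cannot cite it directly, and the natural route is to pass (as in the proof of Lemma \ref{lem1} and Lemma \ref{lem2}) to a perfect image or pre-image in the separable metrizable realm — but $X$ is not assumed metrizable, and productive Lindelöfness need not be preserved under the maps one would want. I expect the resolution to use that $X$, being co-analytic and Lindelöf, has remainder $\beta X \setminus X$ analytic, and that "nowhere locally compact" forces this remainder to be dense; then one analyzes whether $X$ is \v{C}ech-complete (in which case \cite{TT} plus $\sigma$-compactness of Menger \v{C}ech-complete spaces finishes it) or not (in which case the failure of \v{C}ech-completeness, together with the analytic remainder, is exactly what produces the closed $\mathbb{P}$). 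Balancing these two cases correctly — and making sure CH is used only where genuinely needed, namely to get the Michael space and the Hurewicz-type covering property — is where the real work lies.
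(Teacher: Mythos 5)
Your overall architecture (if $X$ is not $\sigma$-compact, extract a closed copy of $\mathbb{P}$, note that closed subspaces of productively Lindel\"of spaces are productively Lindel\"of, and contradict the CH Michael space) would be fine if step (1) were available, but step (1) is exactly where the proposal breaks down, and it is not a repairable technicality. The claim that a nowhere locally compact co-analytic space which is not $\sigma$-compact must contain a closed copy of $\mathbb{P}$ is a Hurewicz-dichotomy statement, and that dichotomy is a theorem only on the analytic side; for co-analytic spaces it consistently fails, and in particular it fails under $V=L$, which satisfies CH. Indeed, by Miller--Fremlin there is under $V=L$ a Menger co-analytic non-$\sigma$-compact set of reals; since the Menger property is inherited by closed subspaces and $\mathbb{P}$ is not Menger, such a space contains no closed copy of $\mathbb{P}$ (and adding the rationals to it keeps it co-analytic, Menger, non-$\sigma$-compact, and makes it nowhere locally compact). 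So no Baire-category/Cantor-scheme argument of the kind you sketch can prove step (1) from CH plus co-analyticity; any correct proof of it would have to use productive Lindel\"ofness in an essential, non-circular way, and you do not supply such an argument -- this is precisely the kind of step that needs \textbf{CD}-type determinacy hypotheses (as in Lemma \ref{lem1} and Proposition \ref{prop8}), not CH. The preliminary claim that CH makes productively Lindel\"of spaces Hurewicz is also only known for spaces of weight at most $\aleph_1$ (via Alster), so it cannot be quoted unconditionally either.

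For contrast, the paper's proof is much shorter and uses the hypotheses quite differently: since $X$ is nowhere locally compact, the analytic (hence Lindel\"of and separable) remainder $\beta X\setminus X$ is dense in $\beta X$, so $\beta X$ is separable and $w(X)\le w(\beta X)\le 2^{\aleph_0}=\aleph_1$ under CH; Lemma \ref{lem7} then makes $X$ Alster, Lemma \ref{lem5} (Lindel\"of remainder) gives countable type, and Lemma \ref{lem6} yields $\sigma$-compactness. So the role of ``nowhere locally compact'' is only to make the remainder dense and thereby bound the weight, and the role of CH is to convert that weight bound into the Alster property -- no dichotomy or Michael-space contradiction is needed.
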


I do not know whether the unwanted ``nowhere locally compact" clause can be removed.  It assures us that $\beta X \setminus X$ is dense in $\beta X$.  Laying the groundwork for proving Theorem \ref{thm4}, we need some definitions and previous results.

\begin{defn}[\cite{Ar1}]
A space is of \textbf{countable type} if each compact set is included in a compact set of countable character.
\end{defn}

\begin{lem}[{~\cite{HI}}]\label{lem5}
A completely regular space is of countable type if and only if some (all) remainder(s) are Lindel\"of.
\end{lem}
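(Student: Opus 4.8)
The plan is to prove the chain of implications: (\emph{some} remainder Lindel\"of) $\Rightarrow$ ($X$ of countable type) $\Rightarrow$ ($\beta X\setminus X$ Lindel\"of) $\Rightarrow$ (\emph{every} remainder Lindel\"of) $\Rightarrow$ (some remainder Lindel\"of), the last step being trivial since $\beta X$ is a compactification (and if $X$ is compact all remainders are empty and $X$ is of countable type, so that degenerate case is harmless). Two routine facts will be used throughout. First, a closed $G_\delta$ subset $C=h^{-1}(0)$ of a compact Hausdorff space $Z$, with $h\colon Z\to[0,1]$ continuous, has countable character in $Z$: the open sets $h^{-1}([0,1/n))$ form a neighbourhood base of $C$, for if $G\supseteq C$ is open then the compact sets $h^{-1}([0,1/n])\setminus G$ decrease to $\emptyset$ and hence one of them is empty. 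Second, a compact set $C\subseteq X$ has the same character in $X$ as in any compactification $bX$: a neighbourhood base of $C$ in $bX$ restricts to one in $X$, and conversely, if $\{W_n\}$ is a base at $C$ in $X$ and $W_n=W'_n\cap X$ with $W'_n$ open in $bX$, then $C\subseteq W'_n\subseteq \operatorname{int}_{bX}\overline{W_n}^{bX}$, and these interiors form a base at $C$ in $bX$ (using regularity of $bX$).

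For (some remainder Lindel\"of) $\Rightarrow$ (countable type): let $bX$ be a compactification with $bX\setminus X$ Lindel\"of and let $K\subseteq X$ be compact. For each $y\in bX\setminus X$ choose $g_y\in C(bX,[0,1])$ with $g_y(y)>0$ and $g_y$ identically $0$ on $K$; then $W_y=\operatorname{coz}(g_y)$ is a cozero neighbourhood of $y$ disjoint from $K$. By Lindel\"ofness, countably many $W_{y_n}$ cover $bX\setminus X$, and $W=\bigcup_n W_{y_n}$ is again a cozero set of $bX$, contains $bX\setminus X$, and misses $K$. Hence $C:=bX\setminus W$ is a zero-set of $bX$ with $K\subseteq C\subseteq X$, so by the first fact it has countable character in $bX$, hence in $X$. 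Thus $X$ is of countable type.

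For (countable type) $\Rightarrow$ ($\beta X\setminus X$ Lindel\"of): given a family $\{U_\alpha\}$ of open subsets of $\beta X$ whose traces cover $\beta X\setminus X$ (every relatively open cover of $\beta X\setminus X$ has this form), the set $K:=\beta X\setminus\bigcup_\alpha U_\alpha$ is closed in $\beta X$, hence compact, and is contained in $X$. By countable type choose a compact $C$ with $K\subseteq C\subseteq X$ of countable character in $X$, hence (second fact) in $\beta X$; write $C=\bigcap_n G_n$ with $G_n$ open in $\beta X$. Each $\beta X\setminus G_n$ is compact and contained in $\beta X\setminus C\subseteq\beta X\setminus K=\bigcup_\alpha U_\alpha$, so is covered by finitely many $U_\alpha$; the countable union over $n$ of these finite subfamilies covers $\beta X\setminus C\supseteq\beta X\setminus X$. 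Hence $\beta X\setminus X$ is Lindel\"of. I expect this to be the crux: the point is that the part of $\beta X$ left uncovered is automatically a \emph{compact subset of $X$}, so countable type applies, after which one transfers the countable character up to $\beta X$ and uses $\sigma$-compactness of the complement; the delicate point in the previous direction is dually to keep the separating neighbourhoods cozero, so that their countable union stays cozero and its complement is a zero-set.

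Finally, for ($\beta X\setminus X$ Lindel\"of) $\Rightarrow$ (every remainder Lindel\"of): given a compactification $bX$, let $f\colon\beta X\to bX$ be the canonical continuous surjection. The restriction of $f$ to $X$, viewed as a map onto $X$, is the identity and hence perfect, so Lemma \ref{lemC} gives $f^{-1}(X)=X$, i.e.\ $f^{-1}(bX\setminus X)=\beta X\setminus X$. By Lemma \ref{lemB}, $f$ restricts to a perfect --- in particular continuous and onto --- map of $\beta X\setminus X$ onto $bX\setminus X$, and a continuous image of a Lindel\"of space is Lindel\"of. This closes the cycle and yields the ``some $=$ all'' equivalence for Lindel\"of remainders together with the characterization by countable type.
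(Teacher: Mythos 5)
Your proof is correct. Note that the paper offers no argument for this lemma at all: it is quoted from Henriksen--Isbell \cite{HI}, so there is no internal proof to compare against. What you have written is essentially the classical Henriksen--Isbell argument, and all the steps check out: the cycle (some remainder Lindel\"of) $\Rightarrow$ (countable type) $\Rightarrow$ ($\beta X\setminus X$ Lindel\"of) $\Rightarrow$ (all remainders Lindel\"of) does yield the full ``some (all)'' equivalence; the two preliminary facts (zero-sets of compact Hausdorff spaces have countable character, and character of a compact subset of $X$ is the same computed in $X$ or in any compactification, via density of $X$) are proved correctly; in the first implication the insistence on cozero separating neighbourhoods, so that their countable union is cozero and its complement a zero-set squeezed between $K$ and $X$, is exactly the right device; in the second implication the observation that $\beta X\setminus\bigcup_\alpha U_\alpha$ is a compact subset of $X$ is indeed the crux, and the passage from a countable neighbourhood base of $C$ in $\beta X$ to $C=\bigcap_n G_n$ is justified (separate points of $\beta X\setminus C$ from the compact set $C$). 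A nice touch is that the last implication reuses the paper's own Lemmas \ref{lemB} and \ref{lemC} applied to the canonical map $\beta X\to bX$ (perfect, being a continuous map of compact Hausdorff spaces) to see that the remainders map perfectly onto one another, so Lindel\"ofness transfers; this is the same mechanism the paper uses in Lemma \ref{lemD}, and it makes your write-up a self-contained substitute for the external citation.
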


\begin{defn}[{\cite{A}}]
A space is \textbf{Alster} if each cover by $G_\delta$'s such that each compact set is included in the union of finitely many members of the cover has a countable subcover.
\end{defn}

\begin{lem}[{~\cite{AAJT, T2}}]\label{lem6}
Alster spaces of countable type are $\sigma$-compact.
\end{lem}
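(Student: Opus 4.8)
The plan is to produce from the two hypotheses a single cover of $X$ by compact $G_\delta$ sets that qualifies as an Alster cover, and then apply the Alster property to extract a countable subcover; since its members are compact and their union is all of $X$, this yields $\sigma$-compactness immediately.

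First I would record the elementary observation that a compact subset $K$ of a completely regular space having countable character is a $G_\delta$. Let $\{V_n : n < \omega\}$ be a neighborhood base of $K$, which we may take to be decreasing. Clearly $K \subseteq \bigcap_n V_n$. Conversely, if $x \notin K$, then since $K$ is compact and $x \notin K$, complete regularity supplies an open $U \supseteq K$ with $x \notin U$; some $V_n \subseteq U$, whence $x \notin V_n$ and so $x \notin \bigcap_n V_n$. Thus $K = \bigcap_n V_n$ is a $G_\delta$.

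Next, let $\mathcal{K}$ be the family of all compact subsets of $X$ of countable character. By the previous paragraph, every member of $\mathcal{K}$ is a $G_\delta$. Because $X$ is of countable type, each compact subset of $X$ --- in particular each singleton --- is contained in some member of $\mathcal{K}$. Hence $\mathcal{K}$ covers $X$; moreover, each compact set is contained in a single member of $\mathcal{K}$, so a fortiori in a finite union of members. Therefore $\mathcal{K}$ is a cover by $G_\delta$'s in which every compact set is covered by finitely many members, i.e.\ exactly an Alster cover in the sense of the definition.

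Finally, the Alster property applied to $\mathcal{K}$ yields a countable subcover $\{K_n : n < \omega\} \subseteq \mathcal{K}$. Each $K_n$ is compact and $X = \bigcup_{n < \omega} K_n$, so $X$ is $\sigma$-compact. The only step that is not a mechanical unwinding of the two definitions is the $G_\delta$ observation above, and the sole point there requiring any care is the appeal to complete regularity to separate $K$ from an external point by an open set small enough to be captured by a basic neighborhood of $K$.
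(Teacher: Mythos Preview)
Your argument is correct. The paper does not give its own proof of this lemma; it simply cites it from \cite{AAJT, T2}. Your proof is the natural one and almost certainly coincides with what appears in those references: form the cover $\mathcal{K}$ of all compact sets of countable character, observe that countable type makes $\mathcal{K}$ an Alster cover, and use the Alster property to extract a countable subcover by compacta. One cosmetic remark: in your first paragraph you invoke complete regularity to separate $x\notin K$ from $K$, but Hausdorffness together with compactness of $K$ already suffices for this separation; complete regularity is a stronger tool than you need there, though of course it is available under the paper's standing hypotheses.
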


\begin{lem}[{~\cite{A}}]\label{lem7}
CH implies productively Lindel\"of spaces of weight $\leq \aleph_1$ are Alster.
\end{lem}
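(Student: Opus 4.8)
The plan is to prove the contrapositive through the defining property of productive Lindel\"ofness: assuming $X$ is productively Lindel\"of of weight $\leq\aleph_1$ but \emph{not} Alster, I will manufacture a Lindel\"of space $Y$ for which $X\times Y$ fails to be Lindel\"of. Fix a cover $\mathcal G$ of $X$ by $G_\delta$ sets witnessing the failure of Alster's property, so that every compact subset of $X$ lies in the union of finitely many members of $\mathcal G$, yet $\mathcal G$ has no countable subcover.

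First I would bound the size of $\mathcal G$, and this is where CH and the weight restriction enter. Embed $X$ in the Tychonoff cube $[0,1]^{\omega_1}$; since $w(X)\leq\aleph_1$ this is possible. Every $G_\delta$ subset of the cube depends on only countably many coordinates, so each member of $\mathcal G$ is the trace of a set coded by a pair consisting of a countable coordinate set $C\in[\omega_1]^{\leq\omega}$ together with a $G_\delta$ subset of the second countable cube $[0,1]^C$. Under CH one has $|[\omega_1]^{\leq\omega}|=\aleph_1$, and each such Polish cube has at most $\aleph_1$ many $G_\delta$ subsets, so altogether $X$ has at most $\aleph_1$ distinct $G_\delta$ subsets. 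Hence I may enumerate $\mathcal G=\{G_\alpha:\alpha<\omega_1\}$, and, after replacing $\mathcal G$ by the family of its finite unions (still of size $\leq\aleph_1$ and still without a countable subcover), I may assume every compact subset of $X$ is contained in a \emph{single} $G_\alpha$. Write each $G_\alpha=\bigcap_{n<\omega}U_\alpha^n$ with the $U_\alpha^n$ open and decreasing in $n$.

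Next comes the construction of the test space. The naive attempt---for each fixed $n$ extracting, by Lindel\"ofness of $X$, a countable subcover of the open cover $\{U_\alpha^n:\alpha<\omega_1\}$---fails precisely because the index chosen at level $n$ may vary with $n$, so no single $G_\alpha=\bigcap_n U_\alpha^n$ need result. The role of $Y$ is to force a \emph{uniform}, diagonal choice of index across all levels. I would build $Y$ from the index set $\omega_1$ together with the level index $\omega$, topologized so that its isolated points carry the sets $U_\alpha^n$ while its (few) non-isolated points have neighbourhoods that are cocountable in the index direction---so that $Y$ is a Lindel\"of $P$-space---yet cofinal in the level direction, so that a neighbourhood of a non-isolated point genuinely reflects membership in an intersection $\bigcap_n U_\alpha^n$ rather than in a mere $U_\alpha^n$. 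From the $U_\alpha^n$ and these neighbourhoods one assembles an open cover of $X\times Y$ whose restriction to the fibre over the non-isolated part behaves like the $G_\delta$-cover $\{G_\alpha\}$ itself; a countable subcover of $X\times Y$ would then cover that fibre using only countably many indices $\alpha$, and reading off those indices would produce a countable subcover of $\{G_\alpha\}$, contradicting the choice of $\mathcal G$. Since $Y$ is Lindel\"of, productive Lindel\"ofness of $X$ makes $X\times Y$ Lindel\"of, the desired contradiction.

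The main obstacle is the middle of the previous paragraph: designing $Y$ so that it is genuinely Lindel\"of while its product with $X$ carries an open cover whose trace on the non-isolated fibre behaves like $\{G_\alpha\}$. The delicacy is that $G_\alpha$ is \emph{not} open in $X$, so the diagonalization over the levels $U_\alpha^n$ must be accomplished by the topology of $Y$ rather than by open sets of $X$; verifying simultaneously that neighbourhoods of the non-isolated points can be taken cocountable (to keep $Y$ Lindel\"of) and fine enough in the level direction (to recover membership in $\bigcap_n U_\alpha^n$, not merely in some $U_\alpha^n$) is the crux. It is exactly here that the reduction to an $\aleph_1$-indexed family, secured by CH together with the weight bound, is indispensable.
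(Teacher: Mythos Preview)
The paper does not prove this lemma at all: it is quoted verbatim from Alster \cite{A} and used as a black box in the proof of Theorem~\ref{thm4}. So there is no ``paper's own proof'' to compare against; the relevant comparison is with Alster's original argument.

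Your outline is faithful to Alster's strategy: bound the number of $G_\delta$ subsets of $X$ by $\aleph_1$ using CH and $w(X)\le\aleph_1$, refine the witnessing $G_\delta$-cover to one of size $\aleph_1$ closed under finite unions, and then build a Lindel\"of test space $Y$ (a $P$-space with one non-isolated point whose neighbourhoods are co-countable) so that the obvious cover of $X\times Y$ assembled from the $U_\alpha^n$ has no countable subcover. The counting step and the reduction to a single $G_\alpha$ containing each compact set are correct as you state them.

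Where your write-up remains a sketch is exactly where you flag it: the construction of $Y$. You describe the intended behaviour of $Y$ (isolated points indexed by $\omega_1\times\omega$, a non-isolated point with co-countable neighbourhoods ``cofinal in the level direction'') but do not actually specify the neighbourhood base at the non-isolated point, nor verify that the resulting space is Lindel\"of, nor exhibit the open cover of $X\times Y$ and check that a countable subcover would yield a countable subfamily of $\{G_\alpha\}$ covering $X$. In Alster's paper these verifications occupy the bulk of the proof, and the precise form of the neighbourhoods at $\infty$ (and the accompanying choice of points $x_\alpha\notin\bigcup_{\beta<\alpha}G_\beta$) matters. As a plan your proposal is on target; as a proof it stops just before the main computation.
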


We can now prove Theorem \ref{thm4}.  Let $X$ be productively Lindel\"of, co-analytic, and nowhere locally compact.  $\beta X \setminus X$ is analytic and hence Lindel\"of and separable.  It is dense in $\beta X$, so $w(\beta X)$ and hence $w(X) \leq 2^{\aleph_0}=\aleph_1$.  Then $X$ is Alster.  Since $\beta X \setminus X$ is Lindel\"of, $X$ has countable type, so it is $\sigma$-compact.
\hfill\qed

For metrizable spaces, Repov\'s and Zdomskyy \cite{RZ} proved:

\begin{prop}\label{prop8}
If there is a Michael space and \textbf{CD} holds, then every co-analytic productively Lindel\"of metrizable space is $\sigma$-compact.
\end{prop}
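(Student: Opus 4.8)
The plan is to reduce the metrizable case to the separable case treated by Lemma \ref{lem1}, exploiting the hypothesis that a Michael space exists. First I would recall the classical fact, due to Michael (and amplified by Alster, Barokova, and others in this circle), that if there is a Michael space then every productively Lindel\"of metrizable space is necessarily \emph{separable} — indeed, more is true: the existence of a Michael space forces every productively Lindel\"of metrizable space to be, up to a $\sigma$-compact kernel, contained in a hereditarily Lindel\"of (hence second countable, in the metrizable case) piece, and in particular productively Lindel\"of metrizable spaces have countable weight. The cleanest route: a non-separable metrizable space contains a closed copy of an uncountable discrete space $D$, and $D$ maps perfectly onto $\mathbb{P}$-like objects; combined with a Michael space one manufactures a Lindel\"of $Y$ with $X\times Y$ non-Lindel\"of. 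So the first key step is: \emph{if there is a Michael space, a productively Lindel\"of metrizable space is separable.}

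Second, having reduced to $X$ separable metrizable, I would invoke the hypotheses directly. By assumption $X$ is co-analytic. To apply Lemma \ref{lem1} I need $X$ to be Menger, so the third step is: \emph{if there is a Michael space, every productively Lindel\"of space is Menger.} This is exactly the content of the Alster/Aurichi/\ldots results cited in the paper (\cite{T1, AAJT, T2, RZ}): the consistency statement ``productively Lindel\"of implies Menger'' is witnessed precisely by the existence of a Michael space (or by related hypotheses that a Michael space provides). Granting this, $X$ is separable metrizable, Menger, and co-analytic, so by \textbf{CD} and Lemma \ref{lem1}, $X$ is $\sigma$-compact, which is the conclusion.

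The main obstacle is the first step — pinning down exactly why a Michael space forces productively Lindel\"of metrizable spaces to be separable, and citing it correctly. The delicate point is that ``Michael space'' as defined here (a Lindel\"of space whose product with $\mathbb{P}$ is not Lindel\"of) must be leveraged against an arbitrary uncountable metrizable obstruction inside $X$; the standard argument observes that a non-separable metrizable space maps perfectly (or contains a closed copy mapping continuously) onto a space that ``contains $\mathbb{P}$'' in the sense relevant for destroying Lindel\"ofness, so that $X\times(\text{Michael space})$ fails to be Lindel\"of. One must be a little careful that productive Lindel\"ofness is inherited by closed subspaces and respected under the relevant maps, but these are routine once the structural dichotomy for metrizable spaces (separable, or else contains an uncountable closed discrete set) is in hand. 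Everything after that reduction is a direct appeal to Lemma \ref{lem1} together with the already-cited equivalence between ``Michael space exists'' and ``productively Lindel\"of $\Rightarrow$ Menger.''
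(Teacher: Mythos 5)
Your overall skeleton is right and is essentially the intended argument: the paper does not prove Proposition \ref{prop8} itself (it is cited from Repov\'s--Zdomskyy), but its own tools give exactly your route --- productively Lindel\"of plus a Michael space yields Menger (Lemma \ref{lem9}), and then separable metrizable Menger co-analytic spaces are $\sigma$-compact under \textbf{CD} (Lemma \ref{lem1}). However, the step you single out as the ``main obstacle'' is in fact trivial, and the argument you sketch for it is off the mark. Separability of a productively Lindel\"of metrizable space needs no Michael space at all: productively Lindel\"of implies Lindel\"of (take $Y$ a singleton), and a Lindel\"of metrizable space is second countable, hence separable. Your proposed justification --- that a non-separable metrizable $X$ contains an uncountable closed discrete $D$ which ``maps perfectly onto $\mathbb{P}$-like objects'' so that $X\times(\text{Michael space})$ fails to be Lindel\"of --- does not work as stated (a perfect image of a discrete space is again discrete-like, since fibers are compact, hence finite), and it is unnecessary: an uncountable closed discrete subspace already destroys Lindel\"ofness of $X$ itself. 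A second, minor point: you refer to an ``equivalence'' between the existence of a Michael space and ``productively Lindel\"of $\Rightarrow$ Menger''; the paper only cites the one implication (Michael space $\Rightarrow$ productively Lindel\"of spaces are Menger, Lemma \ref{lem9}), and that one direction is all your proof uses, so you should not claim the equivalence. With the first step replaced by the one-line observation above, your proof is correct and matches the natural (and presumably the cited) argument.
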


We would like to drop the metrizability assumption, using:

\begin{lem}[{~\cite{RZ}}]\label{lem9}
If there is a Michael space, then productively Lindel\"of spaces are Menger.
\end{lem}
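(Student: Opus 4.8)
The plan is to argue the contrapositive. Suppose $X$ is productively Lindel\"of — hence Lindel\"of, taking the second factor to be a point — but not Menger; I will produce a Lindel\"of space $Y$ with $X\times Y$ not Lindel\"of. For $Y$ I take a Michael space $M$ in its standard incarnation: $M=C\cup\{y_\alpha:\alpha<\omega_1\}$, where $C$ is a countable dense metrizable subspace whose points keep their Euclidean neighbourhoods, each $y_\alpha$ is isolated, and — under the usual continued-fraction homeomorphism $\mathbb P\cong\omega^\omega$ — the $y_\alpha$'s are coded by a family $\{g_\alpha:\alpha<\omega_1\}\subseteq\omega^\omega$ whose concentration is what makes $M$ Lindel\"of while $M\times\mathbb P$ is not. (That the existence of \emph{any} Michael space yields one of this concrete form is standard, but I would want to cite it carefully.)

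First I would massage the failure of the Menger property into a convenient form. By Lindel\"ofness, fix a sequence $\langle\mathcal U_n:n<\omega\rangle$ of countable open covers witnessing it, and pass to the $\subseteq$-increasing covers $\mathcal V_n=\{V^n_k:k<\omega\}$ with $V^n_k=\bigcup_{j\le k}U^n_j$; since every finite subfamily of $\mathcal U_n$ sits inside a single $V^n_k$, the $\mathcal V_n$ still witness non-Menger, now in the form: for every $g\in\omega^\omega$ there is $x\in X$ with $x\notin V^n_{g(n)}$ for all $n$. Setting $f_x(n)=\min\{k:x\in V^n_k\}$ (finite, since $\mathcal V_n$ covers), this says that $\{f_x:x\in X\}$ everywhere-dominates: $\forall g\,\exists x\,\forall n\, f_x(n)>g(n)$; note also that $\{x:f_x(n)\le k\}=V^n_k$ is open. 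Applying everywhere-domination to each $g_\alpha$, I fix $x_\alpha\in X$ with $x_\alpha\notin V^n_{g_\alpha(n)}$ for all $n$.

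The crux is then the claim that $E=\{(x_\alpha,y_\alpha):\alpha<\omega_1\}$ is closed and discrete in $X\times M$; as $|E|=\omega_1$ (the $y_\alpha$ are distinct), this makes $X\times M$ non-Lindel\"of, contradicting productive Lindel\"ofness. Discreteness holds because $X\times\{y_\alpha\}$ is open and meets $E$ only in $(x_\alpha,y_\alpha)$; this also shows no limit point of $E$ lying outside $E$ can sit over a $y_\beta$ (use the open slice $(X\setminus\{x_\beta\})\times\{y_\beta\}$). So let $(x,y)$ with $y\in C$ be a candidate limit point, let $n$ be the length of the continued fraction of $y$, and pick $k_0$ with $x\in V^n_{k_0}\cap V^{n+1}_{k_0}=:O$. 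By the continued-fraction coding, a small enough Euclidean neighbourhood $W$ of $y$ forces every $\alpha$ with $y_\alpha\in W$ to satisfy $g_\alpha(n)\ge k_0$ or $g_\alpha(n+1)\ge k_0$; by the choice of $x_\alpha$ together with monotonicity of the $V^n_k$ in $k$, such an $x_\alpha$ then lies outside $O$. Hence the neighbourhood $O\times W$ of $(x,y)$ misses $E$, so $(x,y)$ is not a limit point.

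The step I expect to be delicate is exactly this interface between the two factors: making precise that the continued-fraction encoding of the Michael space turns ``$y_\alpha$ close to the rational $y$'' into ``$g_\alpha$ large at a coordinate determined by $y$'', allowing for the two-sided approximations $[a_0,\dots,a_{m-1}]=[a_0,\dots,a_{m-1}-1,1]$ (which is why I kept both $V^n$ and $V^{n+1}$ above), and confirming the normal-form statement about Michael spaces invoked in the first paragraph. A cleaner alternative opening would replace that paragraph with the game-theoretic characterization of the Menger property (Hurewicz; Telg\'arsky; Scheepers): non-Menger means player ONE has a winning strategy in the Menger game, and unravelling such a strategy over countable covers produces the same everywhere-dominating data, now indexed along the branches of $\omega^\omega$, after which the construction of $E$ goes through unchanged.
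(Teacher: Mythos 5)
Your core construction is the classical Michael argument and, for a Michael space of the special ``concentrated'' form you describe, it works: the normalization of the non-Menger witness to increasing covers, the definition of the $x_\alpha$'s, and the verification that $E=\{(x_\alpha,y_\alpha):\alpha<\omega_1\}$ is closed discrete in $X\times M$ (including the continued-fraction bookkeeping you flag) are all fine in outline. The genuine gap is the very first step, which you parenthetically defer: the claim that the existence of \emph{any} Michael space yields one of the form ``countable dense metrizable part with its usual neighbourhoods, plus uncountably many isolated points coded by a concentrated family of irrationals.'' This is not standard, and it is not provable from the hypothesis of the lemma. For a space of that form, Lindel\"ofness is literally equivalent to the family $\{g_\alpha\}$ being concentrated on the countable part, and the existence of an uncountable set of irrationals concentrated on a countable set implies $\mathfrak{b}=\aleph_1$ (cover a putative dominating witness by compact sets $\{h:h\le g\}$). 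On the other hand, under MA $+\neg$CH one has $\operatorname{cov}(\mathcal{M})=\mathfrak{d}$, which yields a Michael space (Alster, Moore), while $\mathfrak{b}=\mathfrak{c}>\aleph_1$, so no Michael space of your normal form exists. Hence your argument proves the conclusion only from the strictly stronger hypothesis ``there is a concentrated-type Michael space'' (essentially $\mathfrak{b}=\aleph_1$), not from ``there is a Michael space.'' Your proposed game-theoretic variant does not repair this, since the problem lies on the Michael-space side, not on how the non-Menger data is extracted.

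Note also that the paper does not prove this lemma at all; it cites Repov\v{s}--Zdomskyy, whose argument must (and does) handle an arbitrary Lindel\"of $M$ with $M\times\mathbb{P}$ non-Lindel\"of, transferring an arbitrary witnessing open cover of $M\times\mathbb{P}$ to $X\times M$ rather than exploiting isolated points and concentration. That transfer is where the real work lies: without isolated points one cannot manufacture a closed discrete set, and the interface between the non-Menger covers of $X$ and a general non-Lindel\"ofness witness for $M\times\mathbb{P}$ requires more care than the pointwise domination you use (a function differing from a finite sequence need not exceed it anywhere, which is exactly the difficulty your concentrated model lets you bypass). What you have written is, in effect, a correct proof of the known weaker statement ``$\mathfrak{b}=\aleph_1$ implies productively Lindel\"of spaces are Menger,'' and to prove the lemma as stated you would need either the cover-transfer argument of the cited source or a genuinely new reduction of an arbitrary Michael space to your normal form (which, as shown above, cannot exist).
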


As in \cite{TT}, we run up against the unsolved problem:

\begin{prob}\label{prob2}
Is it consistent that co-analytic Menger spaces are $\sigma$-compact?
\end{prob}

However, we can apply the various partial results in the previous section and \cite{TT} to obtain:

\begin{thm}\label{thm10}
Suppose there is a Michael space and \textbf{CD} holds.  Then if $X$ is co-analytic and productively Lindel\"of, then $X$ is $\sigma$-compact if either:
\begin{enumerate}
\item{
closed subspaces of $X$ are $G_\delta$'s,}
\item[]{or}
\item{$X$ is a $\mathbf{\Sigma}$-space,}
\item[]{or}
\item{$X$ is a $p$-space,}
\item[]{or}
\item{$X$ is a topological group.}
\end{enumerate}
\end{thm}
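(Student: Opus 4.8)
The plan is to reduce each of the four cases to an already-established $\sigma$-compactness result by producing, from the hypotheses, enough structure to force Mengerness plus some covering/completeness property. The common engine is Lemma~\ref{lem9}: since there is a Michael space, $X$ being productively Lindel\"of gives that $X$ is Menger, and as a Menger space it is Lindel\"of. So throughout we may assume $X$ is a Lindel\"of, Menger, co-analytic space, and we must upgrade this to $\sigma$-compact in each of the four situations, invoking \textbf{CD} where the argument passes through the separable metrizable case (Lemma~\ref{lem1}).

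For case (4), $X$ a topological group, this is essentially Theorem~\ref{thm1}: a Menger co-analytic topological group is a perfect pre-image of a separable metrizable co-analytic Menger space, hence $\sigma$-compact under \textbf{CD}; here we do not even need the Michael space beyond getting Mengerness, but since we have assumed it, the argument is immediate --- so I would dispatch (4) in one line by citing Theorem~\ref{thm1}. For cases (1), (2), (3), the strategy is to show that under the stated hypothesis a Menger co-analytic $X$ is a perfect pre-image of (or otherwise reducible to) a separable metrizable co-analytic Menger space, or else directly an Alster space of countable type so that Lemma~\ref{lem6} applies. For (1), ``closed subspaces of $X$ are $G_\delta$'s'': since $X$ is co-analytic its remainder is analytic hence Lindel\"of, so $X$ is of countable type (Lemma~\ref{lem5}); the $G_\delta$-diagonal-type hypothesis on closed sets should, combined with Lindel\"ofness and Mengerness, give that $X$ is perfect in the covering sense and feed into Lemma~\ref{lem6} after checking $X$ is Alster --- here I expect to invoke results from \cite{TT} that a co-analytic space in which closed sets are $G_\delta$ and which is Menger is $\sigma$-compact, the Michael space and \textbf{CD} already having delivered Mengerness. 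For (2), $X$ a $\mathbf{\Sigma}$-space, and (3), $X$ a $p$-space: both are classes where a Lindel\"of representative admits a perfect map onto a separable metrizable space (a $\mathbf{\Sigma}$-space which is Lindel\"of maps perfectly onto a metrizable space by Nagami's theorem; a Lindel\"of $p$-space is, by Arhangel'ski\u\i, a perfect pre-image of a separable metrizable space). So in each of (2) and (3) one gets a perfect map $p\colon X\to M$ with $M$ separable metrizable; since perfect images of co-analytic spaces are co-analytic (quoted from \cite{TT}), $M$ is co-analytic, and $M$ is Menger as a continuous image of the Menger $X$; by Lemma~\ref{lem1} and \textbf{CD}, $M$ is $\sigma$-compact, and since $p$ is perfect, $X=\bigcup_n p^{-1}(K_n)$ is $\sigma$-compact.

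The step I expect to be the main obstacle is case (1). Unlike (2), (3), (4), there is no off-the-shelf perfect map onto a metrizable space available just from ``closed subspaces are $G_\delta$''; this hypothesis is a weak form of perfectness/submetrizability and does not by itself produce a metrizable perfect image. The right route is probably: $X$ co-analytic $\Rightarrow$ remainder analytic $\Rightarrow$ $X$ of countable type (Lemma~\ref{lem5}); then show that a productively Lindel\"of (hence Menger, hence Lindel\"of) space in which closed sets are $G_\delta$'s is Alster --- this is where the work lies, and it may need the Michael space hypothesis in an essential way or a citation to \cite{TT, AAJT, T2} for the implication ``Menger $+$ closed sets $G_\delta$ $+$ co-analytic $\Rightarrow$ Alster'' or even directly ``$\Rightarrow$ $\sigma$-compact''; having Alster plus countable type, Lemma~\ref{lem6} finishes. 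I would therefore structure the proof as: first record the reduction to Lindel\"of Menger co-analytic via Lemma~\ref{lem9}; then handle (4) by Theorem~\ref{thm1}; then (2) and (3) by the perfect-image-onto-separable-metrizable argument plus Lemma~\ref{lem1}; and finally (1) by the countable-type/Alster route via Lemmas~\ref{lem5} and~\ref{lem6}, citing \cite{TT} for the Alster (or $\sigma$-compactness) conclusion from the $G_\delta$ hypothesis.
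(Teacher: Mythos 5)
Your overall architecture is the paper's: Lemma \ref{lem9} turns productive Lindel\"ofness plus a Michael space into Mengerness, and then each of the four conditions is fed into a ``\textbf{CD} implies Menger co-analytic $+$ (condition) $\Rightarrow$ $\sigma$-compact'' result (the paper's proof is literally a pointer to Theorem \ref{thm1}, Section 3, and \cite{TT}). Cases (3) and (4) are handled correctly. But case (2) contains a genuine error: Nagami's theorem does \emph{not} say that a Lindel\"of $\mathbf{\Sigma}$-space maps perfectly onto a separable metrizable space; it says it is a \emph{continuous} image of a Lindel\"of $p$-space, and ``perfect pre-image of a separable metrizable space'' is precisely the definition of Lindel\"of $p$, a strictly stronger property. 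Okunev's space, discussed in this very paper, is Lindel\"of $\mathbf{\Sigma}$ (a continuous image of the Alexandrov duplicate of $\mathbb{P}$, which is Lindel\"of $p$) yet not of countable type, hence not Lindel\"of $p$; and a merely continuous image being $\sigma$-compact gives you nothing back about $X$. The repair is exactly the Section 3 machinery the paper alludes to, as in the proof of Theorem \ref{thm32}: since $X$ is co-analytic, its remainder is analytic, hence Lindel\"of $\mathbf{\Sigma}$ (equivalently an s-space), and by Arhangel'ski\u\i's lemmas (Lemma \ref{lem37} together with the remainder lemmas) a Lindel\"of $\mathbf{\Sigma}$-space which is an s-space --- equivalently, whose remainder is Lindel\"of $\mathbf{\Sigma}$ --- is a Lindel\"of $p$-space. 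With that, your case-(3) argument (perfect image is co-analytic and Menger, Lemma \ref{lem1} under \textbf{CD}, pull back $\sigma$-compactness along the perfect map) applies verbatim.

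On case (1) you offer only a hope that some combination of Alster, countable type, and \cite{TT} works; your sketched derivation of ``Alster'' from ``closed sets are $G_\delta$'' is not an argument and is not the route the cited literature takes. In fairness, the paper itself disposes of this case by citing \cite{TT} for ``\textbf{CD} implies Menger co-analytic spaces in which closed sets are $G_\delta$'s are $\sigma$-compact,'' so deferring to that result is legitimate --- but then you should cite it as such rather than invent an unsupported Alster path (note also that the Michael space hypothesis plays no role beyond Lemma \ref{lem9}, and that Mengerness already gives Lindel\"ofness, so no separate ``upgrade'' step is needed).
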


\begin{proof}
These conditions all imply under \textbf{CD} that Menger co-analytic spaces are $\sigma$-compact. $\bsigma$-spaces and p-spaces are discussed in Section 3.
\end{proof}

The two hypotheses of Theorem \ref{thm10} are compatible, since it is well-known that CH is compatible with the existence of a measurable cardinal, and that CH implies the existence of a Michael space \cite{Mi}. Various other hypotheses about cardinal invariants of the continuum also imply the existence of a Michael space -- see e.g. \cite{Moore2}. These are all compatible with \textbf{CD}.

We also have:

\begin{thm}\label{thm12}
There is a Michael space if and only if productively Lindel\"of \v{C}ech-complete spaces are $\sigma$-compact.
\end{thm}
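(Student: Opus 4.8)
The plan is to prove both directions separately, using the machinery already assembled. For the forward direction, suppose there is a Michael space and let $X$ be productively Lindel\"of and \v{C}ech-complete. By Lemma \ref{lem9}, $X$ is Menger. Now I would invoke the result quoted in the excerpt from \cite{TT}, namely that Menger \v{C}ech-complete spaces are $\sigma$-compact; this immediately gives the conclusion. So the forward direction is essentially a one-line composition of two cited results, and the only thing to check is that the hypothesis ``there is a Michael space'' is being used correctly, i.e.\ that Lemma \ref{lem9} applies verbatim.

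For the converse, I would argue by contrapositive: assume there is no Michael space and produce a productively Lindel\"of \v{C}ech-complete space that is not $\sigma$-compact. The natural candidate is $\mathbb{P}$, the space of irrationals. It is completely metrizable, hence \v{C}ech-complete, and it is not $\sigma$-compact. The point already noted in the excerpt (in the discussion preceding Theorem \ref{thm4}) is that if there is no Michael space, then $\mathbb{P}$ is productively Lindel\"of: indeed, by definition a Michael space is a Lindel\"of space whose product with $\mathbb{P}$ fails to be Lindel\"of, so the nonexistence of a Michael space says precisely that $\mathbb{P}\times Y$ is Lindel\"of for every Lindel\"of $Y$, which is exactly the statement that $\mathbb{P}$ is productively Lindel\"of. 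Hence $\mathbb{P}$ witnesses the failure of ``productively Lindel\"of \v{C}ech-complete spaces are $\sigma$-compact''.

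The main conceptual obstacle — though it turns out not to be an obstacle at all once one lines up the definitions — is recognizing that the converse direction does not require any appeal to \textbf{CD} or to descriptive-set-theoretic hypotheses; it is a soft unwinding of the definition of a Michael space applied to the single space $\mathbb{P}$. One should take a little care that a Michael space is required only to be Lindel\"of (not, say, regular or separable) in the definition used here, so that the equivalence ``no Michael space $\iff$ $\mathbb{P}$ is productively Lindel\"of'' is literally a restatement; with the definition given in the excerpt this is immediate. Assembling these observations yields the biconditional.
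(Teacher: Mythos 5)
Your proposal is correct and coincides with the paper's own argument: the forward direction is exactly Lemma \ref{lem9} combined with the result from \cite{TT} that Menger \v{C}ech-complete spaces are $\sigma$-compact, and the converse is the same observation that without a Michael space $\mathbb{P}$ is a productively Lindel\"of, \v{C}ech-complete, non-$\sigma$-compact witness. Nothing further is needed.
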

\begin{proof}
If there is no Michael space, the space of irrationals is productively Lindel\"of, and of course it is \v{C}ech-complete but not $\sigma$-compact.  If there is a Michael space, productively Lindel\"of spaces are Menger, but we showed in \cite{TT} that Menger \v{C}ech-complete spaces are $\sigma$-compact.
\end{proof}

Repov\'s and Zdomskyy \cite{RZ} prove:

\begin{prop}\label{prop13}
Suppose $\op{cov}(\mc{M}) > \omega_1$, and there is a Michael space.  Then every productively Lindel\"of $\Sigma_{2}^1$ subset of the Cantor space is $\sigma$-compact.
\end{prop}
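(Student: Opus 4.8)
The plan is to reduce the $\Sigma^1_2$ case to the co-analytic (i.e., $\Pi^1_1$) case already handled, by exploiting the strong set-theoretic hypothesis $\op{cov}(\mc{M}) > \omega_1$. The key structural fact is the classical representation of a $\Sigma^1_2$ set as an $\omega_1$-union of Borel sets: every $\Sigma^1_2$ subset $X$ of the Cantor space can be written as $X = \bigcup_{\alpha < \omega_1} B_\alpha$ where each $B_\alpha$ is Borel (this is the Luzin--Sierpi\'nski / Shoenfield absoluteness analysis of $\Sigma^1_2$ sets via trees on $\omega \times \omega_1$). First I would observe that productive Lindel\"ofness is inherited by closed (indeed, by $F_\sigma$ and more generally by Lindel\"of) subspaces, so each $B_\alpha$, being Borel hence co-analytic, is productively Lindel\"of. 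Each $B_\alpha$ is a productively Lindel\"of Borel---in particular co-analytic---subset of the Cantor space, so by the hypotheses (Michael space plus $\op{cov}(\mc{M}) > \omega_1 \Rightarrow \mathbf{CD}$? no---see below) we want to conclude each $B_\alpha$ is $\sigma$-compact.

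Here is where I must be careful about which earlier result to invoke. Proposition~\ref{prop8} gives, under ``there is a Michael space and $\mathbf{CD}$,'' that co-analytic productively Lindel\"of metrizable spaces are $\sigma$-compact; but the stated hypothesis here is $\op{cov}(\mc{M}) > \omega_1$, not $\mathbf{CD}$. So the intended route is almost certainly different: with $\op{cov}(\mc{M}) > \omega_1$ one does not need determinacy for \emph{Borel} sets at all, since Menger \emph{Borel} (indeed absolute-$F_{\sigma\delta}$, hence Borel of low rank) sets of reals are provably $\sigma$-compact in ZFC when they are ``nice enough''---more to the point, by Lemma~\ref{lem9} a Michael space makes each $B_\alpha$ Menger, and a Menger \emph{Borel} set of reals is $\sigma$-compact in ZFC by Hurewicz's theorem together with the perfect-set analysis, without any determinacy hypothesis. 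So each $B_\alpha$ is $\sigma$-compact, hence $K_\sigma$, hence contained in a $\sigma$-compact subset $K_\alpha$ of the Cantor space.

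It remains to pass from the $\omega_1$-union $X = \bigcup_{\alpha<\omega_1} K_\alpha$ of $\sigma$-compact sets to $\sigma$-compactness of $X$ itself; this is exactly the step that consumes the hypothesis $\op{cov}(\mc{M}) > \omega_1$. The point is that if $X$ were not $\sigma$-compact, then since $X$ is productively Lindel\"of it would (by the results in \cite{RZ} on productively Lindel\"of sets and Michael spaces, or by a direct Baire-category argument) contain a closed copy of the irrationals $\mathbb{P}$, or at least a subset that is not contained in any $K_\sigma$; but a closed copy of $\mathbb{P}$ inside $X = \bigcup_{\alpha<\omega_1}K_\alpha$ would be covered by $\omega_1$ many $\sigma$-compact---hence meager in $\mathbb{P}$---sets, contradicting $\op{cov}(\mc{M}) > \omega_1$ (which says $\mathbb{P}$ is not the union of $\omega_1$ meager sets). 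Therefore $X$ must be $\sigma$-compact.

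The main obstacle, I expect, is the final ``gluing'' step: verifying rigorously that failure of $\sigma$-compactness of a productively Lindel\"of $\Sigma^1_2$ set forces a closed (or otherwise non-$K_\sigma$-able) copy of the irrationals into it, so that $\op{cov}(\mc{M}) > \omega_1$ can be applied. This is where one genuinely needs the Michael-space hypothesis and the $\op{cov}(\mc{M})$ inequality to interact---presumably via a theorem from \cite{RZ} to the effect that, if there is a Michael space, a productively Lindel\"of subspace of a Polish space that is not $\sigma$-compact contains a closed copy of $\mathbb{P}$; modulo that input the argument is a routine category computation, but without it the whole reduction stalls.
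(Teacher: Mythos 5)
The paper offers no proof of Proposition~\ref{prop13} at all --- it is quoted from Repov\v{s}--Zdomskyy \cite{RZ} --- so your argument has to stand on its own, and as written it does not. The decisive error is the hereditariness claim at the start: productive Lindel\"ofness is inherited by closed (hence $F_\sigma$) subspaces, but emphatically not by arbitrary Lindel\"of or Borel subspaces. Indeed, under the very hypothesis that a Michael space exists, the Cantor space $2^\omega$ is compact, hence productively Lindel\"of, while its $G_\delta$ subspace homeomorphic to $\mathbb{P}$ is Borel and Lindel\"of but not productively Lindel\"of; so your claim, applied to $X=2^\omega$, is inconsistent with your own hypotheses. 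Consequently there is no reason why the Borel pieces $B_\alpha$ of the Sierpi\'nski--Shoenfield decomposition of the $\Sigma^1_2$ set $X$ should be productively Lindel\"of, or Menger, and the conclusion that each $B_\alpha$ is $\sigma$-compact is unsupported. (Note also that ``contained in a $\sigma$-compact subset of the Cantor space'' is vacuous, since every subset of $2^\omega$ lies in the compact set $2^\omega$ itself; what your plan needs is that each $B_\alpha$ is $\sigma$-compact as a space, i.e.\ $F_\sigma$ in $2^\omega$.)

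The ``gluing'' step you defer is not a routine category computation either; it is essentially the whole theorem. What you need there is the Hurewicz dichotomy for $\Sigma^1_2$ sets --- every non-$\sigma$-compact $\Sigma^1_2$ set contains a closed-in-itself copy of $\mathbb{P}$ --- which is not a ZFC theorem (it already fails for co-analytic sets under $V=L$, which is the point of this whole line of research), and you do not derive it from $\op{cov}(\mc{M})>\omega_1$. Moreover, if you had that dichotomy you would be done immediately with no decomposition and no appeal to $\op{cov}(\mc{M})$: a copy of $\mathbb{P}$ closed in the productively Lindel\"of $X$ would itself be productively Lindel\"of, which directly contradicts the existence of a Michael space. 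Finally, the meagerness computation as stated is incorrect: a copy of $\mathbb{P}$ that is closed only in $X$, covered by $\omega_1$ many $\sigma$-compact subsets of $2^\omega$, gives no contradiction with $\op{cov}(\mc{M})>\omega_1$, because the trace of a compact subset of $2^\omega$ on such a copy need not be nowhere dense (take the copy dense in $2^\omega$ and the compact set to be $2^\omega$); for the category argument one needs the covering pieces to be $\sigma$-compact subsets of $X$, so that their traces on the relatively closed copy of $\mathbb{P}$ are $\sigma$-compact and hence meager. So both halves of your reduction are missing, and the proposal does not constitute a proof; the actual argument must be taken from \cite{RZ}.
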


\begin{ex}
The metrizability condition cannot be removed; \emph{Okunev's space} is a productively Lindel\"of continuous image of a co-analytic space, but is not $\sigma$-compact (see \cite{BT}).  In more detail, consider the Alexandrov duplicate $A$ of the space $\mathbb{P}$ of irrationals. $A$ is co-analytic, since it has a countable remainder with a countable base. A countable metrizable space is homeomorphic to an $F_\sigma$ in the Cantor space, and so is analytic. Okunev's space is obtained by collapsing the non-discrete copy of $\mathbb{P}$ in $A$ to a point.  Note that Okunev's space is not co-analytic. To see this, if it were, it would be of countable type by Lemma \ref{lem5}. In \cite{BT} we showed that this space is Alster but not $\sigma$-compact, which would contradict Lemma \ref{lem6}.
\end{ex}

\section{K-analytic and K-Lusin spaces}
We take the opportunity to make some observations about \textit{K-analytic}, \textit{K-Lusin}, \textit{absolute Borel}, \emph{Frol\'ik}, and what {Arhangel'ski\u\i} \cite{Ar4} calls \emph{Borelian of the first type} spaces. These are all attempts to generalize concepts of Descriptive Set Theory beyond separable metrizable spaces.

\begin{defn}[{ \cite{BT}}]
A space is \textbf{Frol\'ik} if it is homeomorphic to a closed subspace of a countable product of $\sigma$-compact spaces.
\end{defn}

\begin{defn}
	A space $X$ is \textbf{absolute Borel} if it is in the $\sigma$-algebra generated by the closed sets of $\beta X$. A space $X$ is \textbf{Borelian of the first type} if it is in the $\sigma$-algebra generated by the open sets of $\beta X$.
\end{defn}

\begin{defn}
	A space is \textbf{projectively $\sigma$-compact} (\textbf{projectively countable}) if its continuous images in separable metrizable spaces are all $\sigma$-compact (countable).
\end{defn}

Frol\'ik \cite{F} showed that each Frol\'ik space is absolute $K_{\sigma\delta}$ (and therefore Lindel\"of), i.e. the intersection of countably many $\sigma$-compact subspaces of its \v{C}ech-Stone compactification (and conversely), and also is the continuous image of a \v{C}ech-complete Frol\'ik space, so that Frol\'ik spaces are absolute Borel and {$K$-analytic}. $K$-Lusin spaces are clearly $K$-analytic; $K$-Lusin spaces are also Frol\'ik \cite[5.8.6]{RJ}. Since $K$-analytic 	metrizable spaces are analytic and analytic Menger spaces are $\sigma$-compact \cite{Ar2}, we see that Menger K-analytic spaces are projectively $\sigma$-compact \cite{BT}.  In \cite{T1} we proved that projectively $\sigma$-compact Lindel\"of spaces are \emph{Hurewicz}, so we conclude:

\begin{thm}
Menger K-analytic spaces are Hurewicz.
\end{thm}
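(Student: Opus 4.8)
The plan is to read this off from the two facts assembled in the paragraph immediately preceding the statement, together with the standard observation that every Menger space is Lindel\"of. So let $X$ be a Menger $K$-analytic space.

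First I would check that $X$ is projectively $\sigma$-compact, i.e.\ that every continuous image $Y$ of $X$ in a separable metrizable space is $\sigma$-compact. Such a $Y$ is Menger (the Menger property is preserved by continuous maps, since a sequence of open covers of $Y$ pulls back to one of $X$ and the chosen finite subfamilies push forward), and $Y$ is $K$-analytic (a continuous surjection onto $Y$ from a Lindel\"of \v{C}ech-complete space is obtained by composition). Being metrizable and $K$-analytic, $Y$ is analytic by \cite[5.5.1]{RJ}; being analytic and Menger, it is $\sigma$-compact by Arhangel'ski\u\i's theorem \cite{Ar2}. This is exactly the argument of \cite{BT}. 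Next, since $X$ is Menger it is Lindel\"of. Finally I would invoke the theorem of \cite{T1} that projectively $\sigma$-compact Lindel\"of spaces are Hurewicz, concluding that $X$ is Hurewicz.

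There is no real obstacle here: the statement is a formal consequence of results already quoted, and, unlike most theorems in this paper, it requires no extra set-theoretic hypothesis. The only steps deserving a word of comment are the two preservation facts above, both immediate from the relevant definitions. If one insisted on an argument not citing \cite{T1} as a black box, the genuine content would shift to reproving that projective $\sigma$-compactness together with the Lindel\"of property implies the Hurewicz property; but since the excerpt permits us to use that result, the proof is short.
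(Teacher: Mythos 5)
Your proposal is correct and follows exactly the paper's own route: continuous separable metrizable images of $X$ are Menger and $K$-analytic, hence analytic and therefore $\sigma$-compact by Arhangel'ski\u\i's theorem, so $X$ is projectively $\sigma$-compact, and then the result of \cite{T1} that projectively $\sigma$-compact Lindel\"of spaces are Hurewicz finishes the argument. No differences worth noting.
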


\emph{Hurewicz} is a property strictly between $\sigma$-compact and Menger.  A space is \textbf{Hurewicz} if every \v{C}ech-complete space including it includes a $\sigma$-compact subspace including it (This is equivalent to the usual definition -- see \cite{T2}). This theorem may give some inkling as to why it seems to be hard to find topological properties that imply Hurewicz spaces are $\sigma$-compact which don't in fact imply Menger spaces are $\sigma$-compact. There are, however, Hurewicz subsets of $\mathbb{R}$ which are not $\sigma$-compact --- see e.g. \cite{Ts}.

There is a projectively $\sigma$-compact Frol\'ik space which is not $\sigma$-compact (Okunev's space -- see \cite{BT}). Okunev's space is also not \v{C}ech-complete, since Menger \v{C}ech-complete spaces are $\sigma$-compact \cite{TT}.  There is a Frol\'ik subspace of $\mathbb{R}$ which is not \v{C}ech-complete, since ``\v{C}ech-complete" translates into being a $G_{\delta}$, and we know the Borel hierarchy is non-trivial. There are of course analytic subsets of $\mathbb{R}$ which are not absolute Borel and hence not Frol\'ik. Moore's L-space \cite{Moore} is projectively countable but not K-analytic. The reason is that all its points are $G_\delta$'s, which contradicts projectively countable for K-analytic spaces \cite[5.4.3]{RJ}.

Since K-Lusin spaces are Frol\'ik, it is worth mentioning that:

\begin{prop}[{ \cite{BT}}]
	There are no Michael spaces if and only if every Frol\'ik space is productively Lindel\"of.
\end{prop}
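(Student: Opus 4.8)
The plan is to deduce both implications from the facts that the class of productively Lindel\"of spaces is closed under continuous images and under perfect preimages, combined with Frol\'ik's structure theorem quoted above. The starting point is the trivial but crucial observation that ``there is no Michael space'' means precisely ``$\mathbb{P}$ is productively Lindel\"of'': a Michael space is by definition nothing but a Lindel\"of space witnessing that $\mathbb{P}$ fails to be productively Lindel\"of.

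For the direction ($\Leftarrow$), I would observe that $\mathbb{P}\cong\omega^\omega$ is a closed subspace of a countable product of countable discrete, hence $\sigma$-compact, spaces, so $\mathbb{P}$ is Frol\'ik. Thus if every Frol\'ik space is productively Lindel\"of, then so is $\mathbb{P}$, and hence there is no Michael space.

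For the direction ($\Rightarrow$), assume there is no Michael space, so that $\mathbb{P}$ is productively Lindel\"of, and run the following chain. (i) If $Z$ is productively Lindel\"of and $f\colon Z\to X$ is a continuous surjection, then $X$ is productively Lindel\"of: for Lindel\"of $Y$ the space $Z\times Y$ is Lindel\"of and $f\times\mathrm{id}_Y$ carries it continuously onto $X\times Y$. Since every nonempty Polish space is a continuous image of $\mathbb{P}$, every Polish space is therefore productively Lindel\"of. (ii) If $g\colon Z\to P$ is perfect and $P$ is productively Lindel\"of, then $Z$ is productively Lindel\"of: for Lindel\"of $Y$ the map $g\times\mathrm{id}_Y\colon Z\times Y\to P\times Y$ is perfect (\cite{E}), $P\times Y$ is Lindel\"of, and a perfect preimage of a Lindel\"of space is Lindel\"of (\cite{E}). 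Now every Lindel\"of \v{C}ech-complete space $Z$ is paracompact and \v{C}ech-complete, hence a perfect preimage of a completely metrizable space (\cite{E}), which---being a perfect image of $Z$---is Lindel\"of, i.e.\ separable, i.e.\ Polish; so by (ii) every Lindel\"of \v{C}ech-complete space is productively Lindel\"of. (iii) Finally, by Frol\'ik's theorem every Frol\'ik space is a continuous image of a \v{C}ech-complete Frol\'ik space; the latter, being Frol\'ik, is Lindel\"of, hence it is Lindel\"of \v{C}ech-complete and so productively Lindel\"of by the previous step, and then by (i) so is the Frol\'ik space in question.

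The main delicate point, rather than any deep obstacle, is the step in (iii) where Frol\'ik's theorem is used to replace the (possibly infinitely many) $\sigma$-compact ``building blocks'' of a Frol\'ik space by a single Polish space reached by a perfect map: without that replacement one would be stuck, since a countable product of $\sigma$-compact (hence productively Lindel\"of) spaces---for instance $\omega^\omega$ itself---need not be productively Lindel\"of. The two closure lemmas (i) and (ii), together with the classical facts that products of perfect maps are perfect, that perfect preimages of Lindel\"of spaces are Lindel\"of, that paracompact \v{C}ech-complete spaces are perfect preimages of completely metrizable spaces, and that nonempty Polish spaces are continuous images of $\mathbb{P}$, are all routine; the content of the proposition lies entirely in arranging them around Frol\'ik's theorem and around the tautology that ``there is no Michael space'' is the same as ``$\mathbb{P}$ is productively Lindel\"of.''
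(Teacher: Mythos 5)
Your argument is correct. Note that for this Proposition the paper itself gives no proof at all -- it is quoted from \cite{BT}, and the only in-text proof is the one-line remark ``$\mathbb{P}$ is K-Lusin,'' which justifies adding ``every K-Lusin space is productively Lindel\"of'' as a further equivalent. So what you have done is reconstruct the content that is outsourced to \cite{BT}, and your reconstruction is the natural one: the tautology that ``no Michael space'' means $\mathbb{P}$ is productively Lindel\"of; the two closure facts (continuous images and perfect preimages of productively Lindel\"of spaces are productively Lindel\"of, via $f\times\mathrm{id}_Y$ and the standard facts that products of perfect maps are perfect and perfect preimages of Lindel\"of spaces are Lindel\"of); the Frol\'ik--Engelking theorem that a Lindel\"of (hence paracompact) \v{C}ech-complete space maps perfectly onto a completely metrizable, hence Polish, space; and Frol\'ik's structure theorem, quoted in Section 3, that every Frol\'ik space is a continuous image of a \v{C}ech-complete Frol\'ik space. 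Each step checks out, and you correctly flag the one genuine pitfall: one cannot argue directly from the definition of Frol\'ik (closed subspace of a countable product of $\sigma$-compact spaces), since $\omega^\omega$ itself shows such products need not be productively Lindel\"of, which is exactly why the structure theorem is needed. Your route also makes the paper's addendum transparent, since K-Lusin spaces are by definition (injective) continuous images of Lindel\"of \v{C}ech-complete spaces and so fall under your steps (i)--(ii), while $\mathbb{P}$ is K-Lusin for the converse.
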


	We could add to this ``if and only if every K-Lusin space is productively Lindel\"of''.
\begin{proof}
	$\mathbb{P}$ is K-Lusin.
\end{proof}

Also of interest is:

\begin{prop}[ {\cite[2.5.5]{RJ}}]
	K-analytic spaces are \textit{powerfully Lindel\"of}, i.e. their countable powers are Lindel\"of -- in fact they are K-analytic.
\end{prop}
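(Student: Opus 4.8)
The plan is to establish the stronger statement that if $X$ is $K$-analytic, then $X^{\omega}$ is $K$-analytic; every finite power $X^{n}$ is then the image of $X^{\omega}$ under the projection onto the first $n$ coordinates, hence also $K$-analytic, and since $K$-analytic spaces are continuous images of Lindel\"of spaces they are Lindel\"of, so ``powerfully Lindel\"of'' follows. So fix a Lindel\"of \v{C}ech-complete space $Y$ together with a continuous surjection $g\colon Y\to X$, and note that $g^{\omega}\colon Y^{\omega}\to X^{\omega}$ is a continuous surjection; it therefore suffices to prove that $Y^{\omega}$ is Lindel\"of and \v{C}ech-complete.

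That $Y^{\omega}$ is \v{C}ech-complete is immediate: $Y$ is dense in $\beta Y$ and equals $\bigcap_{n}U_{n}$ for some open $U_{n}\subseteq\beta Y$, so $Y^{\omega}$ is dense in the compact space $(\beta Y)^{\omega}$ and equals the intersection of the countably many open sets $\pi_{m}^{-1}(U_{n})$, $m,n<\omega$; hence $Y^{\omega}$ is a $G_{\delta}$ in a compactification of itself. The substantive difficulty is that Lindel\"ofness is not productive, so one cannot conclude Lindel\"ofness of $Y^{\omega}$ directly. To get around this I would use the classical description of Lindel\"of \v{C}ech-complete spaces as exactly the perfect preimages of Polish spaces: a Lindel\"of space is paracompact, a paracompact \v{C}ech-complete space admits a perfect map onto a completely metrizable space (\cite{E}), and the image of a Lindel\"of space under such a map is Lindel\"of and metrizable, hence separable completely metrizable, i.e.\ Polish. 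Fix such a perfect surjection $f\colon Y\to Z$ with $Z$ Polish.

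Now $f^{\omega}\colon Y^{\omega}\to Z^{\omega}$ is perfect, because arbitrary products of perfect maps are perfect (\cite{E}); $Z^{\omega}$ is Polish, hence Lindel\"of; and perfect preimages of Lindel\"of spaces are Lindel\"of (\cite{E}). Thus $Y^{\omega}$ is Lindel\"of, and together with the preceding paragraph it is Lindel\"of \v{C}ech-complete, whence $X^{\omega}=g^{\omega}(Y^{\omega})$ is $K$-analytic, as required. I expect the final write-up to follow exactly these lines; the only step needing genuine care is the passage through perfect preimages of Polish spaces, which is precisely what trades the badly-behaved property ``Lindel\"of'' for one that is stable under countable products.
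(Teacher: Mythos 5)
Your argument is correct, but note that the paper does not prove this proposition at all: it is quoted from Rogers and Jayne \cite[2.5.5]{RJ}, whose treatment runs through the representation of K-analytic spaces by upper semi-continuous compact-valued maps indexed by $\omega^\omega$ (so that the countable power is handled by $(\omega^\omega)^\omega\cong\omega^\omega$). What you give instead is a self-contained proof from the definition used in this paper: you reduce everything to showing that the class of Lindel\"of \v{C}ech-complete spaces is closed under countable products, and you get Lindel\"ofness of $Y^\omega$ by passing through Frol\'ik's theorem that a paracompact (here Lindel\"of, hence paracompact) \v{C}ech-complete space maps perfectly onto a completely metrizable space, combined with the facts that products of perfect maps are perfect, that $Z^\omega$ is Polish, and that perfect preimages of Lindel\"of spaces are Lindel\"of; \v{C}ech-completeness of $Y^\omega$ is the standard $G_\delta$ computation in $(\beta Y)^\omega$. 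All the ingredients you invoke are genuine classical theorems, and each step checks out (including the reduction of the finite powers to projections of $X^\omega$). It is worth observing that your key intermediate step is exactly the statement that Lindel\"of \v{C}ech-complete spaces are Lindel\"of p-spaces and that Lindel\"of p is countably productive, a fact the paper itself uses elsewhere (in the proof of Theorem \ref{lem39}, citing \cite{Ar1}); so your route fits naturally into the paper's own machinery, whereas the cited Rogers--Jayne proof is shorter if one already has the usco-map characterization of K-analyticity.
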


\begin{thm}\label{thm32}
	Co-analytic Menger K-analytic spaces are $\sigma$-compact.
\end{thm}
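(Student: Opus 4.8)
The plan is to reduce to {Arhangel'ski\u\i}'s theorem \cite{Ar2} that Menger analytic spaces are $\sigma$-compact, by showing that $X$ is a perfect pre-image of a separable metrizable space. Suppose $p$ maps $X$ perfectly onto a separable metrizable space $M$. Then $M$, being a continuous image of $X$, is Menger (the Menger property is preserved by continuous images) and $K$-analytic (continuous images of $K$-analytic spaces are $K$-analytic); being metrizable, $M$ is therefore analytic by \cite[5.5.1]{RJ}. Hence $M$ is analytic and Menger, so $M$ is $\sigma$-compact by \cite{Ar2}. Writing $M=\bigcup_{n<\omega}K_n$ with each $K_n$ compact and recalling that preimages of compact sets under perfect maps are compact, we get $X=\bigcup_{n<\omega}p^{-1}(K_n)$, so $X$ is $\sigma$-compact.

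It remains to exhibit the perfect map, equivalently to show that $X$ is a Lindel\"of $p$-space, and this is where all three hypotheses are used. Being Menger, $X$ is Lindel\"of. Being co-analytic, $\beta X\setminus X$ is analytic, hence Lindel\"of, so by Lemma \ref{lem5} the space $X$ is of countable type, in particular of point-countable type. Being $K$-analytic, $X$ is a Lindel\"of $\Sigma$-space. I would then appeal to the known fact that a Lindel\"of $\Sigma$-space of point-countable type is a Lindel\"of $p$-space --- equivalently, that a $K$-analytic space of countable type is a perfect pre-image of a separable metrizable space --- together with {Arhangel'ski\u\i}'s characterization of Lindel\"of $p$-spaces as exactly the perfect pre-images of separable metrizable spaces.

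I expect this last identification to be the main point to nail down, as it is the only place where co-analyticity (through the Lindel\"of remainder of Lemma \ref{lem5}) is combined with $K$-analyticity, and it is what defeats the obvious would-be counterexamples. Both hypotheses are genuinely needed: without $K$-analyticity one is asking whether co-analytic Menger spaces are $\sigma$-compact, which is not known even to be consistent (Problem \ref{prob2}); and without co-analyticity one has, assuming a Michael space, Okunev's space --- a Menger $K$-analytic continuous image of a co-analytic space that is not $\sigma$-compact, and which is moreover Frol\'ik --- so no argument of the form ``Menger Frol\'ik spaces are $\sigma$-compact'' can work, and what saves Theorem \ref{thm32} is precisely that Okunev's space fails to be of countable type, hence is not a perfect pre-image of a separable metrizable space and indeed not co-analytic. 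In writing this up one should make sure that ``$K$-analytic spaces are Lindel\"of $\Sigma$-spaces'' and ``Lindel\"of $\Sigma$-spaces of point-countable type are Lindel\"of $p$-spaces'' are quoted in forms matching the standing completely-regular convention; the remaining ingredients --- preservation of the Menger and $K$-analytic properties under continuous images, and the behavior of perfect maps on compact sets --- are routine.
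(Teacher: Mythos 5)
Your overall reduction is exactly the paper's: show $X$ is a Lindel\"of $p$-space, map it perfectly onto a separable metrizable $M$, note $M$ is Menger and $K$-analytic metrizable hence analytic \cite[5.5.1]{RJ}, apply {Arhangel'ski\u\i}'s theorem to get $M$ $\sigma$-compact, and pull back along the perfect map. All of that part is fine. The gap is precisely at the step you yourself flag as ``the main point to nail down'': the claim that a Lindel\"of $\Sigma$-space of (point-)countable type is a Lindel\"of $p$-space. That is not one of the quoted results, and it does not follow from them. From co-analyticity you only extract that the remainder is Lindel\"of (Lemma \ref{lem5}, countable type); but the machinery available (Lemma \ref{lem37} together with the lemma that $X$ is Lindel\"of $\Sigma$ iff its remainder is an s-space, and that $X$ is Lindel\"of $p$ iff its remainder is) converts ``$X$ is Lindel\"of $p$'' into ``the remainder is Lindel\"of $\Sigma$,'' not merely ``the remainder is Lindel\"of.'' Indeed, your claimed fact is essentially equivalent to the assertion that every Lindel\"of s-space is a Lindel\"of $p$-space (equivalently, that a Lindel\"of remainder of a Lindel\"of $\Sigma$-space is Lindel\"of $\Sigma$), which is nowhere asserted in \cite{Ar4} or elsewhere in the paper's sources; if it were available, Theorem \ref{thm32} would immediately strengthen to ``Menger $K$-analytic spaces of countable type are $\sigma$-compact,'' and the delicate argument for (f)$\Rightarrow$(b) in Theorem \ref{thm318} would be unnecessary --- a strong sign that you cannot simply quote it as known. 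So, as written, the proof has a genuine hole unless you supply a proof or a precise reference for that implication.

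The repair is to use the full strength of co-analyticity, as the paper does: $\beta X\setminus X$ is analytic, hence has a countable network, hence is Lindel\"of $\Sigma$ (not just Lindel\"of). Then: $X$ is Lindel\"of $\Sigma$ because it is $K$-analytic; since the remainder is Lindel\"of $\Sigma$, $X$ is an s-space; by Lemma \ref{lem37}, $X$ is Lindel\"of $p$. From there your argument (perfect map onto $M$, $M$ analytic and Menger, hence $\sigma$-compact, hence $X$ $\sigma$-compact) is correct and coincides with the paper's proof.
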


\begin{cor}
	Suppose there is a Michael space. Then co-analytic productively Lindel\"of K-analytic spaces are $\sigma$-compact.
\end{cor}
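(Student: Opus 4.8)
The plan is to exhibit $X$ as a perfect pre-image of a $\sigma$-compact separable metrizable space. First I would record what the hypotheses give. Since $X$ is co-analytic, $\beta X\setminus X$ is analytic, hence Lindel\"of, so $X$ is of countable type by Lemma~\ref{lem5}. Since $X$ is Menger it is Lindel\"of, and since $X$ is K-analytic it is a continuous image of a Lindel\"of \v{C}ech-complete space, hence a Lindel\"of $\Sigma$-space (so, in particular, a $\Sigma$-space).

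The core step is to upgrade ``$\Sigma$-space of countable type'' to ``perfect pre-image of a separable metrizable space''. A $p$-space is necessarily of countable type, and the converse holds for $\Sigma$-spaces: a $\Sigma$-space of point-countable type is an $M$-space, so $X$, being also Lindel\"of and hence paracompact, is a paracompact $p$-space, which by Arhangel'ski\u\i's characterization is exactly a perfect pre-image of a metrizable space; Lindel\"ofness of $X$ forces the metrizable target to be separable. So I would fix a perfect surjection $f:X\to M$ with $M$ separable metrizable. This is the non-group analogue of Lemma~\ref{lem2}, and it is the step I expect to be the main obstacle: one must manufacture the feathering of $X$ in $\beta X$ (equivalently, the perfect map $f$) out of the Lindel\"of $\Sigma$ structure, using countable type to keep the relevant fibres compact and of countable character.

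Granting $f:X\to M$, the remainder is routine. As a continuous image of the Menger K-analytic space $X$, the space $M$ is metrizable, K-analytic, and Menger; metrizable K-analytic spaces are analytic (\cite[5.5.1]{RJ}, as used in the proof of Lemma~\ref{lemA}), so $M$ is analytic and Menger and hence $\sigma$-compact by Arhangel'ski\u\i's theorem \cite{Ar2} (equivalently, since Menger K-analytic spaces are projectively $\sigma$-compact, $M$ is $\sigma$-compact as a continuous image of $X$ in a metrizable space). Write $M=\bigcup_{n<\omega}K_n$ with each $K_n$ compact. By Lemma~\ref{lemB} each restriction $f_{K_n}:f^{-1}(K_n)\to K_n$ is perfect, so $f^{-1}(K_n)$ is compact, whence $X=\bigcup_{n<\omega}f^{-1}(K_n)$ is $\sigma$-compact, proving Theorem~\ref{thm32}. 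The Corollary is then immediate: by Lemma~\ref{lem9} a Michael space makes every productively Lindel\"of space Menger, so a co-analytic, productively Lindel\"of, K-analytic space is co-analytic, Menger, and K-analytic, hence $\sigma$-compact.
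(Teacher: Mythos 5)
Your derivation of the Corollary itself is the paper's: with a Michael space, productively Lindel\"of implies Menger (Lemma~\ref{lem9}), and then Theorem~\ref{thm32} finishes. The gap is inside your proof of Theorem~\ref{thm32}, at exactly the step you flag as the main obstacle: the assertion that a $\Sigma$-space of point-countable type is an $M$-space (equivalently, in the Lindel\"of setting, that a Lindel\"of $\Sigma$-space of countable type is a Lindel\"of $p$-space) is given no proof and no reference, and it is strictly stronger than what the quoted machinery provides. Lemma~\ref{lem37} characterizes Lindel\"of $p$ as Lindel\"of $\Sigma$ \emph{plus s-space}, and being an s-space is, by the duality in \cite{Ar4}, a demand that the remainder be a Lindel\"of $\Sigma$-space --- not merely Lindel\"of, which is all that countable type gives via Lemma~\ref{lem5}. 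The standard $M$-space theorem pairs $\Sigma$ with $w\Delta$, not with point-countable type, so the literature cannot simply be cited either. That the author does not have your implication available is visible in Theorem~\ref{thm318}: to pass from ``absolute Borel and of countable type'' to Lindel\"of $p$ he must exploit the Borel structure of the remainder to make it K-analytic (hence Lindel\"of $\Sigma$); if ``Lindel\"of $\Sigma$ plus countable type implies $p$'' were known, that detour would be pointless. So as written, the existence of the perfect map $f\colon X\to M$ is not established.

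The repair is to stop discarding information: you weakened ``$\beta X\setminus X$ is analytic'' to ``$\beta X\setminus X$ is Lindel\"of.'' Keep instead that the remainder, being analytic, has a countable network and is therefore a Lindel\"of $\Sigma$-space; since $X$ is K-analytic it is also Lindel\"of $\Sigma$, so by {Arhangel'ski\u\i}'s quoted results (the remainder being Lindel\"of $\Sigma$ makes $X$ an s-space, and then Lemma~\ref{lem37} applies; equivalently, use that $X$ is Lindel\"of $p$ if and only if its remainder is) $X$ is a Lindel\"of $p$-space. This is precisely the paper's proof of Theorem~\ref{thm32}. Your back end is fine and coincides with the paper: $M$ is a metrizable continuous image of $X$, hence K-analytic and Menger, hence analytic and so $\sigma$-compact by \cite{Ar2}, and pulling back a countable compact cover along the perfect map (Lemma~\ref{lemB}) makes $X$ $\sigma$-compact; the deduction of the Corollary from Theorem~\ref{thm32} and Lemma~\ref{lem9} is then exactly the paper's.
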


Compare with \ref{prop8}.

The Corollary follows from \ref{lem9}. In order to prove \ref{thm32} we need to know:

\begin{defn}[{\cite{Ar4}}]
	A completely regular space is called an \textbf{s-space} if there exists a \textbf{countable open source} for $X$ in some compactification $bX$ of $X$, i.e. a countable collection $\mathcal{S}$ of open subsets of $bX$ such that $X$ is a union of some family of intersections of non-empty subfamilies of $\mathcal{S}$.
\end{defn}

We also need to know about \textbf{p-spaces} and \textbf{$\bsigma$-spaces}, but do not need their internal characterizations. What we need are:

\begin{lem}[{ \cite{Ar1}}]
	A completely regular space is \emph{\textbf{Lindel\"of {p}}} if it is the perfect pre-image of a separable metrizable space.
\end{lem}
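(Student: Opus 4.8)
I read this as the characterization that a completely regular space $X$ is a Lindel\"of $p$-space \emph{if and only if} it is a perfect pre-image of a separable metrizable space, and the plan is to prove it from {Arhangel'ski\u\i}'s internal (feathering) definition: $X$ is a $p$-space when in some (equivalently every) compactification $bX$ there is a sequence $\{\mc{U}_n : n<\w\}$ of families of open subsets of $bX$, each covering $X$, with $\bigcap_{n<\w}\op{St}(x,\mc{U}_n)\subseteq X$ for every $x\in X$, where $\op{St}(x,\mc{U}_n)=\bigcup\{U\in\mc{U}_n : x\in U\}$. The substance is entirely in the forward direction; the converse is soft.

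For the converse, suppose $f\colon X\to M$ is perfect with $M$ separable metrizable. Separable metrizable spaces are Lindel\"of and perfect pre-images of Lindel\"of spaces are Lindel\"of (compact fibers, $f$ closed), so $X$ is Lindel\"of. That $X$ is a $p$-space I would get from two standard facts: metrizable spaces are $p$-spaces (the covers by $1/n$-balls form a development, which supplies the feathering), and the class of $p$-spaces is closed under perfect pre-images (pull the feathering of $M$ back through the extension of $f$ to $\beta X\to\beta M$, using compactness of the fibers to verify the star condition). Hence $X$ is a Lindel\"of $p$-space.

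For the forward direction, let $X$ be a Lindel\"of $p$-space with feathering $\{\mc{U}_n\}$ in $\beta X$. First I would use Lindel\"ofness to take each $\mc{U}_n$ countable, and then use paracompactness of $X$ to refine $\{\mc{U}_n\}$ to a star-refining sequence (star-refine the traces on $X$, then thicken back to open sets of $\beta X$); passing to subfamilies only strengthens the star condition. For $x\in X$ put $P(x)=\bigcap_{n<\w}\op{St}(x,\mc{U}_n)$. Star-refinement makes $x\sim y\iff y\in P(x)$ an equivalence relation partitioning $X$ into the sets $P(x)$; the feathering gives $P(x)\subseteq X$, while star-refinement gives $P(x)=\bigcap_n\ov{\op{St}(x,\mc{U}_n)}^{\beta X}$, so each $P(x)$ is compact. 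Let $q\colon X\to M:=X/\mathnot$ be the quotient map. Because the $\mc{U}_n$ are countable, the induced covers $q(\mc{U}_n)$ are countable, and I would metrize $M$ from them by a Frink/Alexandroff--Urysohn argument, obtaining $M$ separable metrizable.

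The main obstacle is showing that $q$ is \emph{perfect} and that $M$ is genuinely metrizable, and both rest on one point: that $\{\op{St}(P(x),\mc{U}_n) : n<\w\}$ is a neighborhood base of the compact set $P(x)$ in $X$. Granting this, the fibers $q^{-1}(q(x))=P(x)$ are compact, and $q$ is closed, because for $C$ closed in $X$ the saturation $q^{-1}(q(C))=\bigcup_{x\in C}P(x)$ is closed: if $z\notin q^{-1}(q(C))$ then $P(z)$ misses $C$, so by the base property some $\op{St}(P(z),\mc{U}_n)$ misses $C$, separating $z$ from $q^{-1}(q(C))$. The same neighborhood-base property is exactly what feeds the metrization of $M$. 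I would establish the base property from the standard star-refinement estimate $\op{St}(P(x),\mc{U}_{n+1})\subseteq\op{St}(x,\mc{U}_n)$ together with compactness of $P(x)$. Then $q$ is perfect onto the separable metrizable $M$, completing the forward direction; alternatively one can package the verification through the extension $\ov q\colon\beta X\to bM$ and the identity $\ov q^{-1}(q(X))=X$, which is precisely the dense-perfect bookkeeping underlying Lemma~\ref{lemC}.
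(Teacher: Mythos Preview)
The paper does not give its own proof of this lemma; it is quoted from {Arhangel'ski\u\i} \cite{Ar1} as a known characterization and used as a black box. So there is no in-paper argument to compare against.

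That said, your reading of the statement as a biconditional is the intended one (the paper uses both directions), and your sketch is the standard {Arhangel'ski\u\i} argument. The converse is routine, as you say. In the forward direction the outline is right: thin each pluming family to a countable one by Lindel\"ofness, arrange inductively that $\mc{U}_{n+1}$ star-refines $\mc{U}_n$ (here one really constructs a new sequence level by level, alternating ``take a countable subcover'' with ``take a star-refinement in $X$ and re-expand in $\beta X$''; your phrasing ``then thicken back'' hides exactly this inductive bookkeeping), set $P(x)=\bigcap_n\op{St}(x,\mc{U}_n)$, and verify that the $P(x)$'s are compact, partition $X$, and that $\{\op{St}(P(x),\mc{U}_n):n<\w\}$ is a base at $P(x)$. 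From that base property both the closedness of the quotient map and the metrizability of the quotient (via a Moore/Alexandroff--Urysohn criterion, since the pushed-down covers form a countable development) follow. The one place I would ask you to write more is the metrization of $M$: you invoke ``Frink/Alexandroff--Urysohn'' but do not say which hypothesis you are verifying; the cleanest route is to note that the images of the $\mc{U}_n$ form a development of the (regular) quotient, so Bing--Nagata--Smirnov or the Moore metrization theorem applies and separability comes from countability of the covers.
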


\begin{lem}[{ \cite{Naga}}]
	A completely regular space is \textbf{\emph{Lindel\"of $\bsigma$}} if and only if it is the continuous image of a Lindel\"of p-space.
\end{lem}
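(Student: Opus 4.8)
The plan is to work from Nagami's internal definition, since the paper deliberately avoids it: a completely regular space is a $\bsigma$-space if it carries a $\sigma$-locally finite network modulo a compact cover, i.e.\ there are a cover $\mc C$ by compact sets and a family $\mc N=\bigcup_n\mc N_n$, each $\mc N_n$ locally finite, so that for every $C\in\mc C$ and every open $U\supseteq C$ some $N\in\mc N$ has $C\subseteq N\subseteq U$. In a \emph{Lindel\"of} $\bsigma$-space each $\mc N_n$, being locally finite in a Lindel\"of space, is countable, so $\mc N$ may be taken countable; I shall call such an $\mc N$ a \textbf{countable compact network}. By the preceding lemma I read ``Lindel\"of $p$'' as ``perfect pre-image of a separable metrizable space.''

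First I would handle the easy direction $(\Leftarrow)$. A Lindel\"of $p$-space $Z$, say $g:Z\to M$ perfect onto separable metrizable $M$ with countable base $\{B_n\}$, is Lindel\"of $\bsigma$: the fibres $\{g^{-1}(m):m\in M\}$ form a compact cover and $\{g^{-1}(B_n):n<\w\}$ is a countable compact network, since for $g^{-1}(m)\subseteq U$ closedness of $g$ supplies a basic $B_n\ni m$ with $g^{-1}(B_n)\subseteq U$. Next, if $f:Z\to X$ is continuous onto with $Z$ Lindel\"of $\bsigma$, then $X$ is Lindel\"of $\bsigma$: $\{f(C):C\in\mc C\}$ is a compact cover, and $\{f(N):N\in\mc N\}$ is a countable compact network, because $f(C)\subseteq V$ gives $C\subseteq f^{-1}(V)$, hence some $N\in\mc N$ with $C\subseteq N\subseteq f^{-1}(V)$ and so $f(C)\subseteq f(N)\subseteq V$; that $X$ is Lindel\"of is automatic. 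These two observations together dispose of $(\Leftarrow)$.

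The substantial direction is $(\Rightarrow)$: from a Lindel\"of $\bsigma$-space $X$ I must manufacture a Lindel\"of $p$-space mapping continuously onto it. I would fix a compact cover $\mc C$ and a countable compact network $\mc N=\{N_n:n<\w\}$ closed under finite intersections. Passing to $bX=\beta X$, put $C_n=\mr{cl}_{bX}(N_n)$ and, for $\sigma\in\w^\w$, $K_\sigma=\bigcap_k C_{\sigma(k)}$. Let $M$ be the set of $\sigma\in\w^\w$ with $\emptyset\neq K_\sigma\subseteq X$ for which $\{N_{\sigma(k)}\}_k$ is a network at $K_\sigma$; in the subspace topology $M$ is separable metrizable. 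Set $Z=\{(x,\sigma):\sigma\in M,\ x\in K_\sigma\}\subseteq X\times M$. The projection $g:Z\to M$ has the compact fibres $K_\sigma$, and the projection $\pi:Z\to X$ is continuous; it remains to show $g$ is perfect and $\pi$ is onto. For perfectness I would view $Z$ as $\{(p,\sigma)\in bX\times M:p\in K_\sigma\}$ (the same set, since $K_\sigma\subseteq X$ for $\sigma\in M$), which is closed in $bX\times M$; as $bX$ is compact the projection $bX\times M\to M$ is perfect, and a perfect map restricted to a closed subspace stays perfect, so $g$ is perfect and $Z$, lying over $M$, is a Lindel\"of $p$-space.

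The hard part is ontoness, and specifically the verification that the $bX$-closures collapse correctly. Given $x\in X$ I pick $C\in\mc C$ with $x\in C$ and, using that $\mc N$ is a network modulo $\mc C$ closed under finite intersection, choose $\sigma$ with $\{N_{\sigma(k)}\}_k$ a decreasing network at $C$. The decisive step is to check $K_\sigma=C\subseteq X$: the inclusion $C\subseteq K_\sigma$ is clear, while for $p\in bX\setminus C$ normality of the compact Hausdorff space $bX$ gives disjoint open $W\supseteq C$ and $W'\ni p$ with $\mr{cl}(W)\cap W'=\emptyset$; then $W\cap X$ is an open neighbourhood of $C$ in $X$, so some $N_{\sigma(k)}\subseteq W\cap X\subseteq W$, whence $C_{\sigma(k)}\subseteq\mr{cl}(W)$ and $p\notin K_\sigma$. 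Thus $\sigma\in M$, $(x,\sigma)\in Z$, and $\pi(x,\sigma)=x$, so $\pi$ is onto and $X$ is a continuous image of the Lindel\"of $p$-space $Z$. I expect this last separation argument — guaranteeing that $\bigcap_k\mr{cl}_{bX}(N_{\sigma(k)})$ shrinks exactly to the compact set and picks up no points of the remainder $bX\setminus X$ — to be the main obstacle; everything else is bookkeeping with perfect maps.
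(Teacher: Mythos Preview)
The paper does not prove this lemma at all: it is quoted as a result of Nagami \cite{Naga} and used as a black box, so there is no ``paper's own proof'' to compare against. What you have supplied is a complete, self-contained proof of the cited result.

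Your argument is correct and is essentially the standard one. The $(\Leftarrow)$ direction is routine. For $(\Rightarrow)$ you build the usual fibred space $Z\subseteq\beta X\times\omega^\omega$ over the index set $M\subseteq\omega^\omega$ of ``good'' sequences, check that $Z$ is closed in $\beta X\times M$ so that the second projection is perfect onto the separable metrizable $M$, and then verify ontoness of the first projection by showing that for each $C\in\mc C$ a decreasing network $\{N_{\sigma(k)}\}$ at $C$ satisfies $\bigcap_k\mr{cl}_{\beta X}(N_{\sigma(k)})=C$. Your separation argument for this last equality, via normality of $\beta X$, is exactly the point where the compactness of $C$ and the network property are used, and it goes through as written. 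The only cosmetic remark is that the extra clause ``$\{N_{\sigma(k)}\}_k$ is a network at $K_\sigma$'' in your definition of $M$ is not needed for the argument (all you use about $M$ is that it is a subspace of $\omega^\omega$ and that the particular $\sigma$'s you construct lie in it), but including it does no harm.
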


\begin{lem}
	An analytic space has a countable network and hence (see e.g \cite{G}) is Lindel\"of and a $\bsigma$-space.
\end{lem}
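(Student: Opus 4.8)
The plan is to trace everything back to the definition: an analytic space $X$ is a continuous image of the space $\mathbb{P}$ of irrationals, say under a continuous surjection $f\colon\mathbb{P}\to X$. Since $\mathbb{P}$ is separable metrizable it has a countable base $\mathcal{B}$, and a base is in particular a network. I would then verify the standard fact that networks (unlike bases) push forward under continuous surjections: the countable family $\{f(B):B\in\mathcal{B}\}$ is a network for $X$. Indeed, if $x\in U$ with $U$ open in $X$, choose $p\in f^{-1}(x)$; then $f^{-1}(U)$ is an open neighbourhood of $p$, so there is $B\in\mathcal{B}$ with $p\in B\subseteq f^{-1}(U)$, whence $x\in f(B)\subseteq U$. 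Thus $X$ has a countable network.

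The remaining two conclusions are then citations to \cite{G}, or routine verifications: a space with a countable network is (hereditarily) Lindel\"of --- given an open cover, for each network element that is contained in some member of the cover pick one such member, obtaining a countable subcover --- and it is a $\sigma$-space, since a countable network is trivially $\sigma$-discrete, hence a $\bsigma$-space, since every $\sigma$-space is a $\Sigma$-space.

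I do not anticipate a genuine obstacle here; the only point requiring a moment's care is that one must work with networks rather than bases, precisely because continuous surjections generally destroy the ``base'' property but preserve the weaker ``network'' property. (Alternatively, one could bypass the network language for the $\bsigma$-space assertion altogether: $\mathbb{P}$, being separable metrizable, is a perfect pre-image of itself via the identity and hence Lindel\"of p, so an analytic space, being a continuous image of a Lindel\"of p-space, is Lindel\"of $\bsigma$ by the Nagata-type lemma stated just above.)
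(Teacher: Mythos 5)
Your proof is correct, and it follows the standard route that the paper itself leaves to the citation of \cite{G}: push the countable base of $\mathbb{P}$ forward through the continuous surjection to get a countable network, then use the routine facts that a countable network yields (hereditary) Lindel\"ofness and the $\bsigma$-property. Your parenthetical alternative --- $\mathbb{P}$ is Lindel\"of $p$, so its continuous image is Lindel\"of $\bsigma$ by the Nagata lemma quoted in the paper --- is equally valid and fits the surrounding machinery, but no genuinely different ideas are involved.
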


\begin{lem}[{ \cite{Ar4}}]\label{lem37}
	$X$ is a Lindel\"of p-space if and only if it is a Lindel\"of $\bsigma$-space and an s-space.
\end{lem}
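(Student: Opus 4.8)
The plan is to prove the two implications separately, leaning throughout on the just‑stated characterization ``Lindel\"of p $=$ perfect pre‑image of a separable metrizable space.'' For $(\Rightarrow)$, let $X$ be Lindel\"of p, witnessed by a perfect map $q\colon X\to M$ with $M$ separable metrizable. That $X$ is a Lindel\"of $\bsigma$-space is immediate, since $X$ is the identity image of the Lindel\"of p-space $X$ and Nagami's lemma applies. To produce a countable open source I would fix a metrizable compactification $bM$ of $M$ with a countable base $\{B_k:k\in\omega\}$, extend $q$ through $\beta X\to\beta M\to bM$ to a continuous surjection $Q\colon\beta X\to bM$ with $Q|_X=q$, and apply Lemma \ref{lemC} with the dense subspace $X$ (on which $Q$ restricts to the perfect $q$) to get $Q^{-1}(M)=X$. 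Since $bM$ is metrizable, $\{m\}=\bigcap\{B_k:m\in B_k\}$ for each $m\in M$, so $X=Q^{-1}(M)=\bigcup_{m\in M}\bigcap\{Q^{-1}(B_k):m\in B_k\}$, and $\{Q^{-1}(B_k):k\in\omega\}$ is the desired source in $\beta X$.

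For $(\Leftarrow)$, suppose $X$ is Lindel\"of $\bsigma$ and has a countable open source $\mathcal{S}=\{S_n:n\in\omega\}$ in a compactification $bX$. First I would normalize $\mathcal{S}$: closing it under finite unions and replacing each point's witnessing subfamily by the set of \emph{all} source members through that point, we may assume $N(x):=\bigcap\{S_n:x\in S_n\}\subseteq X$ for every $x\in X$ and that each $x$ lies in some $S_n$. The key combinatorial point is then: for every compact $C\subseteq X$, $L_C:=\bigcap\{S_n:C\subseteq S_n\}\subseteq X$ (and $\{S_n:C\subseteq S_n\}\neq\emptyset$). Indeed, if $p\in L_C\setminus X$, then for each $y\in C$ we have $p\notin N(y)$, so some $S_{n_y}$ contains $y$ but not $p$; a finite subcover of $C$ yields, using closure under finite unions, a single source member containing $C$ yet omitting $p$, contradicting $p\in L_C$. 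Next I invoke the $\Sigma$-space (Nagami) characterization of ``Lindel\"of $\bsigma$'': there is a compact cover $\mathcal{C}$ of $X$ and a countable family $\mathcal{N}$ of subsets of $X$ — which, by a standard regularity shrink, may be taken closed in $X$ — such that every $C\in\mathcal{C}$ and open $U\supseteq C$ admit $N\in\mathcal{N}$ with $C\subseteq N\subseteq U$.

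Now for each $N\in\mathcal{N}$, $\overline N^{bX}$ is compact; whenever $\overline N^{bX}\subseteq S_m$ pick a cozero set $W^N_m$ of $bX$ with $\overline N^{bX}\subseteq W^N_m\subseteq S_m$, and let $\mathcal{W}=\{W^N_m:N\in\mathcal{N},\ \overline N^{bX}\subseteq S_m\}$, a countable family of cozero subsets of $bX$. I claim $\mathcal{W}$ separates $bX\setminus X$ from $X$: given $x\in X$ and $p\in bX\setminus X$, choose $C\in\mathcal{C}$ with $x\in C$; since $p\notin L_C$ by the sublemma, some $S_n\supseteq C$ omits $p$; by regularity of $bX$ pick open $V$ with $C\subseteq V\subseteq\overline V^{bX}\subseteq S_n$, then $N\in\mathcal{N}$ with $C\subseteq N\subseteq V\cap X$, so $\overline N^{bX}\subseteq S_n$ and hence $x\in\overline N^{bX}\subseteq W^N_n\subseteq S_n$ with $p\notin W^N_n$. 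Finally, writing each $W\in\mathcal{W}$ as $\{h_W>0\}$ for continuous $h_W\colon bX\to[0,1]$, the map $H=(h_W)_{W\in\mathcal{W}}\colon bX\to[0,1]^\omega$ satisfies $H^{-1}(H(X))=X$ by the separation property, whence a standard argument in the spirit of Lemma \ref{lemC} shows $q:=H|_X\colon X\to H(X)$ is perfect; thus $X$ is a perfect pre-image of the separable metrizable space $H(X)$, i.e.\ Lindel\"of p.

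The step I expect to be the crux is precisely the passage from the merely‑open source $\mathcal{S}$ to the countable \emph{cozero} family $\mathcal{W}$. The naive idea of writing each $S_n$ as a union of cozero sets of $bX$ fails because that union may be uncountable: for instance $\omega_1\subseteq[0,\omega_1]$ has the one-element open source $\{[0,\omega_1)\}$ yet, not being Lindel\"of, admits no countable cozero source. The Lindel\"of $\bsigma$ hypothesis is exactly what restores countability — it lets one index the needed cozero sets by the countable network $\mathcal{N}$ rather than by the possibly uncountable compact cover $\mathcal{C}$ — and the delicate part is checking that the resulting $\mathcal{W}$ still separates the remainder from $X$, which is where $\mathcal{N}$, $\mathcal{C}$ and $\mathcal{S}$ must be coordinated through the sublemma.
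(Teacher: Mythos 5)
Your proof is correct, but note that the paper itself offers no argument to compare against: Lemma \ref{lem37} is simply quoted from Arhangel'ski\u\i~\cite{Ar4}, so what you have produced is a self-contained substitute for the cited result. Your forward direction (pull back a countable base of a metrizable compactification of $M$ along the extension $Q:\beta X\to bM$ of the perfect map, using Lemma \ref{lemC} to get $Q^{-1}(M)=X$, so that the sets $Q^{-1}(B_k)$ form a countable open source) and your backward direction (normalize the source so that $N(x)\subseteq X$, prove the sublemma that $\bigcap\{S_n: C\subseteq S_n\}\subseteq X$ for compact $C\subseteq X$ via a finite subcover and closure under finite unions, and then coordinate this with a countable closed network modulo a compact cover to produce a countable cozero family of $bX$ separating $X$ from $bX\setminus X$, whence the diagonal map $H$ into $[0,1]^\omega$ satisfies $H^{-1}(H(X))=X$ and restricts to a perfect map onto a separable metrizable image) are both sound. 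Two small remarks: the imported ingredient in the converse is Nagami's internal characterization of Lindel\"of $\bsigma$-spaces (countable network modulo a compact cover), which is external to the paper's definition of Lindel\"of $\bsigma$ as a continuous image of a Lindel\"of p-space, so strictly speaking you should either cite it or derive it from that definition (it is routine: take the images of the fibers of the perfect map as the compact cover and the images of preimages of closures of basic open sets as the network); and the perfectness of $H|_X$ at the end is exactly Lemma \ref{lemB} applied to the perfect map $H:bX\to H(bX)$ and the saturated set $X=H^{-1}(H(X))$, rather than an argument ``in the spirit of'' Lemma \ref{lemC}. Your diagnosis of the crux --- that Lindel\"of $\bsigma$ is what converts the merely open, possibly badly indexed source into a countable cozero separating family --- matches the role these hypotheses play in Arhangel'ski\u\i's treatment.
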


\begin{lem}[{ \cite{Ar4}}]
	$X$ is Lindel\"of $\bsigma$ if and only if its remainder is an s-space.
\end{lem}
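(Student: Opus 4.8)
The plan is to handle both directions through the standard presentation of a Lindel\"of $\bsigma$-space by a countable external network modulo a compact cover, together with the elementary ``complementation'' dictionary inside a compactification $bX$ of $X$ with remainder $Y=bX\setminus X$: a countable family $\mathcal N$ of \emph{compact} subsets of $bX$ is such a network for $X$ exactly when the open sets $\{bX\setminus N:N\in\mathcal N\}$, restricted to $\overline Y^{\,bX}$, form a countable open source for $Y$. (This presentation of Lindel\"of $\bsigma$ is standard; it can also be extracted from Nagami's theorem together with the perfect-preimage description of Lindel\"of $p$-spaces, by pushing the countable base of the separable metrizable factor forward through the continuous surjection and taking $\beta X$-closures.) With the dictionary in place, each implication becomes a translation of one side's witness into the other's.

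For ``only if'' I would proceed directly: starting from a countable network $\mathcal N_0$ for $X$ modulo a compact cover $\mathcal K$, replace each member by its closure in $\beta X$ to obtain a countable family $\mathcal N$ of compact subsets of $\beta X$, and set $\mathcal S=\{\beta X\setminus N:N\in\mathcal N\}$; for $y\in Y$ let $T(y)=\bigcap\{S\in\mathcal S:y\in S\}$. Given $x\in X$, choose $K\in\mathcal K$ with $x\in K$; since $K$ is a compact subset of $X$ and $y\notin K$, normality of $\beta X$ together with the network property yields $N\in\mathcal N$ with $x\in K\subseteq N$ and $y\notin N$, so $y\in\beta X\setminus N\in\mathcal S$ while $x\notin\beta X\setminus N$, hence $x\notin T(y)$. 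As $x$ was arbitrary, $T(y)\subseteq Y$; also $y\in T(y)$, and some member of $\mathcal S$ contains $y$. Thus $Y=\bigcup_{y\in Y}T(y)$ exhibits $Y$, inside the compactification $\overline Y^{\,\beta X}$, as a union of intersections of non-empty subfamilies of the countable family $\{S\cap\overline Y^{\,\beta X}:S\in\mathcal S\}$, so $Y$ is an $s$-space. Since $\beta X$ played no special role here, the argument in fact shows every remainder of $X$ is an $s$-space.

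For ``if'' the natural move is to reverse the dictionary: from a countable open source $\mathcal S$ for a remainder $Y$ of $X$, form the complementary countable family of compact sets and verify it is an external network for $X$ modulo a compact cover, so that $X$ is Lindel\"of $\bsigma$ by the first part. The hard part---and I expect this to be essentially the whole obstacle---is that the source is guaranteed only in \emph{some} compactification $bY$ of $Y$, which need not be $\overline Y^{\,bX}$ and need not admit a map onto it; and open sources degrade under passage to a coarser compactification, because the collapsing map can enlarge a type cell $T(y)$ by points lying outside $Y$. Arranging the source in a compactification compatible with $bX$ is thus the substantive step, and it is here that Arhangel'ski\u\i's internal analysis of $p$- and $\bsigma$-spaces is needed, together with Lemma~\ref{lem37}, Nagami's theorem, and the perfect-preimage description of Lindel\"of $p$-spaces. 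The one remaining nuisance is the degenerate case in the definition of a source (points of $Y$ belonging to no member of $\mathcal S$), disposed of by adjoining $bY$ itself to $\mathcal S$.
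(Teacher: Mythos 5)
You prove only half of the lemma. (The paper itself gives no argument here --- the statement is quoted from Arhangel'ski\u\i\ \cite{Ar4} --- so your attempt has to stand on its own.) The ``only if'' half is essentially correct: the complementation dictionary you set up, starting from a countable network modulo a compact cover for $X$, taking closures in $\beta X$, and checking that each cell $T(y)$ misses $X$ (separate the compact $K\ni x$ from $y$ by regularity of $\beta X$ and then apply the network property inside the trace of that neighbourhood), is the standard argument and works, giving a countable open source for $Y$ in $\overline{Y}^{\,\beta X}$. But for the converse you offer no proof at all: you name an obstacle and then defer ``the substantive step'' to Arhangel'ski\u\i's analysis of $p$- and $\bsigma$-spaces, Lemma \ref{lem37} and Nagami's theorem. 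Pointing at the machinery of the very paper being cited is not an argument, so half of the equivalence is simply missing.

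Moreover, the obstacle you single out --- that the source is only given in \emph{some} compactification $bY$, which need not be $\overline{Y}^{\,\beta X}$ --- is in fact the easy part. If $f\colon c_1Y\to c_2Y$ is the natural map between two compactifications of $Y$, then $f$ fixes $Y$ pointwise and maps $c_1Y\setminus Y$ onto $c_2Y\setminus Y$ (cf.\ the use of Lemma \ref{lemC}; Engelking 3.5.7), hence is one-to-one over $Y$; so a source pulls back by taking preimages (to $\beta Y$, say) and pushes down by replacing each $S$ with the co-image $c_2Y\setminus f(c_1Y\setminus S)$, the cells being preserved exactly because they lie inside $Y$ and $f^{-1}(Y)=Y$. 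Thus having a countable open source is independent of the compactification, and you may as well assume the source sits in $\overline{Y}^{\,\beta X}$. The genuine difficulty in the converse lies elsewhere: reversing your dictionary produces a countable network modulo a compact cover only for $X\cap\overline{Y}^{\,\beta X}$, while points of $X$ possessing compact neighbourhoods disjoint from $\overline{Y}^{\,\beta X}$ are invisible to the complement family, and controlling that locally compact part requires further input (in Arhangel'ski\u\i's formulation and in every application in this paper, $X$ is Lindel\"of). A purely formal reversal cannot suffice: the disjoint sum of an uncountable discrete space with $\mathbb{P}$ has an $s$-space remainder --- a compact summand together with the remainder of the Lindel\"of $\bsigma$-space $\mathbb{P}$ --- yet is not Lindel\"of $\bsigma$. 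So the converse direction needs an idea your proposal does not supply.
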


\begin{lem}[{ \cite{Ar3}}]
	$X$ is a Lindel\"of p-space if and only if its remainder is.
\end{lem}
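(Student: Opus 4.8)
The plan is to prove the equivalence by reducing the property ``Lindel\"of $p$'' to the conjunction of two properties whose behaviour under passage to the remainder is already recorded, and then to exploit the self-duality of the remainder operation. By Lemma \ref{lem37}, a space $Z$ is a Lindel\"of $p$-space precisely when $Z$ is both a Lindel\"of $\bsigma$-space and an s-space. So, writing $Y=\beta X\setminus X$ for the remainder of $X$, it suffices to track the two conjuncts separately as one moves between $X$ and $Y$, and this is where the preceding lemma --- that $Z$ is Lindel\"of $\bsigma$ if and only if its remainder is an s-space --- enters.

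First I would apply that lemma to $X$ itself, obtaining that $X$ is Lindel\"of $\bsigma$ iff $Y$ is an s-space; combined with Lemma \ref{lem37} this gives that $X$ is Lindel\"of $p$ iff ($Y$ is an s-space) and ($X$ is an s-space). Applying the same two lemmas to $Y$ in place of $X$ yields, symmetrically, that $Y$ is Lindel\"of $p$ iff ($r(Y)$ is an s-space) and ($Y$ is an s-space), where $r(Y)=\beta Y\setminus Y$. Both characterisations share the conjunct ``$Y$ is an s-space'', so the theorem follows once I show that ``$X$ is an s-space'' and ``$r(Y)$ is an s-space'' are the same condition. The natural way to do this is to establish the self-duality of the remainder, namely that $X$ is itself a remainder of $Y$: granting this, the lemma ``Lindel\"of $\bsigma$ iff the remainder is an s-space'' applied to $Y$ already reads ``$Y$ is Lindel\"of $\bsigma$ iff $X$ is an s-space'', which is exactly the missing equivalence, and the symmetric bookkeeping then closes the argument in both directions at once.

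The main obstacle is precisely this self-duality: $Y$ need not be dense in $\beta X$, so $\beta Y$ need not coincide with $\beta X$ and $X$ need not literally equal $\beta Y\setminus Y$. I would handle this by first passing to a compactification $bX$ in which both $X$ and $bX\setminus X$ are dense --- reducing to the nowhere-locally-compact situation, where $X$ and $Y$ are genuinely remainders of one another in $bX$ --- and then checking that the three properties at issue (being an s-space, being Lindel\"of $\bsigma$, and being Lindel\"of $p$) are independent of the choice of compactification. For the Lindel\"of-type content this invariance is furnished by Lemma \ref{lem5} (all remainders are simultaneously Lindel\"of); for ``s-space'' and ``Lindel\"of $\bsigma$'' one verifies, via the source description in the definition together with the perfect-map machinery of Lemmas \ref{lemB} and \ref{lemC}, that ``some remainder'' and ``the remainder'' agree. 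Disposing of the locally compact part of $X$ --- where $Y$ fails to be dense and the literal self-duality breaks down, although there the disputed conjuncts become vacuously satisfied --- and confirming this compactification-invariance is the delicate step; once it is in place, the reduction of the previous paragraph finishes the proof.
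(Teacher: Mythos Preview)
The paper does not give a proof of this lemma: it is quoted from Arhangel'ski\u\i\ \cite{Ar3} and used as an input, with no argument supplied. So there is no ``paper's own proof'' to compare against.

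As for your proposal: the reduction via Lemma~\ref{lem37} and the preceding lemma to the symmetric pair of conditions ``$X$ is an s-space'' and ``$Y$ is an s-space'' is sound and is in the spirit of how Arhangel'ski\u\i\ organizes these duality results. You have also correctly isolated the genuine difficulty, namely that $Y=\beta X\setminus X$ need not be dense in $\beta X$, so $X$ is not literally $\beta Y\setminus Y$. Your plan to pass to a compactification in which both pieces are dense and to verify that ``s-space'' and ``Lindel\"of $\bsigma$'' are compactification-invariant for the remainder is the right repair; the invariance follows because any two remainders of the same space are related by a perfect surjection (combine Lemmas~\ref{lemB} and~\ref{lemC} applied to the canonical map $\beta X\to bX$), and both ``Lindel\"of $\bsigma$'' and ``s-space'' are preserved under perfect maps and perfect pre-images. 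The locally compact part can be split off as an open $\sigma$-compact (hence Lindel\"of $p$) subspace contributing nothing to the remainder, so it does not disturb the equivalence. With these verifications filled in, your argument goes through; but note that in the paper this lemma is simply imported, not proved.
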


\begin{proof}[Proof of Theorem \ref{thm32}]
	Such a space $X$ is a Lindel\"of p-space, since both it and its remainder are Lindel\"of $\bsigma$. Let $X$ map perfectly onto a metrizable $M$. Then $M$ is analytic and Menger, so is $\sigma$-compact, so $X$ is also.
\end{proof}

\begin{thm}\label{thm310}
	Co-analytic Menger absolute Borel spaces are $\sigma$-compact.
\end{thm}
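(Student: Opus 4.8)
The strategy is to reduce the absolute Borel case to the $K$-analytic case already settled in Theorem \ref{thm32}. The key observation is that every absolute Borel space is $K$-analytic: being in the $\sigma$-algebra generated by the closed sets of $\beta X$, an absolute Borel space $X$ is in particular absolutely analytic, hence $K$-analytic (this is the non-metrizable analogue of the classical fact that Borel sets are analytic; it can be extracted from the Frol\'ik/Rogers--Jayne machinery, since absolute Borel spaces sit inside the $K$-Borel hierarchy whose bottom two levels ($K_\sigma$ and $K_{\sigma\delta}$) are $K$-analytic, and $K$-analyticity is preserved under the operations building the hierarchy). So the first step is to record that a co-analytic Menger absolute Borel space $X$ is co-analytic, Menger, and $K$-analytic.

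Once that is in hand, Theorem \ref{thm32} applies verbatim: a co-analytic Menger $K$-analytic space is $\sigma$-compact. So the second (and only other) step is simply to invoke Theorem \ref{thm32}. Thus the proof is essentially two lines, modulo the embedding "absolute Borel $\Rightarrow$ $K$-analytic."

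The main obstacle is justifying that implication cleanly in the non-metrizable setting. In the separable metrizable case it is textbook, but here one needs the fact — due to Frol\'ik and developed in Rogers--Jayne — that the class of $K$-analytic spaces contains the $\sigma$-compact spaces and is closed under the relevant countable operations (countable intersections, and suitable countable unions within a fixed compactification), so that the whole $\sigma$-algebra generated by the closed sets of $\beta X$, when realized as subspaces, lands inside the $K$-analytic spaces. I would cite the appropriate items in \cite{RJ} and \cite{F} rather than reprove this. An alternative, perhaps cleaner, route is to observe that absolute Borel spaces are absolutely analytic directly (every absolute Borel subset of a compact space is analytic in that compact space, by transfinite induction on Borel rank, using that analytic sets in compact metrizable-fiber situations are closed under countable unions and intersections), and then analytic-in-a-compactum implies $K$-analytic. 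Either way, the substantive content is entirely in Theorem \ref{thm32}; the present theorem is a corollary once the class inclusion is noted.

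\begin{proof}
Let $X$ be co-analytic, Menger, and absolute Borel. Since $X$ lies in the $\sigma$-algebra generated by the closed subsets of $\beta X$, it is absolutely analytic, hence $K$-analytic: the $\sigma$-compact subspaces of $\beta X$ are $K$-analytic, and the class of $K$-analytic spaces is closed under the countable operations generating that $\sigma$-algebra (see \cite{F}, \cite{RJ}). Thus $X$ is co-analytic, Menger, and $K$-analytic, so by Theorem \ref{thm32}, $X$ is $\sigma$-compact.
\end{proof}
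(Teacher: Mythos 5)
Your proposal is correct and follows the same route as the paper: the paper proves that absolute Borel spaces are $K$-analytic (closed subsets of $\beta X$ are compact, hence $K$-analytic, and the $K$-analytic spaces are closed under countable unions and intersections, via the Souslin-operation lemma of Rogers and Jayne), and then derives Theorem \ref{thm310} immediately from Theorem \ref{thm32}. Your justification of the inclusion ``absolute Borel $\Rightarrow$ $K$-analytic'' matches the paper's Corollary \ref{cor313} and Lemma \ref{lem314}.
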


To see this, we introduce:

\begin{defn}
	Given a family of sets $\mathcal{S}$, Rogers and Jayne \cite{RJ} say that a set is a \textbf{Souslin $\mathcal{S}$-set} if it has a representation in the form
	\begin{equation*}
		\bigcup_{\sigma\in{^\omega\omega}}\bigcap_n\textbf{S}(\sigma|n)
	\end{equation*}
	with \textbf{S}$(\sigma|n)\in\mathcal{S}$ for all finite sequences of positive integers.
\end{defn}

Rogers and Jayne prove:
\begin{lem}[{ \cite[2.5.4]{RJ}}]\label{lem314}
	The family $\mathcal{A}$ of $K$-analytic subsets of a completely regular space is closed under the Souslin operation i.e. every Souslin $\mathcal{A}$-set is in $\mathcal{A}$; if a family is closed under the Souslin operation, it is closed under countable intersections and countable unions.
\end{lem}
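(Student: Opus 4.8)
The plan is to prove the two assertions separately; the second is an easy combinatorial observation and the first is the substantive one. For the second, suppose a family $\mc{F}$ is closed under the Souslin operation and let $A_1,A_2,\dots\in\mc{F}$. To realize $\bigcap_k A_k$ as a Souslin $\mc{F}$-set, set $\mathbf{S}(\sigma|n)=A_n$, depending only on the length $n$; then $\bigcap_n\mathbf{S}(\sigma|n)=\bigcap_n A_n$ for every $\sigma$, so the operation returns $\bigcap_n A_n$. To realize $\bigcup_k A_k$, set $\mathbf{S}(\sigma|n)=A_{\sigma(1)}$, depending only on the first entry; then $\bigcap_n\mathbf{S}(\sigma|n)=A_{\sigma(1)}$, and the union over $\sigma\in\w^\w$ is $\bigcup_k A_k$. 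Both are therefore in $\mc{F}$.

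For the first assertion, the tool I would use is the representation of $K$-analytic sets by compact-valued upper semicontinuous maps. Recall (see \cite{RJ}) that a subset of a completely regular $X$ is $K$-analytic if and only if it equals $\bigcup_t\varphi(t)$ for some \emph{usco} map $\varphi\colon\w^\w\to\mc{K}(\beta X)$ with all $\varphi(t)\subseteq X$, where usco means compact-valued and upper semicontinuous and $\mc{K}(\beta X)$ denotes the compact subsets of $\beta X$. So I would write the given Souslin $\mc{A}$-set as $A=\bigcup_{\tau\in\w^\w}\bigcap_n A_{\tau|n}$ with each $A_s$ $K$-analytic, and fix for every finite sequence $s$ an usco $\varphi_s\colon\w^\w\to\mc{K}(\beta X)$ with values in $X$ and $\bigcup_t\varphi_s(t)=A_s$.

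The core of the argument is to assemble these into a single usco map on a copy of $\w^\w$. I would take as parameter space $\Omega=\w^\w\times(\w^\w)^{\w}$, which is homeomorphic to $\w^\w$ since a countable product of copies of $\w^\w$ is again $\w^\w$; a point is $\xi=(\tau,(t_n)_n)$. Define
$$\Phi(\xi)=\bigcap_{n\geq 1}\varphi_{\tau|n}(t_n).$$
First I would check that each $\xi\mapsto\varphi_{\tau|n}(t_n)$ is usco: the dependence on $\tau$ is only through $\tau|n$, which is locally constant on $\Omega$, and on each clopen piece where $\tau|n=s$ is fixed the map is $\xi\mapsto\varphi_s(t_n)$, an usco map precomposed with the continuous projection $\xi\mapsto t_n$, hence usco; since upper semicontinuity is a local property and the pieces are clopen, the whole map is usco. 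Then $\Phi$, a countable intersection of usco maps into the compact Hausdorff space $\beta X$, is itself usco: its values are compact, and for any open $U\supseteq\Phi(\xi_0)$ compactness forces some finite sub-intersection already to lie in $U$, whereupon upper semicontinuity of that finite intersection supplies a neighborhood of $\xi_0$ on which $\Phi\subseteq U$.

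Finally I would verify $\bigcup_\xi\Phi(\xi)=A$. If $x\in\Phi(\tau,(t_n)_n)$ then $x\in\varphi_{\tau|n}(t_n)\subseteq A_{\tau|n}$ for all $n$, so $x\in\bigcap_n A_{\tau|n}\subseteq A$; conversely, if $x\in A$ choose $\tau$ with $x\in A_{\tau|n}$ for every $n$, and then $t_n$ with $x\in\varphi_{\tau|n}(t_n)$, giving $x\in\Phi(\tau,(t_n)_n)$. Since every $\Phi(\xi)$ lies in $X$, $A$ is the usco image of $\Omega\cong\w^\w$ and hence $K$-analytic. The main obstacle is exactly the usco bookkeeping of the previous paragraph — that a countable intersection of usco maps into $\beta X$ stays usco — and this is precisely where compactness of the values (the ``$K$'' in $K$-analytic) is essential; it is the reason the Souslin operation preserves $K$-analyticity even though it would destroy a merely Lindel\"of property.
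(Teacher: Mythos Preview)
Your argument is correct. The second assertion is handled by the standard trick you give, and for the first your usco bookkeeping is sound: the only delicate point is that a countable intersection of compact-valued usco maps into a compact Hausdorff ambient space (here $\beta X$) is again usco, and your compactness/finite-intersection-property reduction to a finite sub-intersection is exactly the right justification. One cosmetic remark: you could take the $\varphi_s$ with values in $\mc{K}(X)$ directly rather than in $\mc{K}(\beta X)$; the passage to $\beta X$ is harmless but not needed, since a compact-valued usco map into $X$ is automatically usco when viewed in any Hausdorff superspace.

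As for comparison with the paper: there is nothing to compare. The paper does not prove this lemma at all; it is quoted verbatim from Rogers and Jayne \cite[2.5.4]{RJ} and used as a black box (to derive Corollary~\ref{cor313} and in the proofs of Theorems~\ref{thm314} and the proper $K$-analytic characterization). Your proof is essentially the classical one found in that reference, so you have reconstructed the intended argument rather than found an alternative route.
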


\begin{cor}\label{cor313}
	Absolute Borel spaces are $K$-analytic.
\end{cor}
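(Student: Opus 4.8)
The plan is to realize $X$ as a subspace of its own \v{C}ech--Stone compactification and to trap it inside a $\sigma$-algebra of subsets of $\beta X$ all of whose members are $K$-analytic. First I would record two elementary building blocks. A closed subset $F$ of $\beta X$ is compact, hence is a Lindel\"of \v{C}ech-complete space, hence is $K$-analytic (it is the identity image of itself). A cozero subset $U=\{p\in\beta X:g(p)\ne 0\}$, with $g\in C(\beta X)$, is $\sigma$-compact, since $U=\bigcup_{n\ge 1}\{p:|g(p)|\ge 1/n\}$ and each term is closed, hence compact, in $\beta X$; and a $\sigma$-compact space is $K$-analytic, being a continuous image of the topological sum of its compact pieces --- a space that is locally compact, hence \v{C}ech-complete, and Lindel\"of. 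So both the zero-sets of $\beta X$ and their complements are $K$-analytic subspaces of $\beta X$.

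Next I would let $\mathcal{C}$ be the family of all $A\subseteq\beta X$ such that both $A$ and $\beta X\setminus A$ are $K$-analytic subspaces of $\beta X$. By Lemma~\ref{lem314} the family of $K$-analytic subsets of $\beta X$ is closed under countable unions and under countable intersections; hence if each $A_n\in\mathcal{C}$, then $\bigcup_n A_n$ is $K$-analytic and so is $\beta X\setminus\bigcup_n A_n=\bigcap_n(\beta X\setminus A_n)$, so $\bigcup_n A_n\in\mathcal{C}$. Together with the trivial closure under complementation and the fact that $\beta X$ itself is compact, this makes $\mathcal{C}$ a $\sigma$-algebra. Since $\mathcal{C}$ contains every zero-set of $\beta X$, it contains the entire $\sigma$-algebra these generate; as $X$ lies in that $\sigma$-algebra, $X\in\mathcal{C}$, and in particular $X$ is $K$-analytic.

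The step I expect to be the main obstacle is precisely the passage from the $\sigma$-algebra generated by the \emph{zero}-sets of $\beta X$ to the one generated by \emph{all} closed subsets of $\beta X$, which is what the literal definition of ``absolute Borel'' invokes. The argument above is purely formal except for the single point that the complement of each generator must again be $K$-analytic; for a cozero-set this is the $\sigma$-compactness computation, but the complement of an arbitrary closed set is only an open subset of $\beta X$, and a general open subset of a compact space need not even be Lindel\"of, let alone $K$-analytic. I would resolve this either by observing that in every situation where the Corollary is actually used the space $X$ is Lindel\"of --- e.g.\ in Theorem~\ref{thm310} it is Menger --- and arguing that one may then work throughout with zero-sets in place of arbitrary closed sets, or, failing a clean such reduction, by reading the Corollary under the mild standing hypothesis of Lindel\"ofness, for which the proof above is complete as written; isolating the exact generality in which ``Borel in $\beta X$'' already forces $K$-analyticity is the delicate residual point.
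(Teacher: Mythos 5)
Your argument, as you yourself note, only reaches the Baire $\sigma$-algebra of $\beta X$: the family $\mathcal{C}$ of sets such that both they and their complements are K-analytic contains the zero-sets, hence the $\sigma$-algebra they generate, and this correctly proves that absolute Baire spaces are K-analytic (a fact the paper records separately), but it does not prove the Corollary. Neither of your suggested repairs closes the gap. Lindel\"ofness (or Mengerness) of $X$ does not let you replace arbitrary closed subsets of $\beta X$ by zero-sets --- that would require $\beta X$ to be perfectly normal, which fails already for $\beta\omega$ --- and it gives no control over the other sets occurring in the generation of $X$, in particular over complements of closed sets, i.e.\ open subsets of $\beta X$, which need not be Lindel\"of and hence need not be K-analytic.

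The point you are missing is that the word ``$\sigma$-algebra'' in the definition is not intended to include complementation: ``absolute Borel'' here is Frol\'ik's notion, the family generated from the closed subsets of $\beta X$ by countable unions and countable intersections alone, just as ``Borelian of the first type'' is generated from the open subsets; if complements were allowed, the two notions would coincide and the paper's careful distinction between them (e.g.\ in Theorem \ref{thm318}) would be vacuous. Indeed, under your literal reading the Corollary is false: $\beta\omega_1=\omega_1+1$, so $\omega_1$ is the complement of the closed set $\{\omega_1\}$ and would be ``absolute Borel,'' yet it is not Lindel\"of and hence not K-analytic. With the intended reading, the paper's proof is exactly your first observation plus Lemma \ref{lem314} and nothing more: closed subsets of $\beta X$ are compact, hence K-analytic, and the K-analytic subsets of $\beta X$ are closed under countable unions and countable intersections, so every set obtained from closed sets by iterating these two operations --- in particular $X$ --- is K-analytic. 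No complements ever have to be handled, so the cozero-set computation and the auxiliary family $\mathcal{C}$ are unnecessary.
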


\begin{proof}
	This is well-known. In $\beta X$, closed subsets are compact; compact spaces are $K$-analytic.
\end{proof}

Theorem \ref{thm310} now follows from \ref{thm32}.

\begin{thm}\label{thm314}
	Every Lindel\"of Borelian space of the first type is K-analytic.
\end{thm}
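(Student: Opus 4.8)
The plan is to promote the representation of $X$ implicit in the hypothesis --- a Souslin scheme built from \emph{open} subsets of $\beta X$ --- to one built from \emph{compact} subsets of $\beta X$, which exhibits $X$ as a $K$-analytic (equivalently, absolute Borel) subset of $\beta X$; then Corollary \ref{cor313} finishes. First I would record two standard facts about the Souslin operation. (a) Since the open subsets of $\beta X$ are closed under finite unions and intersections, the class of sets obtained from them by the Souslin operation is itself closed under the Souslin operation, hence (by the second clause of Lemma \ref{lem314}) under countable unions and intersections; containing the open sets, it therefore contains the smallest such class, so every Borelian set of the first type is Souslin over the open subsets of $\beta X$. (b) Since compact sets are $K$-analytic and the $K$-analytic subsets of a completely regular space are closed under the Souslin operation (first clause of Lemma \ref{lem314}; cf. Corollary \ref{cor313}), any set which is Souslin over the compact subsets of $\beta X$ is $K$-analytic. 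So it suffices to write
\[
X=\bigcup_{\sigma\in{}^{\omega}\omega}\bigcap_{n}C_{\sigma|n}\qquad\text{with each }C_{\sigma|n}\text{ compact},
\]
starting from such a representation with \emph{open} sets $U_{\sigma|n}$ in place of the $C_{\sigma|n}$; as usual we may assume the open scheme is regular, i.e.\ $U_{s^\frown k}\subseteq U_{s}$.

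The single place Lindel\"ofness of $X$ enters is the observation that, for any compact $C\subseteq\beta X$, the set $C\cap X$ is closed in $X$ and hence Lindel\"of. Using this, I would build by recursion on the length of nodes a tree $S\subseteq{}^{<\omega}\omega$ and, for $s\in S$, a compact $C_{s}\subseteq\beta X$ and a node $t_{s}$ of the original scheme of length $|s|$, so that $C_{s}\subseteq U_{t_{s}}$; the assignment $s\mapsto t_{s}$ is tree-preserving, so each branch $\sigma$ of $S$ determines $\tau\in{}^{\omega}\omega$ with $\bigcap_{n}C_{\sigma|n}\subseteq\bigcap_{n}U_{\tau|n}\subseteq X$; $C_{s'}\subseteq C_{s}$ whenever $s\subseteq s'$; and, at each level, the interiors of the $C_{s}$ cover $X$, with every $x\in X\cap\operatorname{int}C_{s}$ lying in $\bigcap_{n}U_{\tau|n}$ for some branch $\tau$ through $t_{s}$ contained in $\operatorname{int}C_{s^\frown k}$ for a child with $t_{s^\frown k}$ one step further along $\tau$. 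To pass to the children of $s$, for each one-step extension $t'$ of $t_{s}$ one covers the Lindel\"of, closed-in-$X$ set $C_{s}\cap\overline{U_{t'}}\cap X$ by countably many compact neighbourhoods of its points (those neighbourhoods of points actually in $U_{t'}$ taken inside $\operatorname{int}C_{s}\cap U_{t'}$), and declares the resulting compacta that fall inside $U_{t'}$ to be the children labelled $t'$. A fixed $x\in X$, following any witnessing branch $\tau$, then stays at each level in the interior of one of the children lying along $\tau$, giving a branch of $S$ through $x$; conversely every branch of $S$ intersects inside $X$ by construction. Hence $X=\bigcup_{\sigma}\bigcap_{n}C_{\sigma|n}$, which is $K$-analytic by (b).

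I expect the genuine difficulty to be making this recursion precise, and it is essentially why the Lindel\"of hypothesis is needed: the subsets of $X$ ``belonging to'' a given node --- the points whose witnessing branches pass through it --- need not be Lindel\"of, so one cannot cover them directly. One is forced to cover the larger closed-in-$X$ sets $C_{s}\cap\overline{U_{t'}}\cap X$ and then argue that the boundary points (those in $\overline{U_{t'}}\setminus U_{t'}$) which get picked up along $t'$ but do not really continue along $t'$ are harmless, since they are recaptured along the correct sibling. One can sidestep the bookkeeping by citing instead that $X$, being Souslin over the $G_{\delta}$ subsets of $\beta X$, is \v{C}ech-analytic, and that Lindel\"of \v{C}ech-analytic spaces are $K$-analytic. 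As a check on the hypotheses: by De Morgan's laws the hypothesis is equivalent to ``$\beta X\setminus X$ is absolute Borel,'' whence the remainder is $K$-analytic by Corollary \ref{cor313} and $X$ is of countable type by Lemma \ref{lem5}; but this alone is not enough --- a co-analytic non-analytic subset of $\mathbb{R}$ is Lindel\"of, of countable type, and has analytic (so $K$-analytic) remainder, yet is not $K$-analytic --- so the full structure of $\beta X\setminus X$ as a $\sigma$-lattice over compacta, equivalently the scheme argument above, must be used.
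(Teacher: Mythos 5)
Your reductions at the two ends are fine: by idempotence of the Souslin operation every Borelian set of the first type is indeed Souslin over the open subsets of $\beta X$, and any set that is Souslin over compact subsets of $\beta X$ is $K$-analytic by Lemma \ref{lem314} and Corollary \ref{cor313}. The gap is in the middle step, the recursion that is supposed to turn the open scheme into a compact one, and it is exactly at the point you yourself flag. At a node $s$ with label $t_s$ and child label $t'=t_s{}^\frown j$, you cover the Lindel\"of set $C_s\cap\overline{U_{t'}}\cap X$ by countably many compact neighbourhoods, but you keep as children labelled $t'$ only those compacta lying inside $U_{t'}$. A point $x\in X\cap\operatorname{int}C_s\cap U_{t'}$ may, in the countable subcover extracted by Lindel\"ofness, be covered only by neighbourhoods of points of $\overline{U_{t'}}\setminus U_{t'}$; such neighbourhoods necessarily meet the complement of $U_{t'}$ and are discarded, so $x$ need not lie in the interior of any child labelled $t'$, and the branch that was supposed to follow the witnessing $\tau$ through $x$ dies. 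Your remark that the boundary points are ``recaptured along the correct sibling'' concerns those boundary points themselves (which indeed need not continue along $t'$); it does nothing for $x$, and switching $x$ to a sibling label gives no guarantee that $x$ survives along the opens attached to that sibling. The obvious repair --- covering $C_s\cap U_{t'}\cap X$ instead --- is blocked for precisely the reason you state: that set is open in the Lindel\"of space $C_s\cap X$, not closed in $X$, so no countable subcover is available. Thus the coverage half of $X=\bigcup_\sigma\bigcap_n C_{\sigma|n}$ is not established, and that half is the whole content of the theorem. What you are attempting here is essentially Fremlin's theorem that Lindel\"of \v{C}ech-analytic spaces are $K$-analytic; your ``sidestep'' of citing that theorem would indeed yield a correct proof (and you do not even need to pass through $G_\delta$'s: open sets are already among the sets of the form open$\,\cap\,$closed), but it rests entirely on that nontrivial external result, not on the recursion you sketch.

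For comparison, the paper never touches the Souslin operation on the open sets: it inducts on the rank of $X$ in the family generated from the open subsets of $\beta X$ by countable unions and intersections. The base case is handled by noting that open subspaces of a compact space are locally compact, so Lindel\"of ones are $\sigma$-compact, and the successor steps use only the closure of the $K$-analytic sets under countable unions and intersections from Lemma \ref{lem314}; Lindel\"ofness enters only at the bottom, with no tree construction at all. If you want a self-contained argument, that rank induction is the intended route; otherwise you must import Fremlin's theorem explicitly.
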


\begin{proof}
	We proceed by induction on subspaces of a fixed compact space. For the basis step, note that open subspaces of a compact space are locally compact, while Lindel\"of locally compact spaces are $\sigma$-compact. For the successor stage, assume a Lindel\"of Borelian set of the first type is the union (intersection) of countably many K-analytic subspaces. By Lemma \ref{lem314}, the union (intersection) is K-analytic and hence Lindel\"of. The limit stage is trivial.
\end{proof}

{Arhangel'ski\u\i} \cite{Ar4} proved that Borelian sets of the first type are s-spaces. This is interesting because:

\begin{thm}\label{lem39}
	Every absolute Borel s-space is a Lindel\"of p-space.
\end{thm}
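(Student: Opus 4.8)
The plan is to obtain Theorem~\ref{lem39} by assembling three facts that are already at hand: every absolute Borel space is $K$-analytic (Corollary~\ref{cor313}); every $K$-analytic space is a Lindel\"of $\bsigma$-space; and a Lindel\"of $\bsigma$-space that is simultaneously an s-space is a Lindel\"of p-space (Lemma~\ref{lem37}). So there is essentially no computation to do — the work is entirely in stringing these together, plus supplying one routine justification.

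Concretely, I would argue as follows. Let $X$ be an absolute Borel s-space. First, apply Corollary~\ref{cor313} to conclude that $X$ is $K$-analytic, i.e.\ $X$ is a continuous image of some Lindel\"of \v{C}ech-complete space $Z$. Next, observe that $Z$ is a Lindel\"of p-space: a completely regular \v{C}ech-complete space is a $p$-space, and a Lindel\"of \v{C}ech-complete space is a perfect pre-image of a completely metrizable (hence, by Lindel\"ofness, Polish, hence separable metrizable) space, which is exactly the quoted characterization of ``Lindel\"of p''. Then Nagami's lemma, applied to the continuous surjection $Z \to X$, shows that $X$ is a Lindel\"of $\bsigma$-space; said differently, $K$-analytic spaces are always Lindel\"of $\bsigma$-spaces. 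Finally, $X$ is a Lindel\"of $\bsigma$-space by the previous sentence and an s-space by hypothesis, so Lemma~\ref{lem37} yields that $X$ is a Lindel\"of p-space.

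I do not expect a genuine obstacle. The single point that deserves a word of justification is the implication ``Lindel\"of \v{C}ech-complete $\Rightarrow$ Lindel\"of p-space'' (equivalently, ``$K$-analytic $\Rightarrow$ Lindel\"of $\bsigma$''); this is classical generalized-metric-space theory and may be cited directly from {Arhangel'ski\u\i} or Rogers--Jayne rather than reproved in place. Everything else is an immediate appeal to results stated earlier in the paper.
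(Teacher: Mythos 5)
Your proposal is correct, and it is genuinely different from the paper's argument. The paper proves the theorem by induction on Borel order: at the successor stage it shows that a countable union of Lindel\"of p-spaces is Lindel\"of $\bsigma$ (via the disjoint sum and the natural identification map) and that a countable intersection is Lindel\"of $\bsigma$ (via the diagonal of the countable product), and then invokes Lemma~\ref{lem37} together with the s-space hypothesis. You instead short-circuit the hierarchy entirely: absolute Borel implies K-analytic (Corollary~\ref{cor313}), K-analytic implies Lindel\"of $\bsigma$ (a Lindel\"of \v{C}ech-complete space is a perfect pre-image of a Polish space by Frol\'ik's classical theorem, hence Lindel\"of p, and Nagami's lemma passes Lindel\"of $\bsigma$ to continuous images), and then Lemma~\ref{lem37} finishes. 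The one fact you import from outside the paper's stated lemmas --- K-analytic $\Rightarrow$ Lindel\"of $\bsigma$ --- is indeed classical and is in fact already used implicitly by the author (e.g.\ in the proof of Theorem~\ref{thm318}, ``both K-analytic spaces, hence Lindel\"of $\bsigma$ spaces''), so citing it is unproblematic. What your route buys: it needs no transfinite induction and no bookkeeping of how the s-space/Lindel\"of p property behaves at intermediate levels of the Borel hierarchy, and it actually yields the formally stronger statement that every K-analytic s-space is a Lindel\"of p-space, of which the theorem is the special case furnished by Corollary~\ref{cor313}. What the paper's route buys is self-containedness: it reproves the relevant closure properties of Lindel\"of $\bsigma$ by hand rather than appealing to the K-analytic machinery.
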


\begin{proof}
	We induct on Borel order. The basis step is trivial. We need to show s-spaces which are the countable union (intersection) of Lindel\"of p-spaces are Lindel\"of \emph{p}. By \ref{lem37} it suffices to show they are Lindel\"of $\bsigma$. Let $\{X_n\}_{n<\omega}$ be Lindel\"of \textit{p}. Let $\sum_{n<\omega} X_n$ be the disjoint sum of the $X_n$'s. Then $\sum_{n<\omega} X_n$ is clearly Lindel\"of \textit{p}. Consider the natural map $\sigma$ from $\sum_{n<\omega} X_n$ to $\bigcup_{n<\omega} X_n$ obtained by identifying all copies of a point $x\in\bigcup_{n<\omega}X_n$ which are in $\sum_{n<\omega} X_n$. $\sigma$ is continuous, so $\bigcup_{n<\omega} X_n$ is Lindel\"of $\bsigma$.
	
	Now consider $\prod_{n<\omega} X_n$. This is also Lindel\"of \textit{p} \cite{Ar1} and so then is the diagonal $\Delta$. Define $\pi\left(\langle x,x, \dots\rangle\right)=x$. Then $\pi$ is continuous and maps $\Delta$ onto $\bigcap_{n<\omega} X_n$, which is therefore Lindel\"of $\bsigma$.
\end{proof}

Note Okunev's space is Lindel\"of absolute $F_{\sigma\delta}$ but is not s, since it is not of countable type \cite{BT}, while s-spaces are \cite{Ar4}. By Theorem \ref{thm32}, Okunev's space is not co-analytic.

Borel sets of reals are of course analytic; Okunev's space shows that Lindel\"of absolute Borel spaces need not be analytic, since it is Menger but not $\sigma$-compact. Compact spaces are Borelian of the first type, so the latter spaces need not be analytic.

A somewhat smaller class of spaces than the $K$-analytic ($K$-Lusin) ones is comprised of what Rogers and Jayne call the \textit{proper $K$-analytic} (\textit{proper $K$-Lusin}) spaces.

\begin{defn}
	A space is \textbf{proper $K$-analytic} if it is the perfect pre-image of an analytic subspace of $\mathbb{R}^\omega$. A space is \textbf{proper K-Lusin} if it is the perfect pre-image of a Lusin subspace of $\mathbb{R}^\omega$.
\end{defn}

Rogers and Jayne \cite{RJ} prove that a space is proper K-Lusin if and only if both it and its remainder are K-analytic. It follows that a space is proper K-Lusin if and only if it and its remainder are K-Lusin. They also prove that K-Lusin spaces are absolute $K_{\sigma\delta}$, i.e. what we have called Frol\'ik. It follows that proper K-Lusin spaces are both $K_{\sigma\delta}$ and $G_{\delta\sigma}$, i.e. countable unions of \v{C}ech-complete spaces. We shall provide a large number of equivalences for ``proper K-Lusin'' below.

Proper K-analytic spaces are p-spaces, and their continuous real-valued images are analytic, so:

\begin{thm}
	Menger proper K-analytic spaces are $\sigma$-compact.
\end{thm}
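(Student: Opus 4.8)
The plan is to run the argument already used for Theorem \ref{thm32}, now exploiting the two facts just recorded: a proper $K$-analytic space is a $p$-space, and its continuous real-valued (indeed, separable-metrizable-valued) images are analytic.

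So let $X$ be Menger and proper $K$-analytic. By definition $X$ is the perfect pre-image of an analytic subspace $M$ of $\mathbb{R}^\omega$; fix such a perfect map $p\colon X\to M$. (Equivalently, one could argue that $X$, being a $p$-space and Lindel\"of — Menger spaces are Lindel\"of — is a Lindel\"of $p$-space, hence a perfect pre-image of a separable metrizable space, which one may take to be this $M$; and $M$, as a continuous image of $X$ in a separable metrizable space, is analytic.)

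Next I would observe that $M$ is Menger: it is a continuous image of the Menger space $X$, and Mengerness passes to continuous images. Since $M$ is also analytic, Arhangel'ski\u\i's theorem \cite{Ar2} that Menger analytic spaces are $\sigma$-compact gives $M=\bigcup_{n<\omega}K_n$ with each $K_n$ compact.

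Finally, since $p$ is perfect each $p^{-1}(K_n)$ is compact, and $X=\bigcup_{n<\omega}p^{-1}(K_n)$, so $X$ is $\sigma$-compact. There is no serious obstacle here: the substantive work has already been packaged into the two preliminary facts (proper $K$-analytic spaces are $p$-spaces, and their metrizable images are analytic), and what remains is only the routine transfer of $\sigma$-compactness along a perfect map and the preservation of Mengerness under continuous maps — precisely the pattern of the proof of Theorem \ref{thm32}.
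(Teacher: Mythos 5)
Your proof is correct and follows essentially the same route the paper intends (the theorem is stated as an immediate consequence of the remark that proper K-analytic spaces are p-spaces with analytic metrizable images, via the same pattern as Theorem \ref{thm32}): map $X$ perfectly onto an analytic separable metrizable $M$, note $M$ is Menger hence $\sigma$-compact by Arhangel'ski\u\i's theorem, and pull $\sigma$-compactness back along the perfect map. Your observation that one can invoke the definition of proper K-analytic directly, bypassing the Lindel\"of p-space detour, is a harmless simplification of the same argument.
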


\begin{cor}
	Menger proper K-Lusin spaces are $\sigma$-compact.
\end{cor}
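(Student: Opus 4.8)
The plan is to deduce this immediately from the preceding theorem by observing that the class of proper $K$-Lusin spaces is contained in the class of proper $K$-analytic spaces. Indeed, by definition a proper $K$-Lusin space is a perfect pre-image of a Lusin subspace $L$ of $\mathbb{R}^\omega$; but a Lusin space, being an injective continuous image of $\mathbb{P}$, is in particular a continuous image of $\mathbb{P}$, i.e.\ analytic. Hence $L$ is an analytic subspace of $\mathbb{R}^\omega$, so the given space is a perfect pre-image of an analytic subspace of $\mathbb{R}^\omega$, which is precisely the definition of proper $K$-analytic.

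So the key step is just: proper $K$-Lusin $\Rightarrow$ proper $K$-analytic. Having established that, a Menger proper $K$-Lusin space is a Menger proper $K$-analytic space, and the immediately preceding Theorem (Menger proper $K$-analytic spaces are $\sigma$-compact) finishes it. There is essentially no obstacle here — the only thing to be careful about is that the two definitions use the same notion of ``perfect pre-image of a subspace of $\mathbb{R}^\omega$,'' so nothing needs to be re-checked about perfect maps.

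If one preferred a self-contained argument not appealing to the theorem's statement but only to its ingredients, one could instead reprove it directly along the same lines: a proper $K$-Lusin space $X$ is a perfect pre-image of a (separable) metrizable space, hence a Lindel\"of $p$-space, mapping perfectly onto some separable metrizable $M$ which is (an injective, hence in particular continuous, image of $\mathbb{P}$, thus) analytic; being Menger, $X$ forces $M$ to be Menger, and then $M$ is $\sigma$-compact by Arhangel'ski\u\i's theorem, whence $X$ is $\sigma$-compact. But the one-line inclusion argument above is the cleanest route.

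\begin{proof}
Every Lusin subspace of $\mathbb{R}^\omega$ is an analytic subspace of $\mathbb{R}^\omega$, so every proper $K$-Lusin space is proper $K$-analytic. The result follows from the previous Theorem.
\end{proof}
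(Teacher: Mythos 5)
Your proposal is correct and matches the paper's intent exactly: the paper states this as an immediate corollary of the theorem that Menger proper K-analytic spaces are $\sigma$-compact, the implicit point being precisely that a Lusin subspace of $\mathbb{R}^\omega$ is analytic, so proper K-Lusin spaces are proper K-analytic. Nothing further is needed.
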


\begin{lem}[{ \cite{RJ}}]\label{lem319}
	Let $\mathfrak{Z}(Y)$ be the collection of zero-sets of $Y$. Then $X$ is proper $K$-analytic if and only if $X\in\textbf{S}(\mathfrak{Z}(\beta X))$.
\end{lem}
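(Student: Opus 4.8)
The plan is to prove both implications by transporting a Souslin scheme along the perfect map (respectively, along its \v{C}ech--Stone extension), using that in a metrizable space every closed set is a zero-set, that analytic subsets of a Polish space are exactly the Souslin sets over its closed sets, that continuous preimages of zero-sets are zero-sets, and that a separable metrizable analytic space embeds homeomorphically in $\mathbb{R}^\omega$. No regularization of the Souslin schemes will be needed.

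For the forward direction, suppose $X$ is a perfect pre-image of an analytic $A\subseteq\mathbb{R}^\omega$ via $f\colon X\to A$. I would fix a homeomorphic embedding of $\mathbb{R}^\omega$ into a compact metrizable space $K$ (say the Hilbert cube), so $A\subseteq K$, and let $\tilde f\colon\beta X\to K$ be the continuous extension of $f$. Since $\tilde f(\beta X)$ is a compact Hausdorff space, $\tilde f(X)=f(X)=A$, and $f\colon X\to A$ is perfect, Lemma~\ref{lemC} yields $\tilde f^{-1}(A)=X$. Now $A$, being a continuous image of $\mathbb{P}$ lying inside the Polish space $K$, is an analytic subset of $K$, so it admits a Souslin representation $A=\bigcup_\sigma\bigcap_n F(\sigma|n)$ with each $F(\sigma|n)$ closed --- hence a zero-set --- in $K$. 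Pulling back through $\tilde f$ gives $X=\tilde f^{-1}(A)=\bigcup_\sigma\bigcap_n\tilde f^{-1}(F(\sigma|n))$, and each $\tilde f^{-1}(F(\sigma|n))$ is a zero-set of $\beta X$; thus $X\in\textbf{S}(\mathfrak{Z}(\beta X))$.

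For the converse, suppose $X=\bigcup_\sigma\bigcap_n Z(\sigma|n)$ with each $Z(\sigma|n)\in\mathfrak{Z}(\beta X)$. Only countably many zero-sets occur, indexed by finite sequences $s$: write them as $Z_s=g_s^{-1}(0)$ with $g_s\colon\beta X\to[0,1]$ continuous, and let $g=(g_s)_s\colon\beta X\to Q:=[0,1]^{\omega^{<\omega}}$, a compact metrizable space. With $W_s=\{q\in Q:q(s)=0\}$, a zero-set of $Q$ satisfying $g^{-1}(W_s)=Z_s$, form the analytic set $\mathcal{A}=\bigcup_\sigma\bigcap_n W_{\sigma|n}\subseteq Q$; then $g^{-1}(\mathcal{A})=X$. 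Put $C=g(\beta X)$ and $\mathcal{B}=\mathcal{A}\cap C$, still a separable metrizable analytic space, with $g^{-1}(\mathcal{B})=X$. The map $g\colon\beta X\to C$ is perfect, being a continuous surjection from a compact space onto a Hausdorff space, so by Lemma~\ref{lemB} its restriction $g|X\colon X\to\mathcal{B}$ is perfect. Since $\mathcal{B}$ embeds homeomorphically in $\mathbb{R}^\omega$, this exhibits $X$ as a perfect pre-image of an analytic subspace of $\mathbb{R}^\omega$, i.e.\ as proper $K$-analytic.

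The step I expect to need the most care is the application of Lemma~\ref{lemC} in the forward direction: it is precisely the identity $\tilde f(X)=A$ together with the perfectness of $f$ on $X$ that forces $\tilde f^{-1}(A)=X$ rather than a strictly larger set, and this is what makes the zero-set scheme for $A$ pull back onto $X$ exactly. The remaining ingredients are routine, and the fact that we can avoid monotonizing the Souslin schemes keeps the argument short.
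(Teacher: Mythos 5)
Your proof is correct. Note that the paper offers no argument for Lemma \ref{lem319} at all --- it is quoted from Rogers and Jayne \cite{RJ} --- so there is no internal proof to compare with; what you have produced is a self-contained derivation of the cited characterization from the paper's own toolkit. Both directions check out. In the forward direction, your application of Lemma \ref{lemC} to $\tilde f\colon\beta X\to\tilde f(\beta X)$ uses the same reading of ``the restriction is perfect'' (perfect onto its image) that the paper itself relies on in the proof of Lemma \ref{lemD}, and it is indeed the crux: it yields $\tilde f^{-1}(A)=X$, so the Souslin scheme of closed (hence, by metrizability of the Hilbert cube, zero-) sets representing the analytic set $A$ pulls back to a zero-set scheme for $X$ in $\beta X$. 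In the converse, the diagonal map $g$ into $[0,1]^{\omega^{<\omega}}$, the passage to $\mathcal{B}=\mathcal{A}\cap g(\beta X)$ (analytic, being the intersection of an analytic subset of a Polish space with a compact set), and Lemma \ref{lemB} do exactly what is needed; for completeness one should add that $g|X$ maps \emph{onto} $\mathcal{B}$, which follows since $g^{-1}(\mathcal{B})=X$ and $\mathcal{B}\subseteq g(\beta X)$, so $X$ really is a perfect pre-image, not merely the domain of a perfect map into, an analytic subspace of $\mathbb{R}^\omega$. The only ingredients imported from outside the paper are the classical equivalence, in a Polish space, of ``analytic'' with ``Souslin over the closed sets,'' the fact that closed sets in metrizable spaces are zero-sets, and the Stone extension of $f$ to $\beta X$; all are standard, so your argument stands as a legitimate alternative to simply citing \cite{RJ}.
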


\begin{thm}
	A space is proper K-analytic if and only if it is a K-analytic p-space.
\end{thm}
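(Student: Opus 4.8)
The plan is to prove both implications via the structural characterizations already assembled in this section. For the forward direction, suppose $X$ is proper $K$-analytic. By the definition, $X$ is the perfect pre-image of an analytic subspace of $\mathbb{R}^\omega$; analytic metrizable spaces are $K$-analytic, and (as noted just above, "Proper K-analytic spaces are p-spaces") $X$ is a p-space. It remains to see $X$ is $K$-analytic, but perfect pre-images of metrizable $K$-analytic spaces are $K$-analytic --- this is exactly the Rogers--Jayne fact quoted in the proof of Lemma \ref{lemA} (\cite[5.8.9]{RJ}). So $X$ is a $K$-analytic p-space.

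For the converse, suppose $X$ is a $K$-analytic p-space. Being a p-space (in the Lindel\"of setting forced by $K$-analyticity, which gives Lindel\"of) means $X$ is a Lindel\"of p-space, hence by the quoted characterization $X$ is a perfect pre-image of a separable metrizable space $M$, say via a perfect map $p:X\to M$. The task is to show $M$ is analytic, for then $X$ is by definition proper $K$-analytic (realizing $M$ inside $\mathbb{R}^\omega$). Since $p$ is perfect and $X$ is $K$-analytic, $M$ --- as a continuous image of $X$ --- is $K$-analytic; and $K$-analytic metrizable spaces are analytic by \cite[5.5.1]{RJ}, again as used in Lemma \ref{lemA}. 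Hence $M$ is analytic and $X$ is proper $K$-analytic.

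An alternative, perhaps cleaner route for the converse uses Lemma \ref{lem319}: a $K$-analytic p-space $X$ has Lindel\"of remainder (it has countable type by Lemma \ref{lem5}, since p-spaces are of countable type), and one then checks $X\in\mathbf{S}(\mathfrak{Z}(\beta X))$. But $\beta X\setminus X$ is $K$-analytic whenever $X$ is Lindel\"of $\bsigma$ with $s$-space remainder; combining with Lemma \ref{lem314} (closure of $K$-analytic sets under the Souslin operation and countable intersections/unions) one can represent $X$ as a Souslin-$\mathfrak{Z}(\beta X)$ set. I would prefer the first route since it only invokes results already cited in the excerpt without re-deriving the Souslin representation.

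The main obstacle is the converse direction, specifically verifying that the metrizable perfect image $M$ of a $K$-analytic space is analytic --- i.e. that $K$-analyticity is inherited by continuous (indeed perfect) images into the metrizable setting and there coincides with analyticity. Both halves of this are Rogers--Jayne results already invoked earlier (in Lemma \ref{lemA}: continuous images of $K$-analytic spaces are $K$-analytic \cite[2.5.5 type argument]{RJ}, and $K$-analytic metrizable $\Rightarrow$ analytic \cite[5.5.1]{RJ}), so the real work is simply assembling the right quotations; there is no genuinely new combinatorics. One should take a little care that "p-space" here is being used in the Lindel\"of sense --- but $K$-analytic spaces are Lindel\"of, so a $K$-analytic p-space is automatically a Lindel\"of p-space, which is what licenses the application of the perfect-pre-image-of-metrizable characterization.
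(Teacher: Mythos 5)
Your proof is correct, and the converse direction is exactly the paper's argument: $K$-analyticity gives Lindel\"of, so a $K$-analytic p-space is Lindel\"of p, hence maps perfectly onto a separable metrizable $M$; $M$ is a continuous image of a $K$-analytic space, hence $K$-analytic, hence analytic by \cite[5.5.1]{RJ}, and it embeds in $\mathbb{R}^\omega$. Where you genuinely diverge is the forward direction's claim that a proper $K$-analytic space is $K$-analytic: the paper obtains this from Lemma \ref{lem319} together with Lemma \ref{lem314} (a proper $K$-analytic space is a Souslin-$\mathfrak{Z}(\beta X)$ set, zero-sets of $\beta X$ are compact and hence $K$-analytic, and the $K$-analytic family is closed under the Souslin operation), whereas you instead quote \cite[5.8.9]{RJ} (perfect pre-images of metrizable analytic spaces are $K$-analytic), the same fact already used in Lemma \ref{lemA}. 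Both routes are legitimate citations of Rogers--Jayne; yours is arguably more economical since it recycles Lemma \ref{lemA} and avoids the Souslin-operation machinery, while the paper's route has the side benefit of exercising the representation in Lemma \ref{lem319}, which it needs elsewhere (e.g.\ for absolute Baire sets). Two small remarks: your parenthetical citation ``\cite[2.5.5 type argument]{RJ}'' for ``continuous images of $K$-analytic spaces are $K$-analytic'' is unnecessary --- this is immediate from the paper's definition of $K$-analytic as a continuous image of a Lindel\"of \v{C}ech-complete space; and if the perfect map onto $M$ is not assumed surjective, just replace $M$ by the (closed) image $p(X)$, which changes nothing.
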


\begin{proof}
	By definition, a proper K-analytic space is a p-space. By \ref{lem319} and \ref{lem314} it is K-analytic. Conversely, if $X$ is a K-analytic p-space, it maps perfectly onto a separable metrizable analytic space, which embeds into $\mathbb{R}^\omega$.
\end{proof}

Note that zero-sets are closed $G_\delta$'s, so that the \textit{absolute Baire sets}, i.e. the elements of the $\sigma$-algebra generated by the zero-sets, are both Lindel\"of Borelian of the first type and absolute Borel. 

\begin{cor}
	Menger absolute Baire spaces are $\sigma$-compact.
\end{cor}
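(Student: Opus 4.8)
The plan is to prove that a Menger absolute Baire space $X$ is in fact proper $K$-analytic; once this is established, the desired $\sigma$-compactness is immediate from the theorem (proved above) that Menger proper $K$-analytic spaces are $\sigma$-compact. The starting input is exactly the observation recorded just before the Corollary: since each zero-set of $\beta X$ is at once a closed set and a $G_\delta$, the $\sigma$-algebra generated by the zero-sets of $\beta X$ is contained both in the $\sigma$-algebra generated by the closed sets and in the $\sigma$-algebra generated by the open sets of $\beta X$. Hence $X$ is simultaneously absolute Borel and Borelian of the first type.

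From here I would run the $p$-space route. First, from absolute Borel and Corollary \ref{cor313}, $X$ is $K$-analytic, and therefore Lindel\"of. Second, since $X$ is Borelian of the first type, Arhangel'ski\u\i's result quoted above that Borelian sets of the first type are s-spaces shows $X$ is an s-space; being also absolute Borel, Theorem \ref{lem39} gives that $X$ is a Lindel\"of $p$-space. Combining, $X$ is a $K$-analytic $p$-space, which by the characterization theorem (proper $K$-analytic $\Leftrightarrow$ $K$-analytic $p$-space) is exactly a proper $K$-analytic space. Adding the Menger hypothesis and invoking ``Menger proper $K$-analytic spaces are $\sigma$-compact'' completes the argument.

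A more self-contained alternative bypasses the $p$-space detour and uses Lemma \ref{lem319}, according to which $X$ is proper $K$-analytic iff $X\in\mathbf{S}(\mathfrak{Z}(\beta X))$. One checks that $\mathbf{S}(\mathfrak{Z}(\beta X))$ contains every zero-set of $\beta X$ and every cozero-set of $\beta X$ (a cozero-set $\beta X\setminus Z(f)=\bigcup_n\{\,|f|\ge 1/n\,\}$ is a countable union of zero-sets, which is directly of Souslin form over $\mathfrak{Z}(\beta X)$), and, by idempotence of the Souslin operation combined with the second clause of Lemma \ref{lem314}, that $\mathbf{S}(\mathfrak{Z}(\beta X))$ is closed under countable unions and countable intersections. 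Then $\{A\subseteq\beta X : A\text{ and }\beta X\setminus A\text{ both lie in }\mathbf{S}(\mathfrak{Z}(\beta X))\}$ is a $\sigma$-algebra containing all zero-sets, hence contains $X$; thus $X$ is proper $K$-analytic and one finishes as before.

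I do not expect a genuine obstacle: in both routes the work is essentially bookkeeping about which cited ingredients apply and in what generality. In the first route the point to verify is that ``Borelian of the first type $\Rightarrow$ s-space'' and Theorem \ref{lem39} carry no hidden separability or metrizability requirement, and that the equivalence ``proper $K$-analytic $\Leftrightarrow$ $K$-analytic $p$-space'' is exactly what was established above. In the second route the only thing invoked beyond the stated lemmas is idempotence of the Souslin operation, which is standard (it is what makes the second clause of Lemma \ref{lem314} applicable to $\mathbf{S}(\mathfrak{Z}(\beta X))$) though not isolated as a numbered statement here. I would present the first route as the main proof and relegate the second to a remark.
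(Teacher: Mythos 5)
Your main ($p$-space) route is correct and is essentially the argument the paper leaves implicit in the note preceding the corollary: absolute Baire $\Rightarrow$ absolute Borel and Borelian of the first type $\Rightarrow$ ($K$-analytic by Corollary \ref{cor313}, s-space by Arhangel'ski\u\i, Lindel\"of $p$ by Theorem \ref{lem39}) $\Rightarrow$ proper $K$-analytic, then apply the theorem that Menger proper $K$-analytic spaces are $\sigma$-compact. Your alternative route via Lemma \ref{lem319} is also sound, needing only the (standard, uncited) idempotence of the Souslin operation, as you note.
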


Mixing Rogers and Jayne with {Arhangel'ski\u\i}, we have:

\begin{thm}\label{thm318}
	The following are equivalent:
	\begin{itemize}
		\item[(a)] $X$ is proper K-Lusin,
		\item[(b)] $X$ and its remainder are K-Lusin,
		\item[(c)] $X$ and its remainder are both Frol\'ik,
		\item[(d)] $X$ is Lindel\"of Borelian of the first type,
		\item[(e)] $X$ is absolute Borel and Lindel\"of p,
		\item[(f)] $X$ is absolute Borel and of countable type.
	\end{itemize}
\end{thm}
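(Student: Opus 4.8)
The plan is to prove Theorem \ref{thm318} by establishing a cycle (or web) of implications among (a)--(f), using the structural results already assembled in Section 3. Several of the equivalences are essentially recorded facts that I would cite and chain together, while the substance lies in connecting the ``Borelian of the first type / s-space'' side to the ``absolute Borel / countable type'' side.

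First I would dispose of the easy links. The equivalence (a) $\Leftrightarrow$ (b) is exactly the Rogers--Jayne fact quoted just before the theorem (proper K-Lusin iff it and its remainder are K-Lusin). The equivalence (b) $\Leftrightarrow$ (c): K-Lusin spaces are Frol\'ik by \cite[5.8.6]{RJ}, and conversely Frol\'ik spaces are absolute $K_{\sigma\delta}$, hence (being closed subspaces of countable products of $\sigma$-compact, in particular K-analytic, spaces, and using Lemma \ref{lem314}) K-analytic; then ``proper K-Lusin iff it and its remainder are K-analytic'' (the other Rogers--Jayne fact quoted) applied to $X$, together with the Frol\'ik hypothesis on $X$, upgrades K-analytic back to K-Lusin. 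So I would route (c) $\Rightarrow$ (a) through ``$X$, remainder both Frol\'ik $\Rightarrow$ both K-analytic $\Rightarrow$ $X$ proper K-analytic; plus $X$ Frol\'ik $\Rightarrow$ $X$ K-Lusin $\Rightarrow$ $X$ proper K-Lusin.'' For (d) $\Rightarrow$ (e): a Lindel\"of Borelian space of the first type is absolute Borel trivially (the $\sigma$-algebra generated by the open sets of $\beta X$ is contained in the one generated by the closed sets), it is K-analytic by Theorem \ref{thm314}, and Arhangel'ski\u\i\ proved Borelian-of-the-first-type spaces are s-spaces; a K-analytic space is Lindel\"of $\bsigma$ (countable network), so by Lemma \ref{lem37} a K-analytic s-space is Lindel\"of p. For (e) $\Rightarrow$ (f): Lindel\"of p spaces are of countable type since they are perfect pre-images of (Lindel\"of, hence) separable metrizable spaces, whose remainders are Lindel\"of, and perfect maps preserve Lindel\"of remainders via Lemma \ref{lem5} (alternatively cite that Lindel\"of p-spaces have Lindel\"of remainder directly from the p-space remainder lemma). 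And (e) $\Rightarrow$ (d) is exactly Theorem \ref{lem39} read backwards — wait, that direction needs care, so see below.

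The heart of the argument is closing the loop back to (a)/(c) from the absolute-Borel side, i.e.\ (f) $\Rightarrow$ (a) (or (f) $\Rightarrow$ (c)), and relating (d) and (e). For (f) $\Rightarrow$ (e): if $X$ is absolute Borel and of countable type, then by Corollary \ref{cor313} $X$ is K-analytic, and ``countable type'' means by Lemma \ref{lem5} that its remainder is Lindel\"of; but the remainder $\beta X\setminus X$ is then also absolute Borel (it is in the $\sigma$-algebra generated by the closed sets of $\beta X$, since $X$ is) and Lindel\"of, hence K-analytic, and moreover — this is the step I expect to be the main obstacle — I must produce that $X$ is a \emph{p}-space, equivalently an s-space (then invoke Lemma \ref{lem37} as in the previous paragraph). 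The route I would take: an absolute Borel space is an absolute Baire-type member of the $\sigma$-algebra generated by zero-sets of $\beta X$ only after a normality/perfectness reduction, so instead I would argue that $X$ and its remainder are both absolute Borel and Lindel\"of, hence both K-analytic, hence $X$ is proper K-analytic by the Rogers--Jayne characterization (proper K-analytic iff it and its remainder are K-analytic), and proper K-analytic spaces are p-spaces — so $X$ is absolute Borel and Lindel\"of p, giving (e). Then (e) $\Rightarrow$ (a): $X$ absolute Borel and Lindel\"of p; its remainder is absolute Borel (as above) and, being the remainder of a Lindel\"of p-space, is itself Lindel\"of p, in particular Lindel\"of, hence K-analytic; and an absolute Borel Lindel\"of p-space is absolute $G_{\delta\sigma}$? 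Not needed — rather, since $X$ is absolute Borel it is Frol\'ik-adjacent: here I would instead finish via (e) $\Rightarrow$ (c) by showing an absolute Borel Lindel\"of p-space is Frol\'ik. That in turn: absolute Borel $+$ Lindel\"of p $\Rightarrow$ by Theorem \ref{lem39}'s proof technique (inducting on Borel order, since Lindel\"of p is preserved under countable unions and intersections within a fixed $\beta X$, by the disjoint-sum and diagonal arguments there) one gets the space is absolute $K_{\sigma\delta}$, i.e.\ Frol\'ik; apply the same to the (absolute Borel, Lindel\"of p) remainder. This yields (c), and we have closed the cycle (a)$\to$(b)$\to$(c)$\to$(a), (a)$\to$(e), (e)$\to$(f)$\to$(e), and separately (d)$\leftrightarrow$(e) via Theorems \ref{thm314}, \ref{lem39} and the s-space fact.

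So the implication graph I would actually write down is: (a) $\Leftrightarrow$ (b) and (b) $\Leftrightarrow$ (c) from the two quoted Rogers--Jayne facts plus ``Frol\'ik $\Rightarrow$ K-analytic''; (a) $\Rightarrow$ (e) since proper K-Lusin $\Rightarrow$ absolute Borel (K-Lusin spaces are Frol\'ik, hence absolute Borel) and proper K-Lusin $\Rightarrow$ Lindel\"of p (perfect pre-image of separable metrizable); (e) $\Rightarrow$ (f) since Lindel\"of p $\Rightarrow$ countable type; (f) $\Rightarrow$ (a) via the two-sided K-analytic characterization of proper K-analytic applied to $X$ and its Lindel\"of remainder, upgraded to K-Lusin using that absolute Borel spaces that are Lindel\"of p are Frol\'ik (Theorem \ref{lem39}-style induction); and finally (d) $\Leftrightarrow$ (e): (d) $\Rightarrow$ (e) by Theorem \ref{thm314} ($+$ the s-space property of Borelian-of-the-first-type spaces $+$ Lemma \ref{lem37}), and (e) $\Rightarrow$ (d) by running the induction of Theorem \ref{lem39} in the form that shows an absolute Borel Lindel\"of p-space lies in the $\sigma$-algebra generated by the \emph{open} sets of $\beta X$ (using that $\sigma$-compact subspaces of $\beta X$ are $F_\sigma$, hence in that algebra, and that countable unions/intersections of such, within Lindel\"of p, stay there). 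The main obstacle, as flagged, is verifying that ``absolute Borel $+$ Lindel\"of p'' forces Frol\'ik / Borelian-of-the-first-type membership — i.e.\ the careful induction on Borel order keeping track simultaneously of the p-space structure and the position in the zero-set $\sigma$-algebra versus the open-set $\sigma$-algebra — and I would handle it by lifting verbatim the disjoint-sum and diagonal constructions from the proof of Theorem \ref{lem39}, noting that each preserves both the p-space property and membership in the relevant $\sigma$-algebra of $\beta X$.
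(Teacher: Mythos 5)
Your skeleton overlaps the paper's in several places ((a)$\Leftrightarrow$(b)$\Leftrightarrow$(c) from the quoted Rogers--Jayne facts, (d)$\Rightarrow$(e) via Theorem \ref{thm314}, the s-space property of Borelian-of-the-first-type spaces and Lemma \ref{lem37}, and (e)$\Rightarrow$(f) via countable type), but the way you close the loop has a genuine gap. You misquote the key Rogers--Jayne fact: what the paper states (and needs) is that $X$ is proper K-\emph{Lusin} if and only if $X$ and its remainder are both K-analytic; you cite it as ``proper K-\emph{analytic} iff $X$ and its remainder are K-analytic.'' With the correct statement, the moment you know $X$ and $\beta X\setminus X$ are both K-analytic you are already at (a), and (f)$\Rightarrow$(b), (e)$\Rightarrow$(b), (d)$\Rightarrow$(b) all close exactly as in the paper. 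With your version you only reach ``proper K-analytic'' and are forced to invent an upgrade to K-Lusin/Frol\'ik, namely the claim ``absolute Borel $+$ Lindel\"of p $\Rightarrow$ Frol\'ik (absolute $K_{\sigma\delta}$), by lifting the disjoint-sum and diagonal constructions from Theorem \ref{lem39}.'' That is not a proof: those constructions show that countable unions and intersections of Lindel\"of p-spaces are Lindel\"of $\bsigma$ (and Lindel\"of p once the s-space hypothesis is added), i.e.\ they propagate a covering-type property up the induction; they say nothing about collapsing an absolute Borel set of arbitrary Borel order down to the absolute $K_{\sigma\delta}$ level of the hierarchy over $\beta X$. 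That collapse is exactly the hard content of the Rogers--Jayne theorems (K-Lusin spaces are absolute $K_{\sigma\delta}$; the two-sided K-analytic characterization of proper K-Lusin) which the paper simply invokes, so as written your (f)$\Rightarrow$(a) and (e)$\Rightarrow$(c) links rest on an unproved assertion.

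A second, related problem is your handling of complements, which sinks your (e)$\Rightarrow$(d) step. The classes ``absolute Borel'' and ``Borelian of the first type'' must be read as generated from the closed (resp.\ open) subsets of $\beta X$ under countable unions and intersections only; if complementation were allowed the two classes would coincide and the theorem would be false (Okunev's space is Lindel\"of absolute $F_{\sigma\delta}$ but not of countable type). Under that reading, the remainder of an absolute Borel space is Borelian of the first type (De Morgan), not ``also absolute Borel'' as you assert; this particular slip is harmless, since the remainder is Lindel\"of by Lemma \ref{lem5} and then K-analytic by Theorem \ref{thm314}, which is exactly the paper's route. But your (e)$\Rightarrow$(d) argument is not repairable in the same way: $\sigma$-compact subsets of $\beta X$ are $F_\sigma$, and closed subsets of $\beta X$ need not be $G_\delta$, so $F_\sigma$ sets are not in the open-generated class and your proposed induction never places $X$ in it. The paper instead obtains (d) from (c) for free: proper K-Lusin spaces are absolute $G_{\delta\sigma}$, i.e.\ countable unions of \v{C}ech-complete subspaces, hence Lindel\"of Borelian of the first type. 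Your (a)$\Rightarrow$(e) (perfect pre-image of a separable metrizable Lusin space is Lindel\"of p, plus Frol\'ik $\Rightarrow$ absolute Borel) and (d)$\Rightarrow$(e) directions are correct, and the first is in fact a slightly more direct route than the paper's s-space argument for (b)$\Rightarrow$(e).
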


\begin{proof}
	We have already proved that (a), (b) and (c) are equivalent. (c) implies (d), since $X$ is Lindel\"of absolute $G_{\delta\sigma}$. If $X$ is Lindel\"of Borelian of the first type, it is K-analytic, but so is its remainder, so (d) implies (b). If $X$ is absolute Borel, it is K-analytic and its remainder is Borelian of the first type. If $X$ is Lindel\"of p, so is its remainder, so (e) implies (b). (b) implies a proper K-Lusin space and its remainder are both K-analytic spaces, hence Lindel\"of $\bsigma$ spaces, so they are p-spaces. Thus (b) implies (e). (e) implies (f) since p-spaces are of countable type \cite{Ar1}. (f) implies the remainder of $X$ is Lindel\"of Borelian of the first type, and so is K-analytic. Then since $X$ is K-analytic, (f) implies (b).
\end{proof}

We know that Menger proper K-analytic (a fortiori, proper K-Lusin) spaces are $\sigma$-compact, but Menger K-analytic spaces may not be.

\begin{prob}
	Are Menger K-Lusin spaces $\sigma$-compact?
\end{prob}

An interesting fact about K-Lusin spaces is that:

\begin{lem}[{ \cite[5.4.3]{RJ}}]\label{lem322}
	The following are equivalent for a K-Lusin $X$:
	\begin{itemize}
		\item[(a)] $X$ includes a compact perfect set;
		\item[(b)] $X$ admits a continuous real-valued function with uncountable range;
		\item[(c)] $X$ is not the countable union of compact subspaces which include no perfect subsets. In particular, if $X$ is not $\sigma$-compact, it includes a compact perfect set.
	\end{itemize}
\end{lem}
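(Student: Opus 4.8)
The plan is to establish the cycle (a)$\Rightarrow$(b)$\Rightarrow$(c)$\Rightarrow$(a), with only the last implication using the hypothesis that $X$ is $K$-Lusin; the ``in particular'' clause then follows trivially. Throughout I will use the Cantor--Bendixson dictionary: a compact Hausdorff space includes a nonempty perfect (equivalently, nonempty dense-in-itself) subspace if and only if it is not scattered --- the perfect kernel $\bigcap_\xi K^{(\xi)}$, a nonempty closed hence compact dense-in-itself subspace, witnesses one direction. I will also use the Pe\l czy\'nski--Semadeni theorem that a compact Hausdorff space is scattered if and only if it admits no continuous surjection onto $[0,1]$; in particular continuous images of scattered compacta are scattered.

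For (a)$\Rightarrow$(b): if $P\subseteq X$ is a compact perfect set, then $P$ is a non-scattered compactum, hence maps continuously onto $[0,1]$; since $X$ is completely regular, $P$ is $C^*$-embedded in $X$ (extend across $\beta X$, which is normal), so this surjection extends to a continuous $f\colon X\to[0,1]$ of uncountable range. For (b)$\Rightarrow$(c): suppose $f\colon X\to\mathbb{R}$ is continuous with uncountable range, but toward a contradiction $X=\bigcup_{n<\omega}K_n$ with each $K_n$ compact and including no perfect subspace, i.e.\ each $K_n$ scattered. Then each $f(K_n)$ is a compact metrizable continuous image of a scattered compactum, hence scattered, hence countable (an uncountable separable metrizable space has a nonempty dense-in-itself set of condensation points). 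So $f(X)=\bigcup_n f(K_n)$ is countable --- a contradiction.

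The implication (c)$\Rightarrow$(a) is where the work lies. Fix, by definition of $K$-Lusin, a continuous bijection $g\colon Z\to X$ with $Z$ Lindel\"of \v{C}ech-complete. The restriction of $g$ to any compact subspace $C\subseteq Z$ is a continuous bijection onto a Hausdorff space, hence a homeomorphism, so $g(C)\cong C$; consequently, if $Z$ were a countable union of scattered compacta then so would $X$ be, contradicting (c), so $Z$ itself is not such a union, and it suffices to find a compact perfect subspace of $Z$. Now $Z$, being Lindel\"of \v{C}ech-complete and hence paracompact \v{C}ech-complete, is a perfect preimage of a Polish space $P_0$, say under $h\colon Z\to P_0$. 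If $P_0$ contains a nonempty compact perfect set $P'$, then $h^{-1}(P')$ is compact (the preimage of a compactum under a perfect map), maps onto the non-scattered $P'$, and is therefore non-scattered, hence contains a compact perfect subspace of $Z$. Otherwise $P_0$ has no compact perfect subset, so --- an uncountable Polish space contains a Cantor set --- $P_0$ is countable, hence $\sigma$-compact, hence $Z$ is $\sigma$-compact as a perfect preimage of a $\sigma$-compact space; writing $Z=\bigcup_n C_n$ with each $C_n$ compact, not every $C_n$ can be scattered, so some $C_n$ is non-scattered and contains a compact perfect subspace of $Z$. In all cases we obtain the required set, and $g$ carries it to a compact perfect subspace of $X$.

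Finally, if $X$ is not $\sigma$-compact it is a fortiori not a countable union of compacta of any kind, so (c) holds and hence $X$ includes a compact perfect set. The main obstacle is the last implication, and within it the essential use of \v{C}ech-completeness is the passage to a Polish perfect quotient $P_0$, where the classical perfect-set theorem applies. It is precisely the injectivity in the definition of $K$-Lusin --- absent for merely $K$-analytic spaces --- that is used, via the homeomorphism $g|_C$, to transport a compact perfect set from $Z$ down to $X$; without it one cannot prevent a perfect set from collapsing onto a scattered one, consistent with the fact, noted above, that Menger $K$-analytic spaces need not be $\sigma$-compact.
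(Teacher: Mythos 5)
Your proof is correct. Note, though, that the paper offers no argument of its own here: Lemma \ref{lem322} is imported verbatim from Rogers--Jayne \cite[5.4.3]{RJ}, so there is no in-paper proof to compare against; what you have produced is a self-contained derivation. Your route stays inside the toolkit the paper itself uses elsewhere: (a)$\Rightarrow$(b)$\Rightarrow$(c) is the standard scatteredness dictionary (non-scattered compacta map onto $[0,1]$, continuous images of scattered compacta are scattered, scattered compact metrizable spaces are countable, compact sets are $C^*$-embedded via $\beta X$), and the real content, (c)$\Rightarrow$(a), uses the definition of K-Lusin as a continuous bijection $g\colon Z\to X$ from a Lindel\"of \v{C}ech-complete $Z$, Frol\'ik's theorem that such a $Z$ admits a perfect map onto a Polish space, and the classical perfect set theorem there; injectivity of $g$ is exactly what lets you pull a compact perfect set back up, since $g$ restricted to any compactum is a homeomorphism. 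This is presumably more elementary and more topological than the treatment in \cite{RJ}, which works through the K-analytic/Souslin-operation machinery. Two cosmetic remarks: your Case 1 of (c)$\Rightarrow$(a) never uses hypothesis (c) (it shows outright that a compact perfect set downstairs lifts to one in $Z$), so the two cases could be merged into the single statement that a Lindel\"of \v{C}ech-complete space which is not a countable union of scattered compacta contains a compact perfect set; and when invoking Tietze on $\beta X$ you should compose with a retraction of $\mathbb{R}$ onto $[0,1]$ (or use the $[0,1]$-valued form of Tietze) --- harmless, since all you need is uncountable range.
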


From this, we can conclude that Okunev's space is not K-Lusin, since it is not $\sigma$-compact but doesn't include a compact perfect set.

Indeed we have:

\begin{defn}
	A space is \textbf{Rothberger} if whenever $\{\mathcal{U}_n\}_{n<\omega}$ are open covers, there exists a cover $\{{U}_n\}_{n<\omega},U_n\in\mathcal{U}_n$.
\end{defn}

Thus \textit{Rothberger} is a strengthening of \textit{Menger}.

\begin{lem}[{ \cite{AA}}]
	Rothberger spaces do not include a compact perfect set.
\end{lem}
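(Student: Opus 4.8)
The plan is to reduce this to a question about compact perfect sets. Recall that a compact perfect set in a completely regular space is, in particular, an uncountable compact metrizable space (being a perfect set, it carries no isolated points, and any compact perfect Hausdorff space of countable weight contains a copy of the Cantor set); more to the point, if $K\subseteq X$ is a nonempty compact set with no isolated points, then by a classical argument one produces a continuous surjection of $K$ onto $[0,1]$ (a Cantor scheme argument: recursively split $K$ into two disjoint nonempty closed pieces, which is possible because $K$ has no isolated points, and use the resulting branching to define the map onto the Cantor set, then onto $[0,1]$). The key observation linking this to the Rothberger property is that $[0,1]$, and indeed any uncountable compact metrizable space such as the Cantor set $\mathbb{K}$, is \emph{not} Rothberger — this is the classical fact that the Rothberger property implies strong measure zero, and compact metrizable sets of positive ``size'' (in fact the Cantor set) fail to be strong measure zero.

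First I would argue that a compact perfect set $K\subseteq X$ contains a copy of the Cantor set $\mathbb{K}$. Since $K$ is compact Hausdorff with no isolated points, a standard Cantor-scheme recursion yields an embedding of $\mathbb{K}$ into $K$ (pick at each finite binary string $s$ a nonempty closed set $F_s$ with $F_{s0},F_{s1}\subseteq F_s$ disjoint and $\op{diam}$-analogue shrinking replaced by: refine so that distinct branches eventually separate, using complete regularity to find disjoint closed neighborhoods; since $K$ is compact the intersections along branches are nonempty and the resulting map is a homeomorphism onto its image). Second, I would invoke that a closed subset of a Rothberger space is Rothberger (the Rothberger property is inherited by closed subspaces, just as Menger is), so if $X$ were Rothberger then $K$, and hence the copy of $\mathbb{K}$ inside it, would be Rothberger. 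Third, I would cite or reprove the classical fact that $\mathbb{K}$ is not Rothberger: a Rothberger metrizable space has strong measure zero, but $\mathbb{K}$ with its usual metric does not have strong measure zero (covering by $2^{-n}$-balls cannot be done with one ball per stage). This contradiction shows no compact perfect set can sit inside a Rothberger space.

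I expect the main obstacle to be bookkeeping rather than conceptual: one must be careful that ``perfect set'' here means a nonempty closed set with no isolated points (as opposed to ``perfect map''), and that complete regularity is genuinely needed to run the Cantor scheme inside an arbitrary compact Hausdorff $K$ — though in fact $K$ compact Hausdorff suffices since compact Hausdorff spaces are normal. The only real content is the non-Rothbergerness of $\mathbb{K}$; everything else is the observation that Rothberger passes to closed subspaces and the routine embedding of the Cantor set into a compact space without isolated points. If one wants to avoid measure theory entirely, one can instead directly exhibit a sequence of finite open covers of $\mathbb{K}$ witnessing failure of Rothberger: at stage $n$ cover $\mathbb{K}\subseteq 2^\omega$ by the $2^{n}$ basic clopen sets determined by the first $n$ coordinates, and note that choosing one set per stage can cover at most countably many of the continuum-many points, or more carefully, a diagonalization shows the union of any such selection omits a point.
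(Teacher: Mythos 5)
The paper offers no proof of this lemma at all --- it is quoted from \cite{AA} --- so your argument must stand on its own. Its overall strategy (reduce to the fact that the Cantor set is not Rothberger, using that the Rothberger property is inherited by closed subspaces) is the standard one, and your concluding argument that $2^\omega$ is not Rothberger is fine, modulo bookkeeping: at stage $n$ you should take the clopen sets determined by the first $n+1$ coordinates (the cover by sets determined by the first $0$ coordinates is $\{2^\omega\}$, and a single selection from it already covers), and the remark that one set per stage covers ``at most countably many points'' is wrong (each basic clopen set has size continuum); the diagonalization $y(n)\neq s_n(n)$ is the correct argument.

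The genuine gap is in your first step. In this paper spaces are completely regular but not assumed metrizable, so the compact perfect set $K$ need not be metrizable, and a compact Hausdorff space with no isolated points need \emph{not} contain a copy of the Cantor set: $\beta\mathbb{N}\setminus\mathbb{N}$ is compact and dense-in-itself but has no nontrivial convergent sequences, hence no subspace homeomorphic to $2^\omega$. The Cantor scheme you describe cannot be made to produce an embedding, because without a metric the sets along a branch need not shrink to a point; what it does produce is a closed set $C=\bigcap_n\bigcup_{|s|=n}F_s\subseteq K$ together with a continuous surjection $C\to 2^\omega$ (each $x\in C$ lies in exactly one $F_s$ per level, and preimages of basic clopen sets are relatively clopen). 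The repair is the variant you mention only in passing: use this surjection (or the induced surjection onto $[0,1]$) and the fact that the Rothberger property is preserved by continuous images (pull the covers back), in addition to closed-hereditarity. Then if $X$ were Rothberger, so would be $K$, hence $C$, hence $2^\omega$, contradicting your diagonalization. With that substitution the proof is correct; as written, the embedding claim is false for the non-metrizable compacta the lemma is actually applied to (K-analytic and K-Lusin spaces).
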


\begin{thm}
	K-analytic Rothberger spaces are projectively countable.
\end{thm}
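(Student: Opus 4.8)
The goal is to show that a $K$-analytic Rothberger space $X$ is projectively countable, i.e.\ every continuous image of $X$ in a separable metrizable space is countable. So fix such a space $Y=f(X)\subseteq M$ with $M$ separable metrizable and $f$ continuous. The key observation is that the Souslin-operation closure (Lemma \ref{lem314}) and the fact that $K$-analytic separable metrizable spaces are analytic (cited in the excerpt, from \cite{RJ}) should combine to tell us that $Y$ is analytic. Indeed, $K$-analyticity is preserved under continuous images into completely regular spaces (this is standard and essentially contained in the machinery around Lemma \ref{lem314}), so $Y$ is $K$-analytic, hence, being separable metrizable, analytic.

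Next I would transfer the Rothberger property along $f$. Rothberger is preserved by continuous images: given covers $\{\mc U_n\}$ of $Y$, pull them back to covers of $X$, extract a Rothberger selection $\{U_n\}$ with $U_n=f^{-1}(V_n)$, $V_n\in\mc U_n$, and note $\{V_n\}$ covers $Y$. So $Y$ is an analytic Rothberger (separable metrizable) space. Now I invoke the already-quoted lemma from \cite{AA} that Rothberger spaces contain no compact perfect set, so $Y$ contains no compact perfect set — in particular no copy of the Cantor set. But an analytic subset of a Polish space that contains no copy of the Cantor set is countable: this is the classical perfect-set property for analytic sets (every uncountable analytic set contains a Cantor set). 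Hence $Y$ is countable, which is exactly what projective countability demands.

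\textbf{Main obstacle.} The delicate point is the first step — verifying that continuous images (into separable metrizable, or more generally completely regular, spaces) of $K$-analytic spaces are again $K$-analytic, and then that such an image being separable metrizable forces it to be analytic. The excerpt only explicitly states ``$K$-analytic metrizable spaces are analytic'' and gives the Souslin-closure Lemma \ref{lem314}; I would either cite the relevant result from Rogers–Jayne \cite{RJ} directly (the class of $K$-analytic spaces is closed under continuous images, being continuous images of Lindel\"of \v{C}ech-complete spaces by the very Definition in Section 1, so a continuous image of $X$ is a continuous image of a Lindel\"of \v{C}ech-complete space), or, more cleanly, just observe that by definition $X$ is a continuous image of a Lindel\"of \v{C}ech-complete $Z$, so $Y=f(X)$ is too, hence $K$-analytic; being separable metrizable it is then analytic. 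With that in hand the rest is routine: push Rothberger forward, apply the no-compact-perfect-set lemma, and finish with the perfect set theorem for analytic sets.

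Everything downstream of the ``$Y$ is analytic'' step is standard; no part of it requires any set-theoretic hypothesis, so this theorem is a ZFC result. I would present it in that order: (1) reduce to a separable metrizable continuous image $Y$; (2) observe $Y$ is analytic; (3) observe $Y$ is Rothberger; (4) cite \cite{AA} to rule out a compact perfect subset; (5) apply the perfect set property of analytic sets to conclude $Y$ is countable.
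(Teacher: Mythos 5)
Your argument is correct, and it reaches the conclusion by a genuinely different final step than the paper. The paper's proof is a one-liner: K-analytic Rothberger spaces are in particular Menger K-analytic, hence projectively $\sigma$-compact (via ``K-analytic metrizable spaces are analytic'' and Arhangel'ski\u\i's theorem that Menger analytic spaces are $\sigma$-compact); the implicit finish is that each separable metrizable continuous image is then $\sigma$-compact \emph{and} Rothberger, so by the no-compact-perfect-set lemma its compact pieces are scattered, hence countable by Cantor--Bendixson. You instead note that the image $Y$ is itself K-analytic (being a continuous image of a Lindel\"of \v{C}ech-complete space), hence analytic since it is separable metrizable, push Rothberger forward, rule out a compact perfect subset by the cited lemma, and conclude countability from the classical perfect set property of analytic sets. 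So the reduction to a separable metrizable image and the use of the Rothberger no-perfect-set lemma are common to both; the key lemma differs --- you use Souslin's perfect set theorem where the paper leans on the projectively-$\sigma$-compact machinery it had already built (i.e.\ on the Menger-analytic-implies-$\sigma$-compact theorem). Your route is somewhat more self-contained and avoids that theorem entirely; the paper's route buys brevity by reusing its earlier result. Your observation that the statement is a ZFC theorem is also correct; your step that $K$-analyticity passes to continuous images is exactly as easy as you say, straight from the definition.
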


\begin{proof}
	They are projectively $\sigma$-compact.
\end{proof}

\begin{cor}
	K-Lusin Rothberger spaces are $\sigma$-compact.
\end{cor}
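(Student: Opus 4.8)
The plan is to read this off as a corollary of the preceding theorem together with Lemma~\ref{lem322}; I do not anticipate any real obstacle here, as the work has already been done in those two statements.

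Let $X$ be K-Lusin and Rothberger. Since K-Lusin spaces are K-analytic, the preceding theorem applies and $X$ is projectively countable; in particular every continuous real-valued function on $X$ has countable range, so clause~(b) of Lemma~\ref{lem322} fails for $X$. As $X$ is K-Lusin, clauses (b) and (c) of that lemma are equivalent, so clause~(c) fails as well, which says precisely that $X$ \emph{is} a countable union of compact subspaces (ones including no perfect subsets, though that refinement is not needed). Hence $X$ is $\sigma$-compact.

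Alternatively one can argue directly, bypassing the projective-countability step: a Rothberger space contains no compact perfect subset by the lemma of \cite{AA} quoted just above, so clause~(a) of Lemma~\ref{lem322} fails, hence again (c) fails and $X$ is $\sigma$-compact; this is exactly the contrapositive of the final sentence of Lemma~\ref{lem322}. The only point needing attention in either route is that $X$ be genuinely K-Lusin, so that Lemma~\ref{lem322} is available — this is part of the hypothesis. Note that, in contrast with the $\sigma$-compactness results for the broader Menger classes elsewhere in the paper, no set-theoretic assumption (\textbf{CD}, a Michael space, CH) is required.
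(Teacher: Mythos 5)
Your proof is correct and follows the paper's own route: the paper's proof is simply ``This follows from \ref{lem322}'', with the intended argument being exactly what you spell out (projective countability from the preceding theorem kills clause (b), or the \cite{AA} lemma kills clause (a), and the equivalence with (c) for K-Lusin spaces yields $\sigma$-compactness). Nothing further is needed.
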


\begin{proof}
	This follows from \ref{lem322}.
\end{proof}

\begin{rem}
	Projectively countable Lindel\"of spaces are always Rothberger \cite{T1}; thus Okunev's space is Rothberger \cite{BT}. The assertion that Rothberger spaces are projectively countable is equivalent to \textit{Borel's Conjecture} \cite{T1}.
\end{rem}

Here are some more problems we have not been able to solve:

\begin{prob}
Does \textbf{CD} imply co-analytic Hurewicz spaces are $\sigma$-compact?
\end{prob}

\begin{prob}
Are Lindel\"of co-analytic projectively $\sigma$-compact spaces $\sigma$-compact?
\end{prob}

Note $V=L$ implies there is a co-analytic Hurewicz group of reals that is not $\sigma$-compact \cite{To}.

\nocite{*}
\bibliographystyle{acm}
\bibliography{MengerGroups.bib}

{\rm Franklin D. Tall, Department of Mathematics, University of Toronto, Toronto, Ontario M5S 2E4, CANADA}

{\it e-mail address:} {\rm f.tall@math.utoronto.ca}

\end{document}